\declaretheorem[numberwithin=section, style=plain]{theorem}  
\declaretheorem[sibling=theorem, style=plain]{corollary, lemma, proposition, conjecture}
\declaretheorem[sibling=theorem, style=definition]{definition, example}
\declaretheorem[sibling=theorem, style=remark]{remark, notation}
\declaretheorem[numbered=no, style=remark]{acknowledgements}
\DeclareMathOperator\id{id}
\let\hom\undefined
\DeclareMathOperator\hom{Hom}
\DeclareMathOperator\ext{Ext}
\newcommand\dualab\widehat
\newcommand\sh\mathscr
\newcounter{proofstep}
\xpretocmd{\proof}{\setcounter{proofstep}{0}}{}{}
\newcommand{\proofstep}[1]{%
  \par
  \addvspace{\medskipamount}%
  \stepcounter{proofstep}%
  \noindent\emph{Step \theproofstep: #1}\par\nobreak\smallskip
  \@afterheading
}
\let\save@mathaccent\mathaccent
\newcommand*\if@single[3]{%
  \setbox0\hbox{${\mathaccent"0362{#1}}^H$}%
        \setbox2\hbox{${\mathaccent"0362{\kern0pt#1}}^H$}%
  \ifdim\ht0=\ht2 #3\else #2\fi
  }
\newcommand*\rel@kern[1]{\kern#1\dimexpr\macc@kerna}
\newcommand*\widebar[1]{\@ifnextchar^{{\wide@bar{#1}{0}}}{\wide@bar{#1}{1}}}
\newcommand*\wide@bar[2]{\if@single{#1}{\wide@bar@{#1}{#2}{1}}{\wide@bar@{#1}{#2}{2}}}
\newcommand*\wide@bar@[3]{%
  \begingroup
  \def\mathaccent##1##2{%
    \let\mathaccent\save@mathaccent
    \if#32 \let\macc@nucleus\first@char \fi
    \setbox\z@\hbox{$\macc@style{\macc@nucleus}_{}$}%
    \setbox\tw@\hbox{$\macc@style{\macc@nucleus}{}_{}$}%
    \dimen@\wd\tw@
    \advance\dimen@-\wd\z@
    \divide\dimen@ 3
    \@tempdima\wd\tw@
    \advance\@tempdima-\scriptspace
    \divide\@tempdima 10
    \advance\dimen@-\@tempdima
    \ifdim\dimen@>\z@ \dimen@0pt\fi
    \rel@kern{0.6}\kern-\dimen@
    \if#31
      \overline{\rel@kern{-0.6}\kern\dimen@\macc@nucleus\rel@kern{0.4}\kern\dimen@}%
      \advance\dimen@0.4\dimexpr\macc@kerna
      \let\final@kern#2%
      \ifdim\dimen@<\z@ \let\final@kern1\fi
      \if\final@kern1 \kern-\dimen@\fi
    \else
      \overline{\rel@kern{-0.6}\kern\dimen@#1}%
    \fi
  }%
  \macc@depth\@ne
  \let\math@bgroup\@empty \let\math@egroup\macc@set@skewchar
  \mathsurround\z@ \frozen@everymath{\mathgroup\macc@group\relax}%
  \macc@set@skewchar\relax
  \let\mathaccentV\macc@nested@a
  \if#31
    \macc@nested@a\relax111{#1}%
  \else
    \def\gobble@till@marker##1\endmarker{}%
    \futurelet\first@char\gobble@till@marker#1\endmarker
    \ifcat\noexpand\first@char A\else
      \def\first@char{}%
    \fi
    \macc@nested@a\relax111{\first@char}%
  \fi
  \endgroup
}
\newcommand\Dt{\widetilde{\mathscr{D}}}
\newcommand\D{\mathscr{D}}
\newcommand\Mt{\widetilde{\M}}
\newcommand\M{\mathscr{M}}
\newcommand\Dp[1]{{}_{\mathrm{D}}#1_{\ast}}
\newcommand\Dpc[2]{\Dp{#1}^{(#2)}}
\newcommand\Dpb[1]{{}_{\mathrm{D},\widebar{\mathrm{D}}}#1_{\ast}}
\newcommand\Qt{\widetilde{\Theta}}
\DeclareMathOperator{\Sp}{Sp}
\newcommand\Sptrxy{\Sp_{X\to Y}}
\newcommand\Sptrxyb{\Sp_{X,\widebar{X}\to Y,\widebar{Y}}}
\newcommand\cur{\mathfrak{C}}
\NewDocumentCommand\Tp{m}{{}_{\mathrm{T}}\csname push:nn\endcsname{#1}{_}}
\newcommand\Tpc[2]{\Tp{#1}^{(#2)}}
\newcommand\T{\widetilde{\mathscr{T}}}
\newcommand\Sptrxyinf{\Sptrxy^{\infty}}
\newcommand\Sptrxybinf{\Sptrxyb^{\infty}}
\newcommand\Et{\widetilde{\mathcal{E}}}
\newcommand\Ot{\widetilde{\mathcal{O}}}
\newcommand{\kt}{\widetilde{K}}
\newcommand{\hmq}[2]{\operatorname{HM}_{\mathbb{Q}}(#1,#2)^p}
\newcommand{\hmz}[2]{\operatorname{HM}_{(\mathbb{Z})}(#1,#2)^p}
\address{Department of Mathematics, Humboldt-Universität zu Berlin}
\email{mads.villadsen@hu-berlin.de}
\subjclass[2020]{14D07 (Primary) 32D20, 32C38, 14F10, 14F43 (Secondary)}
\author{Mads Bach Villadsen}
\date{}
\title{Hodge modules and Kähler morphisms}
\begin{document}

\maketitle
\begin{abstract}
We prove the decomposition theorem for Hodge modules with integral structure along proper Kähler morphisms, partially generalizing M. Saito's theorem for projective morphisms. Our proof relies on compactifications of period maps of generically defined variations of Hodge structure, as well as the theorems of Cattani-Kaplan-Schmid on \(L^2\)-cohomology of variations of Hodge structure for the hard Lefschetz theorem.
\end{abstract}
\section{Introduction}
\label{sec:org6877fcc}
One of the main results of M. Saito's theory of Hodge modules is a Hodge-theoretic proof of the decomposition theorem of Beilinson-Bernstein-Deligne-Gabber \autocite{Beilinson1982}, at least for those semisimple perverse sheaves underlying polarizable pure Hodge modules. However, while parts of the theory works just as well on arbitrary complex manifolds as on projective varieties, Saito's proof of the decomposition theorem \autocite{Saito1988} relies heavily on the assumption that the morphism in question be projective. In particular, the argument uses dimensional induction to prove a relative hard Lefschetz theorem by using a version of the Lefschetz hyperplane theorem for holonomic \(\mathscr D\)-modules.

Since the usual hard Lefschetz theorem for singular cohomology still holds in the compact Kähler setting, it's natural to ask whether the decomposition theorem holds along proper relative Kähler morphisms, suitably defined. We use geometric methods, inspired by Saito's original argument, to prove this for polarizable complex Hodge modules which admit an integral structure, in particular for Hodge modules of geometric origin (see \autoref{sec:org3b1eb0c}).
\begin{theorem}
\label{thm:decomp}
Let \(f\colon X\to Y\) be a proper morphism of complex manifolds, and suppose \(l\in H^2(X,\mathbb{R}(1))\) is a relative Kähler class. Then for any \(M\in \hmz X n\) a polarizable pure \(\mathbb{C}\)-Hodge module of weight \(n\) which admits an integral structure, we have the following:
\begin{enumerate}
\item \(\Tp f M\) is strict.
\item \(\Tpc f k M\in \hmz Y {n+k}\)
\item The relative hard Lefschetz theorem holds, i.e. \(l^k\colon \Tpc f {-k}M\to \Tpc f kM(k)\) is an isomorphism for every \(k\ge 0\).
\item If \(M\) is polarized by the sesquilinear pairing \(\mathcal{S}\colon M\otimes \widebar{M}\to \cur_X\), then \(\Tpc f {-k}M\) is polarized by
\begin{align*}
(\alpha,\widebar{\beta})\mapsto \Tpc f {k,-k} \mathcal{S}(l^k\alpha,\widebar{\beta}).
\end{align*}
\end{enumerate}
\end{theorem}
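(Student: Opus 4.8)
The plan is to run Saito's dévissage from the projective case essentially unchanged, observing that the four assertions are proven together and that the only genuinely non-formal ingredient fed into the induction is the relative hard Lefschetz isomorphism (3); strictness (1), the statement that each \(\Tpc f k M\) lies in \(\hmz Y {n+k}\) (2), and the polarization formula (4) then follow from the formal Hodge--Lefschetz package—the \(\mathfrak{sl}_2\)-action generated by \(l\) and its adjoint, the primitive decomposition, and the degeneration of the relevant spectral sequence—exactly as in Saito's argument, since this part never uses projectivity. In the projective setting one supplies (3) by dimensional induction via the Lefschetz hyperplane theorem for holonomic \(\mathscr{D}\)-modules; for a Kähler morphism no such hyperplane sections, nor any bimeromorphic modification to a projective morphism, are available in general, so the task reduces to supplying (3) by an intrinsically analytic route.

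To this end I would first reduce to a single variation of Hodge structure. Using the decomposition of a pure Hodge module into summands with strict support, which holds on an arbitrary complex manifold and requires no projectivity, reduce to the case that \(M\) has strict support an irreducible closed analytic subset \(Z \subseteq X\); on a dense Zariski-open subset of the smooth locus of \(Z\) it is then the intermediate extension of a polarizable variation of Hodge structure \(\mathbb{V}\) carrying the given integral structure. Since a morphism between pure Hodge modules with strict support is an isomorphism as soon as it is one over a dense open subset, to prove that \(l^k\colon \Tpc f {-k} M \to \Tpc f k M(k)\) is an isomorphism it suffices to check this over the generic point of each stratum of the support in \(Y\). Over such a point this reduces, through Saito's description of the nearby stalk, to hard Lefschetz for the intersection cohomology of a general fibre of \(Z \to Y\)—a compact Kähler space—with coefficients in \(\mathbb{V}\), relative to the restriction of \(l\), the lower strata being absorbed into the same dévissage.

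To obtain this fibrewise hard Lefschetz I would compactify the period map of \(\mathbb{V}\). The integral structure forces the local monodromies to be quasi-unipotent and realises \(\mathbb{V}\) through a period map into an arithmetic quotient \(\Gamma\backslash D\), which admits a sufficiently controlled compactification for Schmid's nilpotent-orbit and \(\mathrm{SL}_2\)-orbit asymptotics and the subsequent Cattani--Kaplan--Schmid analysis to apply. Their theorem equips the \(L^2\)-cohomology of \(\mathbb{V}\) on the fibre with a pure Hodge structure, identifies it with the intersection cohomology, and yields both the hard Lefschetz isomorphism for the Kähler class \(l\) and a polarization. Identifying this \(L^2\)-Hodge structure with the stalk of \(\Tpc f k M\), the CKS hard Lefschetz isomorphism becomes the required generic isomorphism \(l^k\), and the CKS polarization produces the pairing in (4) after pushforward of the sesquilinear form \(\mathcal{S}\) to \(\cur_Y\).

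I expect the crux to be the comparison of Hodge-theoretic data across these two descriptions: matching the Hodge filtration furnished by \(L^2\)-harmonic forms with the filtration built into Saito's \(\mathscr{D}\)-module definition of \(\Tpc f k M\), and simultaneously controlling the degeneration of \(\mathbb{V}\) along the boundary of the compactified period map so that the \(L^2\)-cohomology genuinely computes the nearby stalk with the correct weight filtration and polarization. This asymptotic and compatibility analysis is where the integral structure is indispensable—it is what makes the arithmetic compactification and the full CKS machinery available—and is the step on which the whole argument rests.
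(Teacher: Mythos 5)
There is a genuine gap, and it sits exactly where you declare the argument to be formal. You claim that once relative hard Lefschetz (3) is supplied, assertions (1), (2) and (4) follow from the Hodge--Lefschetz package ``exactly as in Saito's argument, since this part never uses projectivity.'' This is not so: in Saito's projective proof the polarization formula (4) is \emph{not} a formal consequence of (3) but is established through the same Lefschetz-pencil induction that proves (3), by factoring the map through \(\mathbb{P}^1\) and combining the polarizations on \(\mathbb{P}^1\) with the relative ones. The paper's central new contribution is precisely to reproduce that geometric factorization in the Kähler setting: when the period map of the generic VHS has infinite monodromy, \autoref{thm:sommese-compactification} and \autoref{cor:VHS-implies-algebraicity} produce a meromorphic map to \(\mathbb{P}^1\) (after an alteration handled by \autoref{lemma:alteration}), and \autoref{lemma:P1-factorization} then derives (4) from (3) by a delicate eigenvalue and sign analysis of the two components \(\eta_0,\eta_{-2}\) of the Kähler class relative to \(f\colon X\to\mathbb{P}^1\); when the period map is constant, the integral structure plus Kronecker and Jordan--Schur force finite monodromy and reduce to classical Hodge theory. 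This is also where the integral structure is actually used --- to manufacture the missing meromorphic functions replacing Lefschetz pencils --- not, as you suggest, to make the Cattani--Kaplan--Schmid asymptotics available (their theory needs no arithmeticity of \(\Gamma\)).

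The second problem is that the step you yourself identify as ``the crux'' --- matching the \(L^2\)-harmonic Hodge filtration on intersection cohomology with the filtration coming from the \(\mathscr{D}\)-module pushforward --- is not an expected compatibility but a deep theorem: it is the main new analytic result of Mochizuki's paper (announced but never published by Kashiwara--Kawai in the VHS case). Your proposal therefore either assumes that theorem, in which case it is Mochizuki's proof and not a new argument, or leaves its hardest step unproved. The paper deliberately avoids this comparison by using the \(L^2\)-theory of Cattani--Kaplan--Schmid and Kashiwara--Kawai only for the \emph{topological} hard Lefschetz statement (3) in the normal crossings case (plus Mochizuki's purely linear-algebraic reduction lemma for general supports), and then obtains the Hodge-theoretic assertions (1), (2), (4) by the geometric \(\mathbb{P}^1\)-factorization route above. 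Your reduction of (3) to generic fibres via strict support is in the right spirit, but as written the plan does not produce the polarization formula, and it misplaces both the role of the integral structure and the source of the Hodge structure on \(\Tpc {a_X} k M\).
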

Here \(\Tpc f k\) denotes the \(k\)th derived direct image functor of triples of \(\Dt\)-modules (see \autoref{sec:org6cd7cae}) and \(a_X\colon X\to \{pt\}\) the constant map. A class \(l\in H^2(X,\mathbb{R}(1))\) is said to be relatively Kähler along \(f\) if, locally over \(Y\), there exists a closed 2-form \(\widetilde{l}\) representing \(l\) and a Kähler form \(\eta\) on the base such that \(\widetilde{l}+f^{\ast}\eta\) is a Kähler form (see also \autoref{sec:org3a41ff2}).
An integral structure on a variation of Hodge structures is just a local system with integer coefficients that recovers the complex local system of the VHS; in general, an integral structure is a constructible complex with integer coefficients.

Note that if \(K_{\mathbb{R}}\) is a real perverse sheaf which provides a real structure for \(M\), then the hard Lefschetz isomorphism on \(M\) is induced by a corresponding isomorphism \(l^k\colon {}^pR^{-k}f_{\ast}K_{\mathbb{R}}\to {}^pR^kf_{\ast}K_{\mathbb{R}}\).

If \(f\colon X\to Y\) is a projective morphism, our methods in fact apply to arbitrary polarizable Hodge modules, not just those which admit an integral structure (see \autoref{rmk:projective}). This strictly generalizes \autocite[Théorème 5.3.1]{Saito1988}, since the Kähler cone may be bigger than the ample cone on projective manifolds.

\begin{corollary}
\label{cor:rational-decomp}
With notation and assumptions as in \autoref{thm:decomp}, suppose \(F\) is a subfield of \(\mathbb{C}\) and that \(K_F\) is a perverse sheaf over \(F\) underlying \(M\), i.e. that there is given an isomorphism between \(K_F\otimes_F \mathbb{C}\) and the de Rham complex of the \(\D\)-module underlying \(M\). Then there is a decomposition \(\mathbf{R}f_{\ast}K_F\simeq\bigoplus_k{}^p\sh H^k\mathbf{R}f_{\ast}K_F[-k]\) in the derived category.
\end{corollary}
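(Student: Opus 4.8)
The plan is to obtain the decomposition of \(\mathbf{R}f_{\ast}K_F\) in two stages: first decompose \(\mathbf{R}f_{\ast}K_{\mathbb C}\) over \(\mathbb C\) using the relative hard Lefschetz theorem of \autoref{thm:decomp}(3) together with Deligne's formal criterion, and then descend the splitting to \(F\) by faithful flatness of \(\mathbb C/F\). Throughout write \(K_{\mathbb C}:=K_F\otimes_F\mathbb C\), which by hypothesis is the de Rham complex of the \(\D\)-module underlying \(M\).

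First I would reformulate \autoref{thm:decomp} in terms of the constructible complex \(\mathbf{R}f_{\ast}K_{\mathbb C}\). Since \(f\) is proper, the de Rham functor intertwines \(\Tp f\) with \(\mathbf{R}f_{\ast}\) and is compatible with the perverse \(t\)-structure; hence the strictness in part (1) together with the conclusion of part (2) identifies \({}^p\sh H^k\mathbf{R}f_{\ast}K_{\mathbb C}\) with the de Rham complex of the \(\D\)-module underlying \(\Tpc f k M\). The Lefschetz operator attached to \(l\) is realized on \(\mathbf{R}f_{\ast}K_{\mathbb C}\) as cup product with \(l\), a morphism \(l\colon\mathbf{R}f_{\ast}K_{\mathbb C}\to\mathbf{R}f_{\ast}K_{\mathbb C}[2]\), and part (3) says precisely that \(l^k\colon{}^p\sh H^{-k}\mathbf{R}f_{\ast}K_{\mathbb C}\to{}^p\sh H^{k}\mathbf{R}f_{\ast}K_{\mathbb C}\) is an isomorphism for all \(k\ge 0\). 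A Lefschetz operator inducing hard Lefschetz on perverse cohomology splits the complex by Deligne's formal decomposition criterion, so \(\mathbf{R}f_{\ast}K_{\mathbb C}\simeq\bigoplus_k{}^p\sh H^k\mathbf{R}f_{\ast}K_{\mathbb C}[-k]\) in the derived category over \(\mathbb C\).

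It remains to descend this splitting along \(\mathbb C/F\), and I would do so by induction on the number of nonzero perverse cohomology objects of \(K:=\mathbf{R}f_{\ast}K_F\). Because \(-\otimes_F\mathbb C\) is exact and \(t\)-exact for the perverse \(t\)-structure and \(f\) is proper, one has \(K\otimes_F\mathbb C\simeq\mathbf{R}f_{\ast}K_{\mathbb C}\) and \({}^p\sh H^kK\otimes_F\mathbb C\simeq{}^p\sh H^k\mathbf{R}f_{\ast}K_{\mathbb C}\), and the canonical truncation triangles of \(K\) base-change to those of \(\mathbf{R}f_{\ast}K_{\mathbb C}\). Letting \([a,b]\) be the perverse amplitude of \(K\) and writing \(P:={}^p\sh H^bK\), the connecting morphism \(\partial\) of the triangle \(\tau_{<b}K\to K\to P[-b]\xrightarrow{\partial}(\tau_{<b}K)[1]\) lies in \(\hom_{D^b_c(Y,F)}(P[-b],(\tau_{<b}K)[1])\). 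The \(\mathbb C\)-splitting forces \(\partial\otimes_F\mathbb C=0\); since \(\hom_{D^b_c(Y,F)}(-,-)\otimes_F\mathbb C\simeq\hom_{D^b_c(Y,\mathbb C)}(-\otimes_F\mathbb C,-\otimes_F\mathbb C)\) and \(\mathbb C\) is faithfully flat over \(F\), the base-change map on these \(\hom\)-spaces is injective and therefore \(\partial=0\). Thus \(K\simeq\tau_{<b}K\oplus P[-b]\); as \(\tau_{<b}K\otimes_F\mathbb C\simeq\tau_{<b}\mathbf{R}f_{\ast}K_{\mathbb C}\) is again split, the inductive hypothesis applied to \(\tau_{<b}K\) finishes the descent.

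The content specific to this corollary is precisely this descent: the relative Kähler class \(l\), hence the Lefschetz operator and Deligne's resulting splitting, is defined a priori only over \(\mathbb C\) and not over the possibly smaller field \(F\), so one cannot write down an \(F\)-decomposition directly. The step to check with care is the base-change compatibility of the truncation triangles together with the injectivity furnished by faithful flatness; granting these, the obstruction to splitting over \(F\) is detected after extension to \(\mathbb C\), where it vanishes by \autoref{thm:decomp}.
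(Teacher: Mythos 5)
Your argument is correct, and while the first half (relative hard Lefschetz from \autoref{thm:decomp}(3) plus Deligne's criterion to split \(\mathbf{R}f_{\ast}K_{\mathbb{C}}\)) coincides with the paper's, your descent from \(\mathbb{C}\) to \(F\) is genuinely different. The paper sets \(E=\mathbb{R}\) or \(\mathbb{C}\) according to whether \(F\) is real, observes that the maps \(f_k\in\hom({}^p\sh H^kC[-k],C)\) inducing automorphisms of \({}^p\sh H^kC\) form a Zariski-open locus in a finite-dimensional \(\hom\)-space satisfying \(\hom(\cdot,\cdot)\otimes_FE\simeq\hom(\cdot_E,\cdot_E)\), and then perturbs an \(E\)-point of that locus to an \(F\)-point using density of \(F\) in \(E\); this is exactly the ``deforming the decomposition'' described after the statement of the corollary. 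You instead kill the obstruction directly: the connecting morphisms of the perverse truncation triangles of \(\mathbf{R}f_{\ast}K_F\) live in \(F\)-vector spaces that inject into their base changes to \(\mathbb{C}\), and they vanish there because the base-changed triangles are the truncation triangles of the split complex \(\mathbf{R}f_{\ast}K_{\mathbb{C}}\) (the projection to the top perverse cohomology acquires a section, so the composite with the connecting map is zero). Both arguments rest on the same compatibilities of \(\mathbf{R}f_{\ast}\), perverse truncation, and \(\hom\) with extension of scalars; yours has the advantage of needing only the injectivity of \(V\to V\otimes_F\mathbb{C}\) rather than an openness-plus-density perturbation, and it dispenses with the case distinction between real and non-real \(F\). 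The paper's version produces a splitting that is an arbitrarily small deformation of a chosen one over \(E\), which is slightly more information but is not needed for the statement.
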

The decomposition over \(F\) is obtained by taking a decomposition over either \(\mathbb{R}\) or \(\mathbb{C}\) depending on whether \(F\) is a real field, and then deforming this decomposition using density of \(F\) in the larger field.
The perverse sheaf \(K_F\) is not required to be compatible with the Hodge filtration on \(M\) in any way, hence the decomposition of \(K_F\) will not be compatible with the Hodge module structure. Even if \(M\) is an \(F\)-Hodge module, however, the decomposition of \(K_F\) will still not generally be a decomposition of \(F\)-Hodge modules, since the deformation procedure is not compatible with the Hodge filtrations. It is possible that there is some other decomposition of \(K_F\) which arises from a decomposition of \(F\)-Hodge modules, but there seems to be no a priori reason why such a decomposition should exist unless the map \(f\) is projective.

Kollár in \autocite{Kollar1986a} stated a series of conjectures about the existence of certain coherent sheaves associated to generically defined VHS, satisfying certain properties enjoyed by the canonical bundle and its higher direct images. In the projective case these conjectures were proven by Saito \autocite{Saito1991}. As a consequence of \autoref{thm:decomp}, we can prove parts of those conjectures for Kähler morphisms and integral VHS, see \autoref{thm:kollar}.
\subsection{Outline of the proof}
\label{sec:org758c477}
The central idea in this paper is to use period maps to replace the Lefschetz pencils in Saito's proof in the projective case. If one tries to imitate Saito's argument in the Kähler setting, at some point one wants to factor the constant map \(a_X\colon X\to \{\mathrm{pt}\}\) through some intermediate map \(f\colon X\to Y\) to allow for induction on the fibre dimension of the morphism. In the projective setting there are plenty of rational maps available for this purpose, but a general compact Kähler manifold will not admit any, even meromorphic, maps to lower dimensional varieties. However, in Saito's argument, \(X\) comes equipped with a VHS defined on a Zariski open subset \(U\) of \(X\). Since we assume an integral structure on our Hodge modules, this VHS will also carry an integral structure, giving rise to a period map defined on \(U\). The case where the period map is trivial can be handled directly, and if the period map is non-trivial (more precisely, if it has infinite monodromy), we show that, up to replacing \(X\) with a generically finite cover, \(X\) admits a meromorphic map to \(\mathbb{P}^1\). Here we use a theorem essentially due to Sommese \autocite{Sommese1978} to compactify the period map (see \autoref{thm:sommese-compactification}), and a theorem of Brunebarbe and Cadorel \autocite{Brunebarbe2020} to further map from the image of the compactified period map to \(\mathbb{P}^1\) (\autoref{cor:VHS-implies-algebraicity}).

The methods of this paper would apply to Hodge modules with more general coefficients if one could generalize this construction of meromorphic functions to the more general setting.

We will need some input from analysis in two parts of the proof. The first part is for the case of the constant map from a curve, where we use Zucker's results on \(L^2\)-cohomology on a curve \autocite{Zucker1979}; note that Saito's proof in the projective case proceeds in the same manner. The other part is in proving the hard Lefschetz theorem for the cohomology of a Hodge module supported on a compact subvariety of a Kähler manifold. For this, Mochizuki proves a reduction lemma \autocite[Section 6.3]{Mochizuki2022}, implying in our setting that hard Lefschetz for arbitrary Hodge modules can be reduced to hard Lefschetz for intersection cohomology of VHS defined on the complement of a normal crossings divisor, which follows from the \(L^2\)-analysis of \autocite{Cattani1987,Kashiwara1987}. Mochizuki's reduction lemma is in essence a (very complicated) lemma in linear algebra, so the analytic input to our proof comes entirely from the classical results of \autocites{Kashiwara1987}[][]{Cattani1987}[][]{Zucker1979}.

Due to the difficulty of handling certain signs in Saito's implementation of Hodge modules, we instead follow the implementation of complex Hodge modules given by the MHM project of Sabbah-Schnell \autocite{Sabbah2018}, which relies only on D-modules and does not use perverse or constructible sheaves in the definition. This is particularly important in the proof of \autoref{lemma:P1-factorization}. See \autoref{sec:org1bcedfc} for details.
\subsection{Related work}
\label{sec:orgf07e9fd}
The results of this paper are special cases of Mochizuki's paper \autocite{Mochizuki2022} which proves a more general version of the decomposition theorem for Kähler morphisms for tame twistor \(\mathscr D\)-modules \autocite{Mochizuki2022}, hence in particular covering the case of complex Hodge modules. The proof given here is significantly simpler, however, and should be easier to follow for anyone familiar with Saito's original argument in the projective case. Further, this paper applies to all Hodge modules of geometric origin, hence to a large fraction of the Hodge modules encountered in practice.

It is a theorem of Cattani-Kaplan-Schmid and Kashiwara-Kawai \autocite{Cattani1987,Kashiwara1987} that the intersection cohomology of a variation of Hodge structures defined on the complement of a normal crossings divisor can be computed using \(L^2\)-cohomology with respect to an appropriately chosen metric. Since one can prove the Kähler identities for \(L^2\)-forms, this implies that the intersection cohomology satisfies hard Lefschetz, and endows it with a polarizable Hodge structure. Mochizuki's main new result is that this \(L^2\)-Hodge filtration coincides with the proposed Hodge filtration defined in terms of filtered \(\mathscr D\)-modules by Saito (rather, Mochizuki proves a twistor version of this result, but we will stick to Hodge modules in this paper. Note also that the VHS version of this theorem was the never-published result announced in \autocite{Kashiwara1986}).

We note that the reduction lemma of \autocite[Section 6.3]{Mochizuki2022} that we use in the present paper does not depend on any of the new analytic results that Mochizuki proves.

Separate from the results on \(L^2\)-cohomology,
Saito \autocite{Saito1990a} claimed to prove parts of \autoref{thm:decomp} (though for arbitrary real Hodge modules) conditional on results announced by Kashiwara and Kawai \autocite{Kashiwara1986}; however, these announced results were never published (see also \autocite[Remarks, end of Section 2.5]{Saito2016}). Saito later \autocite{Saito2022} gave a corrected proof for the decomposition theorem for the constant sheaf.

\begin{acknowledgements}
I thank Christian Schnell for bringing my attention to this problem and many useful discussions; Claude Sabbah for answering my questions regarding the MHM project and polarizations, and detailed feedback on a draft of this paper; Johan Commelin for his interest in the issue of signs in polarizations of Hodge modules in the sense of Saito; and Thomas Krämer for reading and giving feedback on a draft.
\end{acknowledgements}
\section{Preliminaries}
\label{sec:org3fe6ab7}
\subsection{Hodge structures, filtrations and polarizations}
\label{sec:org9a97fba}
In this section, we spell out some terminology and conventions regarding complex Hodge structures and polarizations, following \autocite{Sabbah2018}. We will use Deligne's sign convention \autocite{Deligne1971} (used also by both the MHM project and Saito \autocite{Saito1988}); note that this differs from the papers of Cattani-Kaplan-Schmid \autocite{Cattani1987,Cattani1986} by a factor \((-1)^n\) for pure weight \(n\) Hodge structures \autocite[Remark 2.5.15]{Sabbah2018}. This difference will be relevant in proving \autoref{lemma:mhs-polarization-reduction-of-variables}.

Throughout this section, let \(\mathcal{H}\) be a finite-dimensional complex vector space. A pure \(\mathbb{C}\)-Hodge structure of weight \(w\) on \(\mathcal{H}\) is the data \(H=(\mathcal{H},F'^{\bullet}\mathcal{H},F''^{\bullet}\mathcal{H})\), where \(F'\) and \(F''\) are \(w\)-opposite decreasing filtrations, i.e. \(\mathcal{H}\cong \bigoplus_p\mathcal{H}^{p,w-p}\) where \(\mathcal{H}^{p,w-p}=F'^p\mathcal{H}\cap F''^{w-p}\mathcal{H}\).
If \(R\subseteq \mathbb{R}\) is a subring, then an \(R\)-Hodge structure on a free \(R\)-module \(\mathcal{H}\) is a \(\mathbb{C}\)-Hodge structure on \(\mathcal{H}_{\mathbb{C}}\) where additionally \(F''^{w-p}= \widebar{F'^p}\).

A mixed Hodge structure is the data \(H=(\mathcal{H},F'^{\bullet}\mathcal{H},F''^{\bullet}\mathcal{H},W_{\bullet}\mathcal{H})\), where \(W_{\bullet}\) is now an increasing exhaustive filtration such that the filtrations induced by \(F'\) and \(F''\) on \(\operatorname{gr}_j^W\mathcal{H}\) define a pure Hodge structure of weight \(j\) for each \(j\in \mathbb{Z}\). For a ring \(R\subseteq \mathbb{R}\), a mixed \(R\)-Hodge module is a free \(R\)-module \(\mathcal{H}\) consists of a mixed Hodge structure on \(\mathcal{H}_{\mathbb{C}}\) such that the weight filtration \(W_{\bullet}\) is defined over \(R\), and such that the Hodge structures on the associated graded pieces of \(W_{\bullet}\) are \(R\)-Hodge structure.

Let \(\mathbb{Z}(w)\) be the unique weight \(-2w\) integral Hodge structure on \((2\pi i)^w\mathbb{Z}\) (by which we mean an \(\mathbb{R}\)-Hodge structure on \((2\pi i)^w\mathbb{Z}\otimes_{\mathbb{Z}}\mathbb{R}\)); let \(\mathbb{R}(w)\) and \(\mathbb{C}(w)\) be the associated real and complex Hodge structures. For any trifiltered vector space \(H=(\mathcal{H},F'^{\bullet},F''^{\bullet},W_{\bullet})\), let \(H(w)=H\otimes_{\mathbb{Z}}\mathbb{Z}(w)\) with the induced filtrations. Here we consider any pure Hodge structure \(K\) of weight \(w\), and in particular \(\mathbb{Z}(w)\), as a mixed Hodge structure with \(\operatorname{gr}_w^WK=K\).

Given a Hodge structure, let \(C_D\colon \mathcal{H}\to \mathcal{H}\) be the linear automorphism given by multiplication by \((-1)^q\) on \(\mathcal{H}^{p,q}\); this is the Deligne-Weil operator associated to the Hodge structure \(H\).

\begin{definition}
Let \(H\) be a pure Hodge structure of weight \(w\) as above. A \emph{polarization} of \(H\) is a morphism of Hodge structures \(S\colon H\otimes \widebar{H}\to \mathbb{C}(-w)\) such that
\begin{enumerate}
\item \(S\) is hermitian, i.e. \(\widebar{S(x,\widebar{y})}=S(y,\widebar{x})\) for all \(x,y\in \mathcal{H}\), and
\item the pairing \(h(x,\widebar{y})=S(C_Dx,\widebar{y})=S(x, \widebar{C_Dy})\) on \(\mathcal{H}\) is hermitian positive definite.
\end{enumerate}
\end{definition}

While one can define rational and real polarizations \autocite[Section 2.5.c]{Sabbah2018}, we will not consider these in this paper.

Given an \(\mathfrak{sl}_2\)-representation on \(\mathcal{H}\), by the associated \(\mathfrak{sl}_2\)-triple we will mean the images \((\mathrm{X,H,Y})\) of the standard generators of \(\mathfrak{sl}_2\), where \(\mathrm{X}\) increases weights, \(\mathrm{Y}\) lowers weights, and \(\mathrm{H}\) is semisimple.

\begin{definition}
An \(\mathfrak{sl}_2\)-Hodge structure of central weight \(w\) on \(\mathcal{H}\) consists of filtrations \(F',F''\) and an \(\mathfrak{sl}_2\)-triple \((\mathrm{X,H,Y})\) such that
\begin{enumerate}
\item Each eigenspace \(\mathcal{H}_k\) of \(\mathrm{H}\), \(k\in \mathbb{Z}\), underlies a pure Hodge structure \(H_k\) of weight \(w+k\) with filtrations induced by \(F',F''\),
\item \(\mathrm{X}\colon H_k\to H_{k+2}(1)\) and \(\mathrm{Y}\colon H_k\to H_{k-2}(-1)\) are morphisms of Hodge structures.

A polarization of this \(\mathfrak{sl}_2\)-Hodge structure is a morphism of mixed Hodge structures
\begin{align}
S\colon H\otimes \widebar{H}\to \mathbb{C}(-w),
\end{align}
where the weight filtration on \(W\) is given by \(W(Y)\) (equivalently, it is induced by the grading by the \(H_k\)), such that
\begin{equation}\label{sl2:adj}
S(\mathrm{H}x,\widebar{y})=-S(x, \widebar{\mathrm{H}y}),\qquad S(\mathrm{X}x, \widebar{y})=S(x, \widebar{\mathrm{X}y}),\qquad S(\mathrm{Y}x, \widebar{y})=S(x, \widebar{\mathrm{Y}y})
\end{equation}
and the induced form
\begin{align}
S_{-k}\colon H_{-k}\otimes \widebar{H_{-k}}&\to \mathbb{C}(-w+k)\\
x\otimes \widebar{y} &\mapsto S(\mathrm{X}^kx,\widebar{y})\nonumber
\end{align}
polarizes the pure Hodge structure \(PH_{-k}\) for every \(k\ge 0\), where
\begin{align}
PH_{-k}=\ker(H_{-k}\xrightarrow{\mathrm{X}^{k+1}}H_{k+2})=\ker(H_{-k}\xrightarrow{\mathrm{Y}} H_{-k-2})
\end{align}
is the primitive subspace.
\end{enumerate}
\end{definition}

The polarization condition is equivalent to asking that \((-1)^kS(\mathrm{Y}^k x,\widebar{y})\) polarizes \(PH_k\) for every \(k\ge 0\) \autocite[3.2.10-11]{Sabbah2018}. Note also that specifying an \(\mathfrak{sl}_2\)-representation on \(\mathcal{H}\) is equivalent to giving a grading \(\mathcal{H}=\bigoplus_{k}\mathcal{H}_k\) and a graded operator \(\mathcal{H}\to \mathcal{H}[2]\) or \(\mathcal{H}\to \mathcal{H}[-2]\). The grading corresponds to the eigenspaces of the semisimple operator in an \(\mathfrak{sl}_2\)-triple, the graded operator to one of the nilpotent operators, and the other nilpotent operator is uniquely determined from this.

\begin{definition}
Let \(F',F''\) be filtrations on \(\mathcal{H}\) and \((\mathrm{X}_1,\mathrm{H}_1,\mathrm{Y}_1),(\mathrm{X}_2,\mathrm{H}_2,\mathrm{Y}_2)\) two commuting \(\mathfrak{sl}_2\)-triples. Write \(\mathcal{H}_{i,j}\) for the intersection of the \(i\)-eigenspace of \(\mathrm{H}_1\) and the \(j\)-eigenspace of \(\mathrm{H}_2\), so \(\mathcal{H}=\bigoplus_{i,j} \mathcal{H}_{i,j}\). We call this a bi-\(\mathfrak{sl}_2\)-Hodge structure of central weight \(w\) if for any \(k\), the data \((\bigoplus_j \mathcal{H}_{k,j},F',F'',(\mathrm{X}_2,\mathrm{H}_2,\mathrm{Y}_2))\) and \((\bigoplus_i\mathcal{H}_{i,k},F',F'',(\mathrm{X}_1,\mathrm{H}_1,\mathrm{Y}_1))\) define \(\mathfrak{sl}_2\)-Hodge structures of central weight \(w+k\).

A polarization of this bi-\(\mathfrak{sl}_2\)-Hodge structure is a morphism of mixed Hodge structures
\begin{align}
S\colon H\otimes \widebar{H}\to \mathbb{C}(-w)
\end{align}
(with weight filtration induced by the total grading of \(\mathcal{H}\)) such that the relations of Eq. \eqref{sl2:adj} are satisfied for both \(\mathfrak{sl}_2\)-triples, and the induced form
\begin{align}
S_{-i,-j}\colon \mathcal{H}_{-i,-j}\otimes \overline{\mathcal{H}_{-i,-j}}&\to \mathbb{C}(-w+i+j)\\
x\otimes \widebar{y}&\mapsto S(X_1^iX_2^jx,\widebar{y})\nonumber
\end{align}
polarizes the biprimitive Hodge structure \(P_1P_2H_{-i,-j}\) for all \(i,j\ge 0\).
\end{definition}

Given filtrations \(F',F''\) of \(\mathcal{H}\), we will, by slight abuse of notation, talk about nilpotent operators \(\mathrm{N}\colon H\to H(-1)\), by which we mean morphisms such that \(\mathrm{N}^l\colon H\to H(-l)\) vanishes for \(l\gg 0\). Equivalently, \(2\pi i \mathrm{N}\) is a nilpotent endomorphism of \(\mathcal{H}\) such that \(\mathrm{N}F^p\subset F^{p-1}\) for all \(p\), where \(F\) is either \(F'\) or \(F''\). For such an operator, we get an associated weight filtration \(W(\mathrm{N})\) on \(\mathcal{H}\). If \(H\) is a rational or real Hodge structure and \(\mathrm{N}\) is a morphism of rational or real Hodge structures, then \(2\pi i \mathrm{N}\) is a rational resp. real endomorphism, so \(W(\mathrm{N})\) is defined over \(\mathbb{Q}\) resp. \(\mathbb{R}\).

\begin{definition}
A Hodge-Lefschetz structure of central weight \(w\) on \(\mathcal{H}\) consists of filtrations \(F',F''\) and a nilpotent operator \(\mathrm{N}\colon H\to H(-1)\) such that \((\mathcal{H},F',F'',W(\mathrm{N}))\) is a mixed Hodge structure of central weight \(w\), or equivalently, \(\operatorname{gr}^{W(\mathrm{N})}\mathcal{H}\), with the filtrations and \(\mathfrak{sl}_2\)-triple induced by \(F',F''\) and \(\mathrm{N}\) (acting as the weight-decreasing operator \(\mathrm{Y}\)), is an \(\mathfrak{sl}_2\)-Hodge structure of central weight \(w\).

A polarization of \((\mathcal{H},F',F'',\mathrm{N})\) is a morphism of mixed Hodge structures
\begin{align}
S\colon H\otimes \widebar{H}\to \mathbb{C}(-n)
\end{align}
with respect to which \(\mathrm{N}\) is self-adjoint, and such that the induced pairing on \(\operatorname{gr}^{W(\mathrm{N})}\mathcal{H}\) polarizes the induced \(\mathfrak{sl}_2\)-Hodge structure.
\end{definition}

\begin{definition}
Let \((\mathcal{H},F',F'',W)\) be a mixed Hodge structure of central weight \(w\).

Given commuting nilpotent operators \(\mathrm{N}_1,\ldots,\mathrm{N}_k\colon H\to H(-1)\), let
\begin{align}
\mathrm{C}=\mathrm{C}(\mathrm{N}_1,\ldots,\mathrm{N}_k)= \{\sum_l \lambda_l\mathrm{N}_l\mid \lambda_l>0\}
\end{align}
be the open (real) cone generated by the \(\mathrm{N}_l\).

We say that \((\mathcal{H},F',F'',W)\) is polarized by a pairing \(S\) and the cone \(\mathrm{C}\) if \((\mathcal{H},F',F'',\mathrm{N},S)\) is a polarized Hodge-Lefschetz structure for every \(\mathrm{N}\in \mathrm{C}\).
\end{definition}

\begin{remark}
\label{rmk:lefschetz}
Usually in the theory of Hodge modules, the terminology introduced here is used in the case where \(\mathrm{N}\) is the logarithm of the unipotent part of a monodromy operator, where \(\mathrm{N}\) indeed decreases the weight by \(2\). In this paper, we will instead be concerned with Lefschetz operators \(\mathrm{L}\) given by cup product with a Kähler form, or more generally a closed \((1,1)\)-form, which increases weights by \(2\). However, in those cases \(H\) will be real and \(\mathbb{R}\)-split, i.e. will come with a splitting \(H=\bigoplus H_j\) of the weight filtration where each \(H_j\) is a real pure Hodge structure of weight \(j\), and \(\mathrm{L}\) is graded in the sense that \(2\pi i \mathrm{L}H_j\subset H_{j+2}(1)\). We can then replace \((\mathcal{H},F',F'')\) with \((\mathcal{H},\widetilde{F'},\widetilde{F''})=\bigoplus_j (\mathcal{H}_j,F',F'')(j)\), and then \(L\) acting on \((\mathcal{H},\widetilde{F'},\widetilde{F''})\) will decrease weights by \(2\), i.e. \((2\pi i)^{-1}\mathrm{L}\colon (\mathcal{H},\widetilde{F'},\widetilde{F''})\to (\mathcal{H},\widetilde{F'},\widetilde{F''})(-1)\). Thus we will allow ourselves to talk about Hodge-Lefschetz structures polarized by nilpotent real endomorphisms \(\mathrm{L}\) that increase the weight, as long as we have a splitting of the weight filtration. An equivalent trick is also used in \autocite[End of §3]{Cattani1987}.

Note that despite the grading on \(H\), we will still need to talk about Hodge-Lefschetz structures. For example, in \autoref{lemma:alteration} below, we need to consider two commuting nilpotent operators, with a given splitting of the weight filtration of each, where the corresponding \(\mathfrak{sl}_2\)-triples do not commute.
\end{remark}

The following is \autocite[Proposition 3.2.26]{Sabbah2018}.
\begin{lemma}
\label{lemma:bisl2}
Suppose given a bi-\(\mathfrak{sl}_2\)-Hodge structure of central weight \(w\) as above. Then the triple \((\mathrm{X}_1+\mathrm{X}_2,\mathrm{H}_1+\mathrm{H}_2,\mathrm{Y}_1+\mathrm{Y}_2)\) defines an \(\mathfrak{sl}_2\)-Hodge structure on \((\mathcal{H},F',F'')\). The corresponding grading of \(H\) is the total grading \(H=\bigoplus_k H^k\) defined by \(H^k=\bigoplus_{i+j=k}H_{i,j}\). If \(S\) polarizes the bi-\(\mathfrak{sl}_2\)-Hodge structure, it also polarizes the associated \(\mathfrak{sl}_2\)-Hodge structure.
\end{lemma}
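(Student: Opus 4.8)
The plan is to settle the structural claims quickly and concentrate on the polarization, which carries all the real content.

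\emph{The diagonal triple.} Since the two triples commute, all cross-brackets vanish, so \([\mathrm{X}_1+\mathrm{X}_2,\mathrm{Y}_1+\mathrm{Y}_2]=\mathrm{H}_1+\mathrm{H}_2\) and \((\mathrm{X}_1+\mathrm{X}_2,\mathrm{H}_1+\mathrm{H}_2,\mathrm{Y}_1+\mathrm{Y}_2)\) is an \(\mathfrak{sl}_2\)-triple. As \(\mathrm{H}_1+\mathrm{H}_2\) acts by \(i+j\) on \(\mathcal{H}_{i,j}\), its eigenspaces are exactly the \(H^k=\bigoplus_{i+j=k}\mathcal{H}_{i,j}\), the total grading. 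The definition of a bi-\(\mathfrak{sl}_2\)-Hodge structure forces each \(\mathcal{H}_{i,j}\) to be pure of weight \(w+i+j\) (read off from either of the two sub-\(\mathfrak{sl}_2\)-Hodge structures through it), so \(H^k\) is pure of weight \(w+k\); and since \(\mathrm{X}_1\colon\mathcal{H}_{i,j}\to\mathcal{H}_{i+2,j}(1)\) and \(\mathrm{X}_2\colon\mathcal{H}_{i,j}\to\mathcal{H}_{i,j+2}(1)\) are morphisms of Hodge structures, so is \(\mathrm{X}_1+\mathrm{X}_2\colon H^k\to H^{k+2}(1)\), and likewise for \(\mathrm{Y}_1+\mathrm{Y}_2\). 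This gives the asserted \(\mathfrak{sl}_2\)-Hodge structure.

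\emph{Reduction to an isotypic piece.} The adjointness relations \eqref{sl2:adj} for the diagonal triple are obtained by adding those for the two given triples, and the morphism-of-mixed-Hodge-structures condition is immediate because the diagonal weight filtration is the total grading; so everything comes down to the positivity of \(S^{\mathrm{diag}}_{-c}(x,\widebar y)=S((\mathrm{X}_1+\mathrm{X}_2)^c x,\widebar y)\) on the primitive spaces \(PH^{-c}\). The relations \eqref{sl2:adj} make \(\mathrm{X}_1,\mathrm{X}_2,\mathrm{Y}_1,\mathrm{Y}_2\) self-adjoint for \(S\), hence \(S\) is \(\mathfrak{sl}_2\times\mathfrak{sl}_2\)-invariant, and a Schur-type argument shows it is orthogonal across the distinct isotypic components of the double Lefschetz decomposition \(\mathcal{H}=\bigoplus_{i,j\ge 0}\bigoplus_{r\le i,\,s\le j}\mathrm{X}_1^r\mathrm{X}_2^s\,P_1P_2H_{-i,-j}\). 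I would therefore fix \(i,j\) and restrict to the summand generated by \(P=P_1P_2H_{-i,-j}\); as a bifiltered bi-\(\mathfrak{sl}_2\)-module it is \(P\otimes\mathbb{V}_i\otimes\mathbb{V}_j\), where \(\mathbb{V}_i,\mathbb{V}_j\) are the standard irreducible \(\mathfrak{sl}_2\)-modules placed in the Tate twists dictated by \(\mathrm{X}_\ell\colon(\cdot)\to(\cdot)(1)\), and \(P\) is pure of weight \(w-i-j\) polarized by \(S_{-i,-j}\). Moving all the powers of \(\mathrm{X}_\ell\) onto one argument by self-adjointness identifies \(S\) on this piece with \(S_{-i,-j}\) tensored with the standard invariant forms on \(\mathbb{V}_i,\mathbb{V}_j\).

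\emph{The representation-theoretic positivity.} Under the diagonal \(\mathfrak{sl}_2\) the factor \(\mathbb{V}_i\otimes\mathbb{V}_j\) splits by Clebsch--Gordan into \(\bigoplus_c\mathbb{V}_c\), and the primitive space \(PH^{-c}\) of this piece is \(P\otimes u_c\cong P\) for the (essentially unique) diagonal lowest-weight vector \(u_c\) of weight \(-c\). The heart of the matter is to show that \(S((\mathrm{X}_1+\mathrm{X}_2)^c u_c,\widebar{u_c})=\lambda_{i,j,c}\,s_0\), with \(s_0\) a value of \(S_{-i,-j}\) on \(P\) and \(\lambda_{i,j,c}\) a nonzero real of sign \((-1)^{(i+j-c)/2}\). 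Granting this, the argument closes: \(u_c\) arises from the biprimitive vectors by \((i+j-c)/2\) raising steps, so \(PH^{-c}\) has Hodge type shifted from that of \(P\) by \(((i+j-c)/2,(i+j-c)/2)\) and its Weil operator \(C_D\) differs by the sign \((-1)^{(i+j-c)/2}\); this sign cancels that of \(\lambda_{i,j,c}\), making \(S^{\mathrm{diag}}_{-c}(C_Dx,\widebar y)\) on \(PH^{-c}\) a positive multiple of \(S_{-i,-j}(C_Dx,\widebar y)\) on \(P\), which is positive definite by hypothesis. I expect the only genuine difficulty to be pinning down the sign of \(\lambda_{i,j,c}\): this is a purely combinatorial positivity statement for \(\mathfrak{sl}_2\times\mathfrak{sl}_2\)-representations, equivalent to the classical fact that a tensor product of polarized Lefschetz structures is again polarized. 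I would settle it either by a direct Clebsch--Gordan computation or, more economically, by inducting on \(j\) to reduce the two-operator positivity to repeated tensoring with the standard two-dimensional module and the rank-one polarization criterion; the case \(i=j=1\), where \(\lambda_{1,1,2}=2\) and \(\lambda_{1,1,0}=-2\) match the Weil-operator signs \(+\) and \(-\), already displays the mechanism.
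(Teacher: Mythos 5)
The paper does not actually prove this lemma: it is quoted verbatim from the MHM project as \autocite[Proposition 3.2.26]{Sabbah2018}, so there is no in-text argument to compare against. Your proposal is the standard representation-theoretic proof of that cited result, and it is essentially correct: the diagonal triple, the identification of the grading, the addition of the adjointness relations, the Schur/weight orthogonality of the double Lefschetz decomposition, the identification of \(S\) on the isotypic piece \(P_1P_2H_{-i,-j}\otimes\mathbb{V}_i\otimes\mathbb{V}_j\) with \(S_{-i,-j}\) tensored with the invariant forms, and the bookkeeping of the Weil operator shift by \((-1)^{(i+j-c)/2}\) are all right (I checked the \(i=j=1\) and \(i=2,j=1\) cases of your sign claim, which come out to \(\lambda_{1,1,2}=2\), \(\lambda_{1,1,0}=-2\), \(\lambda_{2,1,3}=3\), \(\lambda_{2,1,1}=-3\), consistent with \(\operatorname{sign}\lambda_{i,j,c}=(-1)^{(i+j-c)/2}\)). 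The one thing you have not actually done is prove that sign statement in general: you correctly isolate it as the crux, but note that it \emph{is} the content of the proposition — it is exactly the classical ``tensor product of polarized Lefschetz structures is polarized'' theorem (cf. \autocite[Lemme 5.2.15]{Saito1988} and the proof of \autocite[Proposition 3.2.26]{Sabbah2018} itself), so reducing to it is closer to re-citing the result than to reproving it. Your suggested induction — embedding \(\mathbb{V}_j\) as a summand of \(\mathbb{V}_1^{\otimes j}\) with its induced polarization and reducing to tensoring with the two-dimensional module — does close this gap and is the standard way to finish; if you want the lemma to be self-contained you should write that induction out, including the check that the polarization on \(\mathbb{V}_1^{\otimes j}\) restricts correctly to the \(\mathbb{V}_j\) summand.
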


It follows in particular that \((\mathcal{H},F',F'')\), seen as a mixed Hodge structure with weight filtration induced by the total grading, is polarized by \((S,\mathrm{C}(\mathrm{N}_1,\mathrm{N}_2))\). The following lemma explains how to go the other way; applied inductively, it allows to pass from a mixed Hodge structure polarized by a cone \(\mathrm{C}(\mathrm{N}_1,\ldots,\mathrm{N}_k)\) to an \(\mathfrak{sl}_2^k\)-Hodge structure (generalizing bi-\(\mathfrak{sl}_2\)-Hodge structures in the obvious way) by passing to the associated graded along the weight filtrations of the \(\mathrm{N}_i\). We will use this in the case \(k=2\) in \autoref{lemma:alteration} below.

\begin{lemma}
\label{lemma:mhs-polarization-reduction-of-variables}
Let \((\mathcal{H},F',F'',W)\) be a mixed \(\mathbb{R}\)-Hodge structure of central weight \(w\) polarized by \((S,\mathrm{C}(\mathrm{N}_1,\ldots,\mathrm{N}_k))\), where each \(\mathrm{N}_i\) is a real nilpotent operator.

Fix \(h\in \mathbb{Z}\) and define \((\widetilde{\mathcal{H}},\widetilde{F'},\widetilde{F''},\widetilde{W})\) as \(\widetilde{\mathcal{H}}=P_{\mathrm{N}_1}\operatorname{gr}_h^{W(\mathrm{N}_1)}\mathcal{H}\) with the induced filtrations. Let \(\widetilde{\mathrm{C}}=\mathrm{C}(\widetilde{\mathrm{N}_2},\ldots,\widetilde{\mathrm{N}_k})\) where \(\widetilde{\mathrm{N}_i} =\operatorname{gr}_h^{W(\mathrm{N}_1)}\mathrm{N}_i\), and let \(\widetilde{S}=S(\mathrm{N}_1^{-h},\id)\) (where \(\mathrm{N}_1^{-h}\) is the inverse of the hard Lefschetz isomorphism \(\mathrm{N}_1^h\) if \(h>0\)). Then \((\widetilde{\mathcal{H}},\widetilde{F'}, \widetilde{F''},\widetilde{W})\) is a mixed Hodge structure polarized by \((\widetilde{S},\widetilde{\mathrm{C}})\).
\end{lemma}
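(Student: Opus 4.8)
The plan is to prove the reduction directly, verifying the polarized Hodge--Lefschetz condition for $(\widetilde{\mathcal H},\widetilde{F'},\widetilde{F''},\widetilde{\mathrm N},\widetilde S)$ for each $\widetilde{\mathrm N}\in\widetilde{\mathrm C}$; no separate induction on $k$ is needed, since the lemma is itself a single reduction step. Observe first that the case where $\widetilde{\mathrm C}$ is empty (i.e.\ $k=1$) is essentially definitional: the statement that $\widetilde S=S(\mathrm N_1^{-h},\id)$ polarizes the pure Hodge structure $\widetilde{\mathcal H}=P_{\mathrm N_1}\operatorname{gr}_h^{W(\mathrm N_1)}\mathcal H$ is exactly the requirement, built into the definition of a polarized Hodge--Lefschetz structure for $(\mathcal H,F',F'',\mathrm N_1,S)$, that the induced form polarize the primitive part of the associated $\mathfrak{sl}_2$-Hodge structure (here $\mathrm N_1^{-h}$ plays the role of $\mathrm X_1^h$, the inverse of the hard Lefschetz isomorphism). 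The work is therefore to carry the remaining operators along.

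First I would set up the descent of the operators. Using the relative weight filtration theory for commuting nilpotent operators in a polarizing cone (Cattani--Kaplan; see also \autocite{Sabbah2018}), each $\mathrm N_i$ ($i\ge2$) preserves $W(\mathrm N_1)$ with shift $0$, so induces $\widetilde{\mathrm N_i}=\operatorname{gr}_h^{W(\mathrm N_1)}\mathrm N_i$ on the graded pieces; here one uses that for small $t>0$ the element $\mathrm N_1+t\mathrm N_i$ lies in $\mathrm C$ and polarizes, and that $W(\mathrm N_1+t\mathrm N_i)$ is the weight filtration of $\mathrm N_i$ relative to $W(\mathrm N_1)$. Since $\widetilde{\mathrm N_i}$ commutes with the weight-lowering operator $\mathrm N_1$ on $\operatorname{gr}^{W(\mathrm N_1)}\mathcal H$ and preserves the $\mathrm H_1$-grading, a short $\mathfrak{sl}_2$-representation-theory argument (an operator commuting with $\mathrm Y$ and $\mathrm H$ commutes with $\mathrm X$, hence is a morphism of $\mathfrak{sl}_2$-representations) shows it preserves the primitive decomposition, so $\widetilde{\mathrm N_i}$ restricts to a nilpotent operator on $\widetilde{\mathcal H}$. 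The $\widetilde{\mathrm N_i}$ commute, so $\widetilde{\mathrm C}$ is a well-defined cone, and $(\widetilde{\mathcal H},\widetilde{F'},\widetilde{F''})$ is pure as the primitive part of a graded piece of the $\mathfrak{sl}_2$-Hodge structure attached to $\mathrm N_1$.

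The core step is the polarization. Fix $\widetilde{\mathrm N}=\sum_{i\ge2}\mu_i\widetilde{\mathrm N_i}\in\widetilde{\mathrm C}$ and the lift $\mathrm N'=\sum_{i\ge2}\mu_i\mathrm N_i$. For small $t>0$ the operator $\mathrm N_1+t\mathrm N'\in\mathrm C$ gives a polarized Hodge--Lefschetz structure with respect to $S$, and its weight filtration is the relative weight filtration $M(\mathrm N',W(\mathrm N_1))$. I would then pass to $\operatorname{gr}^{W(\mathrm N_1)}\mathcal H$, where the induced $\mathfrak{sl}_2$-triple of $\mathrm N_1$ and the induced operator $\widetilde{\mathrm N}$ fit together, and restrict to the primitive subspace $\widetilde{\mathcal H}$; the induced polarization on this primitive part is exactly $\widetilde S=S(\mathrm N_1^{-h},\id)$, the factor $\mathrm N_1^{-h}$ being the inverse hard Lefschetz isomorphism needed to land the primitive part of $\operatorname{gr}_h$ in the correct Tate twist. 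Matching the relative weight filtration $M(\mathrm N',W(\mathrm N_1))$ with $W(\widetilde{\mathrm N})$ on $\widetilde{\mathcal H}$ and invoking the polarization relations of the Hodge--Lefschetz structure on $\operatorname{gr}^{W(\mathrm N_1)}\mathcal H$ (together with \autoref{lemma:bisl2} to combine the two $\mathfrak{sl}_2$-actions when checking positivity on biprimitive parts) yields that $\widetilde S$ polarizes $(\widetilde{\mathcal H},\widetilde{F'},\widetilde{F''},\widetilde{\mathrm N})$.

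I expect the main obstacle to be twofold. The genuine mathematical content is the relative weight filtration compatibility---that $W(\mathrm N_1+t\mathrm N')$ stabilizes for small $t$ to $M(\mathrm N',W(\mathrm N_1))$ and descends correctly to give $\widetilde W=W(\widetilde{\mathrm N})$ on the primitive part---which is precisely the algebraic heart of the several-variable $\mathrm{SL}_2$-orbit theory of Cattani--Kaplan and must be invoked carefully. The second, more bookkeeping-heavy difficulty is tracking the signs and Tate twists so that $\widetilde S=S(\mathrm N_1^{-h},\id)$ yields genuine positive-definiteness rather than mere nondegeneracy; here the factor $(-1)^n$ distinguishing Deligne's convention from that of Cattani--Kaplan--Schmid enters, as flagged in the conventions above, and must be reconciled to match the positivity built into the definition of a polarized $\mathfrak{sl}_2$-Hodge structure.
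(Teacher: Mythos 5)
Your overall route is genuinely different from the paper's: you try to verify the polarized Hodge--Lefschetz condition for each \(\widetilde{\mathrm N}\in\widetilde{\mathrm C}\) directly, via relative weight filtrations, whereas the paper converts the whole statement through the Cattani--Kaplan--Schmid equivalence \autocite[(2.11)]{Cattani1987} into the assertion that \(\widetilde\theta(z_2,\ldots,z_k)=\exp(\sum z_i\widetilde{\mathrm N_i})\widetilde{F'}\) is a nilpotent orbit, and then checks that assertion \emph{pointwise}: for each frozen \((p_2,\ldots,p_k)\) the one-variable orbit \(z\mapsto\exp(z\mathrm N_1)\exp(\sum p_i\mathrm N_i)F'\) has a limiting mixed Hodge structure polarized by \((S,\mathrm N_1)\), and the primitive graded piece of such a structure is, by the very definition of a polarized Hodge--Lefschetz structure, a pure Hodge structure polarized by \(\widetilde S\) with Hodge filtration \(\widetilde\theta(p)\). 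That reduction is what lets the paper avoid any several-variable positivity comparison.

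The gap in your argument is exactly at the step you label the core one. What must be proved is positive definiteness of \((x,y)\mapsto\widetilde S(\widetilde{\mathrm N}^{\,l}x,\overline{C_Dy})\) on the biprimitive spaces \(P_{\mathrm N_1}P_{\widetilde{\mathrm N}}\) of the bigraded pieces \(\operatorname{gr}^{W(\widetilde{\mathrm N})}\operatorname{gr}^{W(\mathrm N_1)}\mathcal H\). The hypothesis only hands you positivity of \(S(\mathrm N^m\,\cdot,\overline{C\,\cdot})\) on the \(\mathrm N\)-primitive parts of \(\operatorname{gr}^W\mathcal H\) for single elements \(\mathrm N=\mathrm N_1+t\mathrm N'\) of the cone, and the \(\mathrm N\)-primitive subspaces and Weil operators of that single \(\mathfrak{sl}_2\)-structure do not coincide with the biprimitive subspaces and Weil operators of the pair \((\mathrm N_1,\widetilde{\mathrm N})\); relating them requires controlling the degeneration \(t\to 0\), i.e.\ precisely the several-variable \(\operatorname{SL}(2)\)-orbit theorem, which is (essentially) the statement being proved. ``Invoking the polarization relations'' does not bridge this, and \autoref{lemma:bisl2} cannot help here because it runs in the opposite direction: it deduces single-operator positivity \emph{from} biprimitive positivity, not the reverse (the paper explicitly introduces the present lemma as the tool for ``going the other way''). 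As written, your core step is therefore either circular or an appeal to an unproved form of the CKS theory; to repair it you would need to either cite the relevant CKS result explicitly as a black box, or switch to the pointwise one-variable reduction the paper uses.
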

Recall from \autocite[(2.11)]{Cattani1987} that \((\mathcal{H},F',F'',W)\) is a mixed Hodge structure polarized by \((S,\mathrm{C}(\mathrm{N}_1,\ldots,\mathrm{N}_k))\) if and only if \(W=W(\mathrm{N})\) for every \(\mathrm{N}\in \mathrm{C}(\mathrm{N}_1,\ldots,\mathrm{N}_k)\) and the map
\begin{align}
\theta\colon  \mathbb{C}^k &\to \widehat{D}\\
(z_1,\ldots,z_k)&\mapsto \exp\left(\sum z_i\mathrm{N}_i\right)\cdot F'\nonumber
\end{align}
is a nilpotent orbit (see \autocite{Cattani1986}) where \(\widehat{D}\) is the flag manifold of filtrations on \(\mathcal{H}\) of type \(F'\); i.e. \(\mathrm{N}_iF'^p\subset F'^{p-1}\) and there exists some \(\alpha\in \mathbb{R}\) such that \(\theta\) is the period map associated to a pure polarized variation of Hodge structures on
\begin{align}
\{(z_1,\ldots,z_k)\in \mathbb{C}^k\mid \operatorname{Re}z_i<\alpha\}
\end{align}

Note that as we are now dealing with \(\mathbb{R}\)-Hodge structures, we only need to describe the single filtration \(F'\).

Note that \autocite{Cattani1987,Cattani1986} use nilpotent real endomorphisms, corresponding to \(2\pi i \mathrm{N}_j\) for \(\mathrm{N}_j\) as above, hence the condition \(\operatorname{Re}z_i<\alpha\) here rather than the condition \(\operatorname{Im}z_i>\alpha\).
\begin{proof}
Given \((\mathcal{H},F',F'',W;S,\mathrm{C})\) as in the lemma, let \(\theta\) be the associated nilpotent orbit as above.

Similarly, we can define a map
\begin{align}
\widetilde{\theta}\colon \mathbb{C}^{k-1}\to \widetilde{\widehat{D}}\colon (z_2,\ldots,z_k)\mapsto\exp\left(\sum z_i\widetilde{\mathrm{N}_i}\right)\cdot \widetilde{F'}
\end{align}
associated to \((\widetilde{\mathcal{H}},\widetilde{F'}, \widetilde{F''},\widetilde{W};\widetilde{S},\widetilde{\mathrm{C}})\), where \(\widetilde{\widehat{D}}\) is the flag manifold associated to \(\widetilde{\mathcal{H}}\) with filtration \(\widetilde{F'}\). It suffices to show that \(\widetilde{\theta}\) is a nilpotent orbit.

That \(\widetilde{\theta}\) is horizontal follows from \(\mathrm{N}_iF'^p\subset F'^{p-1}\). Fix now a \(p=(p_2,\ldots,p_k)\in \mathbb{C}^{k-1}\) with \(\operatorname{Re}p_i <\alpha\). Then the map \(z\mapsto \theta(z,p)\) is a nilpotent orbit in one variable. The associated mixed Hodge structure on \(\mathcal{H}\) has weight filtration \(W(\mathrm{N}_1)\), Hodge filtration \(\exp(\sum_{i=2}^kp_i\mathrm{N}_i)\cdot F'\), and is polarized by \((S,\mathrm{N}_1)\). The induced polarized pure Hodge structure on \(P_{\mathrm{N}_1}\operatorname{gr}_h^{W(\mathrm{N}_1)}\) is exactly the filtration \(\widetilde{\theta}(p)\) polarized by \(\widetilde{S}\), showing that \(\widetilde{\theta}\) is indeed a nilpotent orbit.
\end{proof}

\begin{remark}
\label{sl2-orbit-complex-vhs}
\autoref{lemma:mhs-polarization-reduction-of-variables} for \(\mathbb{C}\)-Hodge structures follows from the combination of the papers \autocites{Cattani1986}[][]{Sabbah2022} by the same argument. Namely, \autocite{Sabbah2018} proves the existence of limiting mixed Hodge structures for a VHS on a punctured disk in this setting, \autocite{Cattani1986} proves the \(\operatorname{SL}(2)\)-orbit theorem starting from the existence of the limiting mixed Hodge structures, and \autocite[(2.11)]{Cattani1987} for \(\mathbb{C}\)-Hodge structures follows (see \autocite[(4.66)]{Cattani1986}).
\end{remark}
\subsection{Kähler forms and Chern classes}
\label{sec:org3a41ff2}
As in \autocite{Deligne1971,Saito1988}, we will define Chern classes of line bundles on \(X\) using the exponential exact sequence
\begin{align}
0\to \mathbb{Z}(1)\to \mathcal{O}_X\xrightarrow{\exp} \mathcal{O}_X^{\times}\to 1,
\end{align}
letting \(c_1\colon H^1(X,\mathcal{O}_X^{\times})\to H^2(X,\mathbb{Z}(1))\) be the connecting homomorphism; hence Chern classes of line bundles will take values in \(\mathbb{Z}(1)=2\pi i \mathbb{Z}\).

Similarly, by a Kähler class we will mean a cohomology class \(\omega\in H^2(X,\mathbb{R}(1))\) such that the real cohomology class \((2\pi i)^{-1}\omega\) can be represented by a positive closed real \((1,1)\)-form on \(X\).
\subsection{Sesquilinear pairings and triples of \(\Dt\)-modules}
\label{sec:orge5d15b6}
The main objects in this paper are polarizable \(\mathbb{C}\)-Hodge modules with \(\mathbb{Z}\)-structure. We refer to \autocite{Sabbah2018} for the theory \(\mathbb{C}\)-Hodge modules and polarizations of such, but we will briefly recall some notation and terminology here.

We will work with right \(\D\)-modules. The sheaf \(\D_X\) is an algebra with filtration \(F\) defined by the order of each differential operator, hence gives rise to the associated Rees ring
\begin{align}
R_F\D_X=\bigoplus_pF_p\D_X\cdot z^p\subseteq\D_X\otimes_{\mathbb{C}}\mathbb{C}[z,z^{-1}],
\end{align}
a graded algebra. A filtered \(\D\)-module \(\M\) similarly gives rise to a graded \(R_F\D\)-module \(R_F\M\).

The next few definitions will make sense and be needed over both \(\D\) and \(R_F\D\). We will use \(\Dt\) as a symbol to denote either, distinguishing only when necessary. Similarly, \(\Mt\) will refer to a module over \(\Dt\). We follow the notation and constructions of \autocite{Sabbah2018} throughout.

Let \(\Mt\) be a right \(\Dt_X\)-module, \(\Qt_X\) the tangent sheaf of \(X\) as a right \(\Dt_X\)-module, and \(\Qt_{X,k}=\bigwedge^k\Qt_X\). The Spencer complex of \(\Mt\) is
\begin{align}
\Sp(\Mt)=\left( \Mt\otimes_{\widetilde{\mathcal{O}_X}}\Qt_{X,n}\xrightarrow{\widetilde\delta_{\Mt}}\Mt\otimes_{\widetilde{\mathcal{O}_X}}\Qt_{X,n-1}\to\cdots\to\Mt \right),
\end{align}
where the final term \(\Mt\) is in degree \(0\). The differential \(\widetilde\delta_{\Mt}\) is defined by
\begin{align*}
m\otimes(\xi_1\wedge\cdots\wedge\xi_k)\mapsto &\sum_{i=1}^k(-1)^{i-1}(m\xi_i)\otimes(\xi_1\wedge\cdots\wedge \widehat{\xi_i}\wedge\xi_k) \\
+&\sum_{i<j}(-1)^{i+j}m\otimes([\xi_i,\xi_j]\wedge\xi_1\wedge\cdots\wedge\widehat{\xi_i}\wedge\cdots\wedge\widehat{\xi_j}\wedge\cdots\wedge\xi_k).
\end{align*}

For a proper holomorphic map \(f\colon X\to Y\), define the relative Spencer complexes \(\Sptrxy(\Dt_X)=\Sp(\Dt_X)\otimes_{\widetilde{\mathcal{O}}_{X}}\Dt_{X\to Y}\) and \(\Sptrxy(\Mt)=\Mt\otimes_{\Dt_X}\Sptrxy(\Dt_X)\) for a right \(\Dt_X\)-module \(\Mt\).
The pushforward of \(\Mt\) is defined as
\begin{align}
\Dp\Mt = \mathbf{R}f_{\ast}(\Mt\otimes^{\mathbf{L}}_{\Dt_X}\Dt_{X\to Y})=\mathbf{R}f_{\ast}(\Mt\otimes_{\Dt_X}\Sptrxy(\Dt_X));
\end{align}
we denote the cohomology modules by \(\Dpc f j\Mt\).

Let \(\D_{X,\widebar{X}}=\D_X\otimes_{\mathbb{C}}\D_{\widebar{X}}\), and let \(\cur_X\) be the sheaf of currents of degree 0 on X, as a right \(\D_{X,\widebar{X}}\)-module.
\begin{definition}
A \emph{sesquilinear pairing} between right \(\D_X\)-modules \(\M',\M''\) on a complex manifold \(X\) is a \(\D_{X,\widebar{X}}\)-linear morphism
\begin{align}
\mathfrak{s}\colon \M\otimes_{\mathbb{C}} \widebar{\M''}\to \cur_X.
\end{align}

The Hermitian adjoint \(\mathfrak{s}^{\ast}\colon \M''\otimes_{\mathbb{C}} \widebar{\M'}\to \cur_X\) is defined by \(\mathfrak{s}^{\ast}(m'', \widebar{m'})= \widebar{\mathfrak{s}(m', \widebar{m''})}\).
\end{definition}

Let \(f\colon X\to Y\) be a proper holomorphic map. To define the pushforward of \(\mathfrak{s}\) along \(f\), let \(\Sptrxyb=\Sptrxy(\D_X)\otimes_{\mathbb{C}} \widebar{\Sptrxy(\D_X)}\), and note that we can compute the pushforward of \(\cur_X\) as
\begin{align}
\Dpb f\cur_X=\mathbf{R}f_{\ast}(\cur_X\otimes_{\D_{X,\widebar X}}\Sptrxyb),
\end{align}
which then comes with a natural morphism \(\Dpb f\cur_X\xrightarrow{\int_f}\cur_Y\) given by integration of currents.

Now we get morphisms
\begin{align}
&(\M'\otimes_{\D_X}\Sptrxy(\D_X))\otimes_{\mathbb{C}}( \widebar{\M''}\otimes_{\D_X}\Sptrxy(\D_X)) \\
\xrightarrow\sim &(\M' \otimes_{\mathbb{C}} \widebar{\M''})\otimes_{\D_{X,\widebar{X}}}\Sptrxyb\nonumber\\
\xrightarrow{\mathfrak{s}\otimes\id}&\cur_X\otimes_{\D_{X,\widebar{X}}}\Sptrxyb.\nonumber
\end{align}
Note that the first map, changing the order of the tensor product, does not introduce any sign since \(\M''\) is just a single object. However, when we later deal with pairings of complexes, this map will give rise to a sign that we must take into account.

Applying \(\mathbf{R}f_{\ast}\), composing with the integration map \(\int_f\) and taking cohomology objects gives, for each \(k\), the pushforward pairing
\begin{align}
\Dpb f^{(k,-k)}\mathfrak{s}\colon \Dpc f k \M'\otimes_{\mathbb{C}} \widebar{\Dpc f {-k}\M''}\to\Dpb f^{(0)}\cur_X\xrightarrow{\int_{f}}\cur_Y.
\end{align}

We will see later that it is more natural to add a sign to this definition.
\begin{definition}
The signed pushforward of \(\mathfrak{s}\) is \(\Tpc f {k,-k} \mathfrak{s}=\epsilon(k)\Dpb f^{(k,-k)}\mathfrak{s}\), where as usual \(\epsilon(k)=(-1)^{k(k-1)/2}\).
\end{definition}

From now on, we specifically work in the filtered setting and let \(\Dt_X=R_F\D_X\). Hodge modules will be certain \(\Dt\)-triples.
\begin{definition}
A triple on \(X\) is the data \(\T=(\Mt',\Mt'',\mathfrak{s})\), where \(\Mt',\Mt''\) are \(\Dt_X\)-modules and \(\mathfrak{s}\) is a sesuilinear pairing between \(\M',\M''\), where in general \(\M=\Mt/(z-1)\Mt\) is the associated \(\D_X\)-module.
\end{definition}
The constructions above give the cohomological pushforwards
\begin{align}
\Tpc f k \T=(\Dpc f k \Mt',\Dpc f {-k} \Mt'',\Tpc f {k,-k} \mathfrak{s}).
\end{align}

To construct a total pushforward complex \(\Tp f \T\) of which these are the cohomology objects, we follow \autocite[Section 12.7.d]{Sabbah2018} and work with the relative \(C^{\infty}\) Spencer complex \(\Sptrxyinf(\Mt)\) as a flabby resolution of \(\Sptrxy(\Mt)\), so we can compute the pushforward directly.
Recall from \autocite[8.4.13]{Sabbah2018} that the \(\mathcal{C}^{\infty}\) Spencer complex \(\widetilde{\Sp}_X^{\infty,i,j}\) is the double complex with terms \(\Qt_{X,i}\otimes_{\Ot_X}\Et^{(0,j)}\) in bidegree \((-i,j)\), where \(\Et^{(0,j)}\) is the sheaf of \(\mathcal{C}^{\infty}\) differential forms of type \((0,j)\). The differentials are given by
\begin{align*}
\Qt_{X,i}\otimes_{\Ot_X}\Et^{(0,j)}&\xrightarrow{\widetilde{\delta}'^{\infty}} \Qt_{X,i-1}\otimes_{\Ot_X}\Et^{(0,j)} \\
\xi\otimes\phi &\mapsto \widetilde{\delta}(\xi_i)\otimes\phi+\xi\lrcorner \widetilde{d}'\phi
\end{align*}
and \(\widetilde{d}''\), where \(\widetilde{d}',\widetilde{d}''\) are the usual holomorphic and antiholomorphic exterior derivatives on differential forms.

The relative \(\mathcal{C}^{\infty}\) Spencer resolution of \(\Mt\) can now be written as
\begin{align}
\Sptrxyinf(\Mt)=\Mt\otimes_{\Ot_X}\widetilde{\Sp}_X^{\infty,\bullet}\otimes_{\Ot_X}\Dt_{X\to Y},
\end{align}
where we take the simple complex associated to the double complex \(\widetilde{\Sp}_X^{\infty,\bullet,\bullet}\) defined above.

Thus we will let \(\widetilde K^{\bullet}=f_{\ast}\Sptrxyinf(\Mt)\), and define \(\Tp f \T\) as the complex of triples with terms
\begin{align}
\left(\kt'^k,\kt''^{-k},\epsilon(k)\int_ff_{\ast}\mathfrak{s}\right),
\end{align}
with the pairing \(\mathfrak{s}\) implicitly extended to \(C^{\infty}\)-coefficients.

The following is \autocite[Corollary 12.7.37]{Sabbah2018}
\begin{lemma}
Consider proper maps \(X\xrightarrow f Y\xrightarrow g Z\) and a triple \(\T\) on \(X\). Then the complexes \(\Tp {(g\circ f)} \T\) and \(\Tp g \Tp f \T\) (more precisely, the simple complex associated to the double complex that naturally computes the latter) are quasi-isomorphic, and in particular the functors \(\Tp f\) admit Leray spectral sequences in the category of triples.
\end{lemma}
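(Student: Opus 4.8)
The plan is to reduce the claimed quasi-isomorphism to three separate transitivity statements: one for each of the two underlying $\Dt_X$-modules $\Mt'$ and $\Mt''$, and one for the sesquilinear pairing $\mathfrak s$, the latter including the bookkeeping of the normalizing signs $\epsilon(k)$. A useful feature of the setup is that $\Tp f$ is computed from the relative $C^{\infty}$ Spencer complexes $\Sptrxyinf(\Mt)$, which are flabby and hence $f_{\ast}$-acyclic; consequently every pushforward in sight can be realized by an honest complex of sheaves, and the comparison will be produced directly at the level of complexes (through the simple complex of the evident double complex) rather than only in a derived category.

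For the underlying modules the content is the standard transitivity of the $\Dt$-module pushforward. First I would use the transitivity isomorphism of transfer modules,
\begin{align*}
\Dt_{X\to Z}\simeq \Dt_{X\to Y}\otimes^{\mathbf L}_{\Dt_Y}\Dt_{Y\to Z},
\end{align*}
which at the level of Spencer resolutions is nothing but the associativity of the relative Spencer construction $\Sptrxy$. Combined with $\mathbf R(g\circ f)_{\ast}=\mathbf Rg_{\ast}\circ\mathbf Rf_{\ast}$ and the flabbiness noted above, this identifies the module pushforward along $g\circ f$ with the iterated one, compatibly with the double-complex bigrading: concretely, resolving $f_{\ast}\Sptrxyinf(\Mt)$ by the relative $C^{\infty}$ Spencer complex for $g$ and applying $g_{\ast}$ recovers the one-step relative $C^{\infty}$ Spencer computation of the pushforward along $g\circ f$, after reindexing the bigrading. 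The same argument applies verbatim to $\Mt'$ and $\Mt''$.

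The pairing is where the essential point lies, and I expect the signs to be the main obstacle. The pushforward of $\mathfrak s$ is built from fibre integration of currents, so the key geometric input is the transitivity $\int_{g\circ f}=\int_g\circ\int_f$, a Fubini statement identifying $\Dpb{(g\circ f)}\cur_X\to\cur_Z$ with the two-step composite. On $\Tp{(g\circ f)}\T$ the pairing in cohomological degree $k$ carries the factor $\epsilon(k)$, whereas iterating $\Tp g\Tp f$ writes $k=i+j$ and contributes $\epsilon(i)\epsilon(j)$ from the two separately signed pushforwards, together with a Koszul sign $(-1)^{ij}$ that appears when one reorders the tensor factors to assemble the total sesquilinear pairing on the simple complex --- exactly the sign flagged in the construction above, which is invisible for a single object but reappears for complexes. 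The two contributions match precisely because of the identity
\begin{align*}
\epsilon(i+j)=\epsilon(i)\,\epsilon(j)\,(-1)^{ij},
\end{align*}
so the $\epsilon$-normalization is the unique one making the signed pushforward transitive on the nose. Verifying this cleanly amounts to tracking all signs introduced in passing to the simple complex, and this is the delicate step.

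Finally, the assertion about Leray spectral sequences is then formal. Once $\Tp{(g\circ f)}\T$ is identified with the simple complex of a double complex whose rows and columns compute $\Tp f$ and $\Tp g$, the two filtration spectral sequences of that double complex are the Leray spectral sequences. Since triples form an additive category in which the kernels, cokernels and filtrations of the complexes involved make sense, these spectral sequences live in the category of triples, as claimed.
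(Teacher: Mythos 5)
Your proposal is correct and follows essentially the same route as the paper, which itself defers the details to \autocite[Corollary 12.7.37]{Sabbah2018} and only sketches the key point: the Koszul sign \((-1)^{ij}\) arising from the tensor-swap when \(\Tp g\) is applied to the complex of triples \(\Tp f\T\), absorbed by the identity \(\epsilon(i+j)=\epsilon(i)\epsilon(j)(-1)^{ij}\). Your decomposition into module transitivity, Fubini for current integration, and this sign bookkeeping matches the paper's account exactly.
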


We refer to \autocite{Sabbah2018} for the details of the proof, but since this commutation, which essentially boils down to a sign comparison, plays a crucial role in this paper (specifically in \autoref{lemma:P1-factorization}), let us explain the role of \(\epsilon(k)\) in this computation.

Write \(h=g\circ f\). To compute the pushforward \(\Tp h \T\), one considers, for \(\Mt=\Mt'\) or \(\Mt''\), the pushforwards
\begin{align}
\kt^{\bullet}=h_{\ast}\left(\Mt\otimes_{\Dt_X}\otimes\Sp_{X\to Z}^{\infty}(\Dt_X)\right),
\end{align}
together with the pairing between their underlying \(\D\)-modules defined from \(\mathfrak{s}\), taking values in
\begin{align}
h_{\ast}\left( \cur_X\otimes_{\Dt_{X,\widebar{X}}}\Sp^{\infty}_{X,\widebar{X}\to Y,\widebar{Y}}(\Dt_{X,\widebar{X}}) \right).
\end{align}
To get the final pushforward pairing, compose this with the integration morphism \(\int_h\).

To compute the double pushforward \(\Tp g \Tp f \T\), however, one must consider the double complexes
\begin{align}
\kt^{\bullet,\bullet}=g_{\ast}\left[ f_{\ast}\left( \Mt\otimes_{\Dt_X}\Sptrxyinf(\Dt_X) \right)\otimes_{\Dt_Y}\Sp^{\infty}_{Y\to Z} \right],
\end{align}
for \(\Mt=\Mt',\Mt''\), and the pairing between them induced by \(\mathfrak{s}\) taking values in
\begin{align}
g_{\ast}\left[ f_{\ast}\left( \cur_X\otimes_{\Dt_{X,\widebar{X}}}\Sptrxybinf(\Dt_{X,\widebar{X}}) \right)\otimes_{\Dt_{Y,\widebar{Y}}}\Sp^{\infty}_{Y,\widebar{Y}\to Z,\widebar{Z}} \right].
\end{align}

Recall now that the pushforward of \(\mathfrak{s}\), say along \(h\), is defined from \(\mathfrak{s}\colon\M'\otimes\M''\to\cur_X\) by combining the reordering of tensor products
\begin{align}
\M'\otimes\Sp(\D_X)\otimes \widebar{\M''\otimes\Sp(\D_X)}\xrightarrow\sim \M'\otimes \widebar{\M''}\otimes\Sp_{X,\widebar{X}\to Z,\widebar{Z}}(\D_{X,\widebar{X}})
\end{align}
with \(\mathfrak{s}\) and \(h_{\ast}\). For a single triple \(\T\), there is no sign in this tensor swap morphism. But in the computation of \(\Tp g\Tp f\T\), the pushforward \(\Tp g\) is applied to the complex of triples \(\Tp f\T\), and the tensor swap operation thus introduces a sign to the effect of \((-1)^{ij}\) on the pairing \(\mathfrak{s}_{ij}\colon K'^{i,j}\otimes \widebar{K''^{-i,-j}}\to\cur_Z\). The introduction of the sign \(\epsilon\), which satisfies \(\epsilon(i+j)=\epsilon(i)\epsilon(j)(-1)^{ij}\), ensures consistency between \(\Tp h\T\) and \(\Tp g\Tp f \T\).
\subsection{Complex Hodge modules}
\label{sec:org6cd7cae}
Consider as before a triple \(\T=(\Mt',\Mt'',\mathfrak{s})\) on \(X\). The Hermitian dual is the triple \(\T^{*}=(\Mt'',\Mt',\mathfrak{s}^{*})\), where \(\mathfrak{s}^{*}\) is the Hermitian adjoint pairing defined above. The Tate twist by \(l\in \mathbb{Z}\) is \(\T(l)=(\Mt'(l),\Mt''(-l),\mathfrak{s})\), where in general \(\Mt(l)=z^l\Mt\) (recall that \(\Mt\) is a module over \(\Dt= R_F\D_X=\bigoplus_p F_p\D_X\)).

A polarized Hodge module of weight \(w\) is the data of a holonomic object of \(\Dt\mathrm{-Triples}(X)\) and a morphism \(\mathrm{S}\colon\T\to\T^{*}(-w)\), the polarization, satisfying certain inductive conditions in terms of nearby and vanishing cycles (see\autocite[Definition 14.2.2]{Sabbah2018} for the full definition). In particular, \(\T\) is required to be strict, hence \(\Mt',\Mt''\) correspond to filtered \(\D_X\)-modules, and \(\mathrm{S}\) is an isomorphism of triples.

We will use the notation \((M,\mathrm{S})\) for a polarized Hodge module and, by strictness, consider \(M\) to be a triple \(((\M',F^{\bullet}\M'),(\M'',F^{\bullet}\M''),\mathcal{S})\). Note that \(S\) defines an isomorphism between \((\M',F^{\bullet}\M')\)  and \((\M'',F^{\bullet}\M'')(w)\). Let \((\M,F)=(\M',F)\); through this isomorphism, \((M,\mathrm{S})\) is isomorphic to a polarized Hodge module of the form \(((\M,F),(\M,F)(w),\mathcal{S})\) with polarization \((\id,\id)\), where \(\mathcal{S}\) is defined by composing \(\mathfrak{s}\) and \(\mathrm{S}\) appropriately. In this way, we will occasionally refer to \(\mathcal{S}\) as a polarization.

For a proper holomorphic map \(f\colon X\to Y\), let \(\Tp f {(M,\mathrm{S})}=(\Tp f M, \Tp f \mathrm{S})\) in the category of triples. The following now follows directly from the results of the previous section.
\begin{lemma}
\label{lemma:sign_compatibility_composition}
Let \(X\xrightarrow f Y \xrightarrow g Z\) be proper holomorphic maps, and \((M,\mathrm{S})\) a polarized Hodge module of weight \(w\) on \(X\). Then the complexes \(\Tp {(g\circ f)} {(M,\mathrm{S})}\) and \(\Tp g \Tp f {(M,\mathrm{S})}\) are quasi-isomorphic.
\end{lemma}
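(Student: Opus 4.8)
The plan is to deduce this from the quasi-isomorphism of \emph{underlying triples} furnished by the preceding lemma, upgraded so as to respect the polarization via functoriality. Recall that a polarized Hodge module $(M,\mathrm{S})$ is the datum of a triple $\T$ together with a morphism of triples $\mathrm{S}\colon\T\to\T^{*}(-w)$, and that by definition $\Tp f{(M,\mathrm{S})}=(\Tp f\T,\Tp f\mathrm{S})$, the functor being applied to the underlying triple and to the morphism separately. Hence it suffices to exhibit a comparison quasi-isomorphism $\Phi$ of complexes of triples between $\Tp{(g\circ f)}\T$ and $\Tp g\Tp f\T$ which intertwines the two pushforwards of $\mathrm{S}$.

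First I would apply the preceding lemma to the underlying triple $\T$ to obtain $\Phi$. The essential point is that that lemma is already a statement about triples, so $\Phi$ matches up the pushforward sesquilinear pairings; this is exactly where the sign $\epsilon$ does its work, through the identity $\epsilon(i+j)=\epsilon(i)\epsilon(j)(-1)^{ij}$ recorded above. Moreover the comparison is constructed explicitly from the associated simple complexes, and is therefore natural in the input triple.

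Next I would record that $\Tp f$ commutes with the two operations entering a polarization: the Tate twist $\Tp f{(\T(l))}=(\Tp f\T)(l)$ and the Hermitian dual $\Tp f{(\T^{*})}\simeq(\Tp f\T)^{*}$, both at the level of the $C^{\infty}$-Spencer complexes used to define $\Tp f$, the latter once more relying on the sign conventions already in place. From these identifications it follows that $\Phi$ is compatible with duality and twist, so that it relates $\Tp{(g\circ f)}{(\T^{*}(-w))}$ and $\Tp g\Tp f{(\T^{*}(-w))}$ through the instance of $\Phi$ at $\T^{*}(-w)$. Applying naturality of $\Phi$ to the morphism $\mathrm{S}\colon\T\to\T^{*}(-w)$ then yields a commuting square, which together with the previous step shows that $\Phi$ carries $\Tp{(g\circ f)}\mathrm{S}$ to $\Tp g\Tp f\mathrm{S}$; thus $(\Phi,\Phi)$ is the sought quasi-isomorphism $\Tp{(g\circ f)}{(M,\mathrm{S})}\simeq\Tp g\Tp f{(M,\mathrm{S})}$.

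I expect the only genuine subtlety to be a sign check, namely confirming that $\Phi$ respects Hermitian duality and the $\epsilon$-twisted pushforward pairings with no degree-dependent sign discrepancy. This is, however, precisely the consistency that the introduction of $\epsilon$ was designed to guarantee in the preceding discussion, so once the naturality of $\Phi$ is granted the argument reduces to the bookkeeping already carried out for the triple-level statement, and the lemma follows directly.
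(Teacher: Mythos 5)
Your argument is correct and is essentially the paper's own: the paper offers no separate proof, stating only that the lemma ``follows directly from the results of the previous section,'' i.e.\ from the triple-level composition compatibility with the sign \(\epsilon(k)\). Your fleshing-out --- applying that comparison to the underlying triple and invoking its naturality together with compatibility with Hermitian duality and Tate twist to transport \(\mathrm{S}\) --- is exactly the intended deduction.
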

\subsubsection{A remark on polarizations in terms of perverse sheaves}
\label{sec:org1bcedfc}
The compatibility in \autoref{lemma:sign_compatibility_composition} is crucial for arriving at the correct signs for the polarizations in \autoref{lemma:P1-factorization} below, which is the technical heart of the new contributions of this paper. However, in the original formulation of the theory of Hodge modules by Saito \autocite{Saito1988}, where polarizations are defined in terms of perverse sheaves, this basic compatibility seems difficult to prove.

For the sake of the interested reader, we will point towards the source of the difficulties below. The reader that is only interested in the proof of \autoref{thm:decomp} can safely skip this section.

To explain the difficulty, recall that in constructing the pushforward of a sesquilinear pairing, a sign is introduced in the tensor product reordering isomorphism
\begin{align}
\M'\otimes\Sp(\D_X)\otimes \widebar{\M''\otimes\Sp(\D_X)}\xrightarrow\sim \M'\otimes \widebar{\M''}\otimes\Sp_{X,\widebar{X}\to Z,\widebar{Z}}(\D_{X,\widebar{X}})
\end{align}
when \(\M''\) is a complex of \(\D\)-modules, as described in the previous section. Consider the composition
\begin{align}
&f_{\ast}\left(\M'\otimes\Sp(\D_X)\right)\otimes f_{\ast}\left(\widebar{\M''\otimes\Sp(\D_X)}\right)\\
\to &{f_{\ast}}\left(\M'\otimes\Sp(\D_X)\otimes \widebar{\M''\otimes\Sp(\D_X)}\right)\nonumber\\
\xrightarrow\sim &f_{\ast}\left(\M'\otimes \widebar{\M''}\otimes\Sp_{X,\widebar{X}\to Z,\widebar{Z}}(\D_{X,\widebar{X}})\right)\nonumber
\end{align}
This represents a morphism \(\Dp f \M' \otimes \widebar{\Dp f \M''}\to \Dpb f (\M'\otimes \widebar{\M''})\).

In Saito's formulation, a polarization of a (in this case rational) Hodge module of weight \(w\) takes the form of a pairing
\begin{align}
S\colon K\otimes K\to a_X^{!}\mathbb{Q}(w),
\end{align}
where \(K\) is the \(\mathbb{Q}\)-perverse sheaf underlying the given Hodge module, and \(a_X\colon X\to \mathrm{pt}\) is the constant map. The pushforward of \(S\) along \(f\colon X\to Y\) as a pairing \(\sh{H}f_{\ast}S\colon\mathbf{R}f_{\ast}K\otimes \mathbf{R}f_{\ast}K\to a_Y^{!}\mathbb{Q}(w)\), in Saito's notation, is then defined as a composition (see \autocite[Lemme 5.2.9]{Saito1988}) 
\begin{align}
\mathbf{R}f_{\ast}K\otimes \mathbf{R}f_{\ast}K\to &\mathbf{R}f_{\ast}(K\otimes K) \\
\xrightarrow{\mathbf{R}f_{\ast}S} &\mathbf{R}f_{\ast}a_X^{!}\mathbb{Q}(w)\nonumber\\
\xrightarrow{ \operatorname{Tr_{X/Y}}} &a_Y^{!}\mathbb{Q}(w)\nonumber
\end{align}

To define the first map in this sequence, however, one must choose resolutions \(I(K)\) and \(I(K\otimes K)\) of \(K\) and \(K\otimes K\) respectively, adapted to the pushforward \(f_{\ast}\), and define a map \(I(K)\otimes I(K)\to I(K\otimes K)\), then define \(\mathbf{R}f_{\ast}K\otimes \mathbf{R}f_{\ast}K\to \mathbf{R}f_{\ast}(K\otimes K)\) as the induced map \(f_{\ast}I(K)\otimes f_{\ast}I(K)\to f_{\ast}I(K\otimes K)\). In the MHM project, Spencer resolutions are used for this purpose, as above.

Saito chooses to use the Godement resolution (constructed from sheaves of discontinuous sections of the underlying constructible sheaves) \autocite[Section 2.3, particularly 2.3.4 and 2.3.7]{Saito1988}. Assembling the Godement resolutions of the individual constructible complexes underlying \(K\) into a double complex, and taking the associated simple complex, gives a resolution of \(K\) as a complex of constructible sheaves. Applying the same procedure to \(K\otimes K\) gives the desired resolutions \(I(K)\) and \(I(K\otimes K)\), and one can indeed construct a map \(I(K)\otimes I(K)\to I(K\otimes K)\) as desired. However, following Deligne's sign rules for tensor products of complexes, as Saito does, one sees that this map must involve some signs that depend on the degrees of the terms of \(I(K)\) \emph{as a complex of constructible sheaves}.

Let us try to give an indication of where the issues lie. The following will necessarily be a sketch.

Suppose \(A\) and \(B\) are constructible sheaves. If \(S\) is some bilinear pairing between \(A\) and \(B\), we want to compare \(\sh H(g\circ f)_{\ast}S\) and \(\sh H g_{\ast}\sh H f_{\ast}S\). To do so, we must understand how the following maps compare:
\begin{align}\label{eq:tensor-1}
&\mathbf{R}(g\circ f)_{\ast} A \otimes \mathbf{R}(g\circ f)_{\ast} B\to\mathbf{R}(g\circ f)_{\ast}(A\otimes B)\\
&\mathbf{R}g_{\ast}\mathbf{R}f_{\ast} A\otimes\mathbf{R}g_{\ast}\mathbf{R}f_{\ast}B\to \mathbf{R}g_{\ast}(\mathbf{R}f_{\ast}A\otimes \mathbf{R}f_{\ast}B)\to \mathbf{R}g_{\ast}\mathbf{R}f_{\ast}(A\otimes B)\label{eq:tensor-2}
\end{align}

For any constructible sheaf \(C\), let \(G^{\bullet}C\) be the associated Godement resolution. To understand the maps in equations \eqref{eq:tensor-1} and \eqref{eq:tensor-2} in concrete terms, one ends up looking at the following diagram for various values of \(m\) and \(n\):
\begin{equation*}
\begin{tikzcd}[column sep = -5em]\label{diag:signs}
(g\circ f)_{\ast}G^{n}A\otimes (g\circ f)_{\ast} G^{n}B\ar[rr]&& (g\circ f)_{\ast}G^{m+n}(A\otimes B)\\
\bigoplus_{i+j=n}g_{\ast}G^{i}f_{\ast}G^jA\otimes\bigoplus_{k+l=m}g_{\ast}G^kf_{\ast}G^lB
\ar[u]\ar[dr]
&&\bigoplus_{\begin{subarray}{l} i+j=m\\ k+l=n\end{subarray}}g_{\ast}G^{i+k}f_{\ast}G^{j+l}(A\otimes B)\ar[u] \\
&\bigoplus_{\begin{subarray}{l} i+j=m\\ k+l=n\end{subarray}}g_{\ast}G^{i+k}(f_{\ast}G^jA\otimes f_{\ast}G^lB)\ar[ur]
\end{tikzcd}
\end{equation*}
Here the two vertical maps are the natural quasi-isomorphisms. The top horizontal map represents \eqref{eq:tensor-1}, while the composition across the bottom three terms represents \eqref{eq:tensor-2}.

Now in general, one can check that for complexes \(K, L\) of constructible sheaves, the natural map \(\phi\colon G^{\bullet}K\otimes G^{\bullet}L\to G^{\bullet}(K\otimes L)\) looks like \(\phi(\alpha\otimes\beta)=(-1)^{an}\alpha\otimes\beta\) for \(\alpha\in G^a K^m\) and \(\beta\in G^bL^n\), where on both sides we consider \(\alpha\) and \(\beta\) as discontinuous sections of the constructible sheaves making up \(K\) and \(L\) respectively.
On a local section
\begin{align}
\alpha\otimes\beta\in g_{\ast}G^{i}f_{\ast}G^jA\otimes g_{\ast}G^kf_{\ast}G^lB,
\end{align}
of the direct sum in the middle left of the diagram, a computation now shows that this diagram commutes up to a sign of \((-1)^{il}\).

Comparing the signs in the pairings \(\sh Hg_{\ast}\sh Hf_{\ast}S\) and \(\sh H (g\circ f)_{\ast}S\) on \(\mathbf{R}(g\circ f)_{\ast}K\) for a composition \(X\xrightarrow f Y \xrightarrow g Z\) and perverse sheaf \(K\) underlying a Hodge module, one now sees that there are certain signs involved that depend on the constructible degrees of the objects involved. These signs do not seem to be cancelled out by anything else, and so it is not the case that \(\sh Hg_{\ast}\sh Hf_{\ast}S=\sh H (g\circ f)_{\ast}S\) in general, even on terms of the Leray spectral sequence, and there does not seem to be any sign correction that one can insert on the level of Hodge modules to fix this, since such a correction would be in terms of the perverse t-structure, not the constructible one. Even if these ``constructible signs'' were cancelled out, one still needs to explain the discrepancy factor \((-1)^{ij}\) between the sign factors \(\epsilon(i+j)\) and \(\epsilon(i)\epsilon(j)\) that show up in the natural polarizations on \(^{p}R^{i+j} (g\circ f)_{\ast}K\) and \(^{p}R^{i}g_{\ast}{}^{p}R^jf_{\ast}K\) respectively. This sign, defined in terms of the perverse t-structure, does not seem to result from the tensor product maps \(\mathbf{R}f_{\ast}K\otimes \mathbf{R}f_{\ast}K\to \mathbf{R}f_{\ast}(K\otimes K)\) like their analogues in the MHM project approach to polarizations do.
\subsection{Hodge modules with integral structure}
\label{sec:org3b1eb0c}
As in \autocite[Definition 2.2]{Schnell2015a}, a \(\mathbb{C}\)-Hodge module with integral structure will consist of the data \(M=(M,K_{\mathbb{Z}},\alpha)\) where \(M\) is a polarizable \(\mathbb{C}\)-Hodge module with quasi-unipotent local monodromy, \(K_{\mathbb{Z}}\) is a constructible complex with integral coefficients, and \(\alpha\) is a comparison isomorphism \(\alpha\colon K_{\mathbb{Z}}\otimes_{\mathbb{Z}}\mathbb{C}\xrightarrow\sim \operatorname{DR}M\), where \(\operatorname{DR}M\) is the de Rham complex of the \(\mathscr D\)-module underlying \(M\). Hodge modules with rational structures are defined similarly, with a rational perverse sheaf \(K_{\mathbb{Q}}\) in place of \(K_{\mathbb{Z}}\).
Let \(\hmq X n\) be the category of Hodge modules with rational structures on \(X\) of weight \(n\). Let \(\hmz X n\) be the full subcategory of \(\hmq X n\) consisting of those Hodge modules \(M\) such that \(K_{\mathbb{Q}}\) is of the form \(K_{\mathbb{Z}}\otimes_{\mathbb{Z}}\mathbb{Q}\) for some integral constructible complex \(K_{\mathbb{Z}}\) (note that \(K_{\mathbb{Z}}\) is not part of the data!). We say that such an \(M\) \emph{admits} an integral structure.

Note that \(\mathbb{Z}\)-structures are preserved under the usual operations in the theory of Hodge modules: The functor \((-)\otimes_{\mathbb{Z}}\mathbb{Q}\colon D^b_c(X,\mathbb{Z})\to D^b_c(X,\mathbb{Q})\) commutes with the six-functor formalism for constructible complexes, and similarly for extending to complex coefficients. However, direct sum decompositions of rational structures will not necessarily be compatible with underlying integral structures on the nose, but only up to finite index.

For nearby and vanishing cycles, suppose \(f\colon X\to \Delta\) is a holomorphic function, submersive over the punctured disk \(\Delta^{\ast}\). Let \(\exp\colon \mathbb{H}\to \Delta\) be the composition of the universal covering of \(\Delta^{\ast}\) by the upper half plane with the inclusion \(\Delta^{\ast}\to \Delta\), \(k\colon \widetilde{X}\to X\) the pullback of \(\exp\) along \(f\), and \(i\colon X_0=f^{-1}(0)\to X\). Recall that for a constructible complex \(C\), the nearby cycles along \(f\) are defined as
\begin{align}
\psi_fC=i^{-1}\mathbf{R}k_{\ast}(k^{-1}C)
\end{align}
and the vanishing cycles \(\phi_fC\) as the cone of the canonical map \(i^{-1}C\to\psi_fC\). This construction goes through whether \(C\) has coefficients in \(\mathbb{Z}\) or \(\mathbb{Q}\), and the construction commutes with \((-)\otimes_{\mathbb{Z}}\mathbb{Q}\). One can check that if \(C\) is a \(\mathbb{Q}\)-complex which admits an integral structure, then the unipotent nearby and vanishing cycles \(\psi_{f,1}C\) and \(\phi_{f,1}C\) also admit integral structures. More precisely, the usual eigenspace decomposition over \(\mathbb{Q}\) induces a decomposition over \(\mathbb{Z}\) up to finite index (needed to clear the denominators of the rational numbers involved in the maps in the decomposition).

There's a notion of a perverse t-structure on \(D_c^b(X,\mathbb{Z})\) which maps to the usual one on \(D^b_c(X,\mathbb{Q})\) under tensor product \autocite[Section 3.3]{Beilinson1982}, so we can take perverse cohomology objects in \(D^b_c(X,\mathbb{Z})\). In particular, if \(C\in D^b_c(X,\mathbb{Q})\) admits an integral structure and \(f\colon X\to Y\) is a proper morphism, then each \({}^pR^if_{\ast}C\) admits an integral structure. Similarly, we can take intermediate extensions of local systems with integral coefficients, compatible with the usual intermediate extension functor for rational local systems.

We recall the structure theorem for Hodge modules, see \autocite[Theorem 16.2.1 (Structure theorem)]{Sabbah2018} and the S-decomposability property in \autocite[Theorem 14.2.17 (Main properties of polarizable Hodge modules)]{Sabbah2018}. Note that while the proof of the structure theorem uses the direct image theorem, it only does so in the specific context of blowups, which are projective morphisms. Hence we are free to use the structure theorem here.
\begin{theorem}
\label{thm:hm-structure}
Any polarizable pure Hodge module \(M\) admits a direct sum decomposition \(M=\bigoplus_Z M_Z\) where each \(M_Z\) is a polarizable pure Hodge module of the same weight as \(M\) which has strict support on an irreducible closed subvariety \(Z\subset X\). Note that the sum is taken over distinct subvarieties.

For each \(M_Z\), there exists a Zariski open subset \(U\subset Z\) such that \(M_Z|_U\) corresponds to a polarizable \(\mathbb{C}\)-VHS on \(U\), and \(M_Z\) is the intermediate extension of this VHS as a pure Hodge module.

Finally, \(M\) admits an integral structure if and only if each \(M_Z\) does, and a given \(M_Z\) admits an integral structure if and only if the corresponding VHS \(M_Z|_U\) does.
\end{theorem}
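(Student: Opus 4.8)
The plan is to take the first two assertions of the statement as known, since they are exactly the results recalled from \autocite{Sabbah2018} (the structure theorem and $S$-decomposability); the content that remains to be proved is the third assertion, characterizing when integral structures exist. I would treat the two stated equivalences separately, establishing the equivalence for a single strict-support summand $M_Z$ first, because the equivalence for $M$ will be reduced to it.

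For the per-summand equivalence, fix $Z$ and a Zariski-open $U\subseteq Z$ over which $M_Z|_U$ is the VHS, with complex local system $L_{\mathbb{C}}$. The de Rham functor is exact for the perverse $t$-structures and commutes with intermediate extension and with restriction to opens, so $\operatorname{DR}M_Z\simeq IC_Z(L_{\mathbb{C}})$ and $(\operatorname{DR}M_Z)|_U\simeq L_{\mathbb{C}}[\dim Z]$. If the VHS admits an integral structure, i.e. $L_{\mathbb{C}}\simeq L_{\mathbb{Z}}\otimes_{\mathbb{Z}}\mathbb{C}$ for a $\mathbb{Z}$-local system $L_{\mathbb{Z}}$ on $U$, then the integral intermediate extension $IC_Z(L_{\mathbb{Z}})$ is an integral structure on $M_Z$, using the compatibility of $j_{!*}$ over $\mathbb{Z}$ with $(-)\otimes_{\mathbb{Z}}\mathbb{C}$ recalled above. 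Conversely, if $K_{\mathbb{Z}}$ is an integral structure on $M_Z$, then $K_{\mathbb{Z}}|_U$ is an integral structure on $L_{\mathbb{C}}[\dim Z]$; since $(-)\otimes_{\mathbb{Z}}\mathbb{C}$ is faithful, $K_{\mathbb{Z}}|_U$ is concentrated in a single degree up to torsion, and its torsion-free part is a $\mathbb{Z}$-local system $L_{\mathbb{Z}}$ with $L_{\mathbb{Z}}\otimes_{\mathbb{Z}}\mathbb{C}\simeq L_{\mathbb{C}}$, giving an integral structure on the VHS.

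For the equivalence for $M$, the implication ``each $M_Z$ integral $\Rightarrow$ $M$ integral'' is immediate, since $\operatorname{DR}$ and $(-)\otimes_{\mathbb{Z}}\mathbb{C}$ commute with finite direct sums, so $\bigoplus_Z K_{Z,\mathbb{Z}}$ is an integral structure on $M=\bigoplus_Z M_Z$. For the converse I would peel off the summands one at a time using the canonical support decomposition. Concretely, pass to the semisimple perverse sheaf $K_{\mathbb{Q}}=K_{\mathbb{Z}}\otimes_{\mathbb{Z}}\mathbb{Q}$ underlying $M$, choose $Z$ minimal among the occurring supports, and apply the canonical perverse subobject of sections supported on $Z$; over $\mathbb{Q}$ this extracts exactly $\operatorname{DR}M_Z$, since minimality rules out any strictly smaller occurring support. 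As the perverse $t$-structure and this support functor exist over $\mathbb{Z}$ and are compatible with $(-)\otimes_{\mathbb{Z}}\mathbb{Q}$, applying them to $K_{\mathbb{Z}}$ produces an integral structure on $M_Z$, while the perverse quotient gives an integral structure on $\bigoplus_{Z'\neq Z}M_{Z'}$; inducting on the number of supports yields an integral structure on each summand. Only the existence of some integral structure is claimed, so the fact that these operations realize the decomposition merely up to finite index (as flagged earlier in this section) causes no difficulty.

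The main obstacle is precisely this converse for $M$. One is tempted to produce $K_{Z,\mathbb{Z}}$ by restricting $K_{\mathbb{Z}}$ to a Zariski-open of $Z$ on which no other summand is present and then intermediate-extending; but when the supports are nested, say $Z\subsetneq Z'$, the complex $\operatorname{DR}M_{Z'}=IC_{Z'}(L')$ has nonzero stalks along $Z$, so no such open exists and restriction cannot isolate $M_Z$. This is what forces the use of the canonical support filtration together with the integral perverse $t$-structure rather than a naive generic-restriction argument. The remaining verifications—exactness of $\operatorname{DR}$, its commutation with $j_{!*}$, and faithful exactness of $(-)\otimes_{\mathbb{Z}}\mathbb{C}$—are standard and were recalled in this section.
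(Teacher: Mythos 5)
Your proposal is correct in substance, but the converse direction for \(M\) takes a genuinely different route from the paper's. You peel off a \emph{minimal} support by applying the maximal perverse subobject supported on \(Z\), i.e.\ \({}^p\sh H^0(i_{Z\ast}i_Z^!(-))\), over \(\mathbb{Z}\), and induct using exactness of \((-)\otimes_{\mathbb{Z}}\mathbb{Q}\); this works, and everything you need (the integral perverse t-structure, compatibility of \(i^!\) with \(\otimes\mathbb{Q}\)) is already invoked elsewhere in this section. The paper instead argues in two stages: first it shows the decomposition is defined over \(\mathbb{Q}\) by induction over \emph{maximal} supports, and then, for each \(Z\), it restricts to a generic open \(U\subseteq Z\) and observes that \(K_{Z,\mathbb{Q}}|_U[-\dim Z]\) is a local \emph{subsystem} of the ordinary constructible cohomology sheaf \(\sh H^{-\dim Z}K_{\mathbb{Q}}|_U\), which carries an integral structure; a rational subrepresentation of an integral monodromy representation admits an integral structure by intersecting with the lattice, and the per-summand equivalence (which the paper cites from Schnell rather than reproving, as you do) then transfers this to \(M_Z\). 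In particular, the ``obstacle'' you flag --- that nested supports \(Z\subsetneq Z'\) prevent generic restriction from isolating \(M_Z\) --- is not actually an obstruction and does not force the support-filtration machinery: the strict support conditions on intermediate extensions kill \(\sh H^{-\dim Z}\) of \(IC_{Z'}(L')\) on a generic open of \(Z\), and in any case one only needs a sub-local-system of something integral, not an exact isolation. Your route buys a cleaner categorical statement (an integral structure on each summand produced functorially, and on the complement as a cokernel) at the cost of leaning harder on the six-functor formalism over \(\mathbb{Z}\); the paper's route is more elementary, using only constructible cohomology sheaves on generic opens and elementary lattice considerations.
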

\begin{proof}
The statements about Hodge modules are due to \autocite{Sabbah2018}, as stated above. The statement comparing integral structures on \(M_Z\) and \(M_Z|_U\) is \autocite[Lemma 2.4]{Schnell2015a}.

If each \(M_Z\) admits an integral structure, then obviously so does \(M\). For the other direction, suppose \(M\) admits an integral structure, and fix one. We will first show that the decomposition \(M=\bigoplus_ZM_Z\) is defined over \(\mathbb{Q}\). Arguing inductively over the supports of \(M\) ordered by inclusion, it suffices to show that the decomposition \(M=M_Z\oplus N\) is defined over \(\mathbb{Q}\) in the case where \(Z\) is a maximal support, i.e. not strictly contained in any other support. Let \(K,K_Z\) be the complex perverse sheaves associated to \(M\) and \(M_Z\). There is a dense open subset \(U\subseteq Z\) such that \(\sh H^{-\dim Z}K|_U\) (the constructible cohomology sheaf in degree \(-\dim Z\)) and \(K_Z|_U[-\dim Z]\) are both local systems, and by maximality of \(Z\) among the supports, they are equal. It follows that \(K_Z|_U\) and its intermediate extension \(K_Z\) naturally have a rational structure induced from that of \(K\). Note that the splitting maps \(M_Z\to M\to M_Z\) are defined over \(\mathbb{Q}\), since they are entirely determined by their restrictions to \(U\), so \(N\) also has an induced rational structure.

To compare integral structures on \(M\) and the summands \(M_Z\), first observe that if \(\rho\colon G\to \operatorname{GL}(A)\) is a representation of a group \(G\) on a finitely generated abelian group \(A\), and if \(\rho_{\mathbb{Q}}\colon G\to \operatorname{GL}(A_{\mathbb{Q}})\) preserves a rational subspace \(V\subset A_{\mathbb{Q}}\), then \(\rho\) preserves the subgroup \(A\cap V\) of \(A\) (interpreted as the preimage of \(V\) under \(A\to A_{\mathbb{Q}}\)). That is, any rational subrepresentation of \(\rho_{\mathbb{Q}}\) admits an integral structure.

Now let \(K_{\mathbb{Q}}\) be the rational structure on \(M\). Given \(Z\) among the supports of \(M\), let \(K_{Z,\mathbb{Q}}\) be the rational structure on \(M_Z\), and take a non-empty Zariski open subset \(U\subset Z\) such that \(K_{Z,\mathbb{Q}}|_U[-\dim Z]\)  and \(\sh H^{-\dim Z}K_{\mathbb{Q}}|_U\) are local systems, where \(\sh H^{-\dim Z}\) is taken using the standard t-structure on constructible sheaves. Then \(K_{Z,\mathbb{Q}}|_U[-\dim Z]\) is a local subsystem of \(\sh H^{-\dim Z}K_{\mathbb{Q}}|_U\), and the latter admits an integral structure by assumption. Thus \(K_{Z,\mathbb{Q}}\) admits an integral structure as desired.
\end{proof}
\subsection{A compactification theorem by Sommese}
\label{28cefe6b-e51a-4080-8d90-7bd1c41cee45}
\begin{definition}
A proper modification of a complex analytic space \(X\) is a proper morphism \(f\colon X'\to X\) such that there exists a dense, Zariski open subset \(U\subset X\) such that \(f^{-1}(U)\) is dense and \(f|_{f^{-1}(U)}\colon f^{-1}(U)\to U\) is an isomorphism.
\end{definition}
Note that a proper modification is also a bimeromorphism, that is, the inverse \((f|_{f^{-1}(U)})^{-1}\colon U\to f^{-1}(U)\) is also meromorphic \autocite[Remark 1.8]{Peternell1994}.

\begin{definition}
A compact complex space \(X\) is in Fujiki's class \(\mathscr C\) if there exists a surjective holomorphic map \(f\colon Y\to X\) with \(Y\) a compact Kähler manifold.
\end{definition}
Fujiki introduced and studied this class of spaces in the papers \autocite{Fujiki1978,Fujiki1978/79,Fujiki1984}. We recall from Fujiki's results that the class \(\mathscr C\) is closed under taking subspaces and meromorphic images, and that any irreducible component of the Douady space of compact subvarieties of a space in class \(\mathscr C\) is again a space in class \(\mathscr C\). By
\autocite[Théorème 3]{Varouchas1986} and resolution of singularities, if \(X\) is in class \(\mathscr C\) then there exists a proper modification \(f\colon X'\to X\) where \(X'\) is a compact Kähler manifold. In fact by \autocite[Theorem 5]{Varouchas1989} we can even take \(f\) to be projective.

\begin{definition}
Let \(X\) be a complex analytic space. By a compactification of \(X\), we mean an open embedding \(X\subset \widebar{X}\) of \(X\) in a compact complex analytic space\(\widebar{X}\) such that \(X\) contains a non-empty Zariski open subset of \(\widebar{X}\).
\end{definition}

The following theorem goes back to Sommese \autocite[Proposition III, Remark III-C]{Sommese1978} and is also stated in a similar form in \autocite[Theorem 7.7]{Bakker2023}. However, parts of Sommese's argument seem problematic, or at least incomplete, even in the projective case. We give a slightly modified proof here, relying on Hironaka's flattening theorem to sidestep the problematic parts of Sommese's argument.

\begin{theorem}
\label{thm:sommese-compactification}
Let \(X\) be a Zariski open subset of an irreducible compact complex analytic space \(\widebar{X}\) in Fujiki's class \(\mathscr C\), \(Y\) a reduced complex analytic space, and \(\pi\colon X\to Y\) a surjective proper holomorphic map with connected fibres. Then there exists a commutative diagram
\begin{equation*}
\begin{tikzcd}
X\ar[d,"\pi"]&\ar[l]X'\ar[d]\ar[r,hook]&\widebar{X}'\ar[d,"\pi'"]\\
Y&\ar[l]Y'\ar[r,hook]&\widebar{Y}'
\end{tikzcd}
\end{equation*}
where the vertical maps are surjective proper maps with connected fibres, the left side horizontal maps are proper modifications, the right side horizontal maps are compactifications where \(\widebar{X}'\) and \(\widebar{Y}'\) are compact Kähler manifolds, and \(\widebar{X}'\) and \(\widebar{X}\) are bimeromorphic.
\end{theorem}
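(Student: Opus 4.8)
The plan is to first compactify $Y$ and the map $\pi$ in a crude way, then resolve and flatten to obtain the desired smooth Kähler models. The starting point is that $\widebar X$ lies in Fujiki's class $\mathscr C$, so by the remarks preceding the theorem there is a projective proper modification from a compact Kähler manifold; replacing $\widebar X$ by such a modification (and $X$ by the preimage of the open locus, which remains Zariski open and dense), I may as well assume at the outset that $\widebar X$ is itself a compact Kähler manifold. This already supplies the right-hand compactification $\widebar X' = \widebar X$ on the source side, and guarantees the final bimeromorphy condition automatically.

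Next I would construct a compactification of the base together with an extension of $\pi$. Since $\pi\colon X\to Y$ is proper with connected fibres and $X$ is Zariski open in the compact space $\widebar X$, I would take the closure of the graph of $\pi$ inside $\widebar X\times \widebar Y_0$ for some initial compactification $\widebar Y_0$ of $Y$ in class $\mathscr C$ (existence of such a $\widebar Y_0$ follows because $Y$ is a meromorphic image of the class-$\mathscr C$ space $\widebar X$, and class $\mathscr C$ is closed under meromorphic images and under passing to the image). Projecting the graph closure to the two factors gives a meromorphic extension $\widebar\pi\colon \widebar X\dashrightarrow \widebar Y_0$ of $\pi$, with $\widebar Y_0$ in class $\mathscr C$. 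Blowing up the indeterminacy locus and resolving singularities (both projective, hence preserving the Kähler condition on the source), I obtain a genuine holomorphic map $\widebar\pi'\colon \widebar X'\to \widebar Y_0$ from a compact Kähler manifold, still agreeing with $\pi$ over the dense open locus where no modification was needed.

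The main obstacle — and the reason Hironaka's flattening is invoked — is that $\widebar\pi'$ need not have connected fibres or a well-behaved (flat) structure over all of $\widebar Y_0$, so I cannot directly read off the base-side data $Y'\to \widebar Y'$. Here I would apply Hironaka's flattening theorem to $\widebar\pi'$: there is a projective modification $\widebar Y'\to \widebar Y_0$, which I may take to be a compact Kähler manifold after a further resolution, such that the strict transform (flattening) of $\widebar\pi'$ becomes flat. After base change and another resolution of the source, I obtain a flat proper morphism $\widebar\pi'\colon \widebar X'\to \widebar Y'$ of compact Kähler manifolds. Performing a Stein factorization and then resolving, I can arrange connected fibres; since over the original open locus the fibres of $\pi$ were already connected, this Stein factorization is an isomorphism there, so the modifications introduced are supported over the boundary and the left-hand horizontal maps are genuine proper modifications. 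Setting $X'$ and $Y'$ to be the preimages of the appropriate dense Zariski open subsets then yields the full diagram, with the vertical maps proper surjective with connected fibres, the horizontal maps of the required types, and $\widebar X'$ bimeromorphic to $\widebar X$ by construction. The delicate point throughout is bookkeeping which modifications are isomorphisms over the locus where $\pi$ was already defined and submersive, so that the claimed properties (proper modification on the left, compactification on the right) hold rather than merely some weaker birational relationship.
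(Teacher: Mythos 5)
There is a genuine gap at the very first substantive step: you assert the existence of ``some initial compactification $\widebar{Y}_0$ of $Y$ in class $\mathscr C$'' on the grounds that $Y$ is a meromorphic image of $\widebar{X}$. But $Y$ is only the image of the \emph{open} subset $X$ under $\pi$; there is no a priori meromorphic map from the compact space $\widebar{X}$ to anything containing $Y$, so the closure properties of class $\mathscr C$ under meromorphic images do not apply. Producing a compactification of $Y$ together with a meromorphic extension of $\pi$ is precisely the content of the theorem, so this step is circular. The same problem recurs in your graph construction: the closure of the graph of $\pi$ inside $\widebar{X}\times\widebar{Y}_0$ is a closed subset, but it is not automatically an \emph{analytic} subset (closures of analytic sets defined on open subsets can fail to be analytic, e.g.\ by accumulating with unbounded volume near the boundary), so one cannot conclude that $\pi$ extends meromorphically without further argument. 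In short, the proposal assumes the two hard points --- that $Y$ compactifies within class $\mathscr C$ and that $\pi$ extends meromorphically --- and only supplies the routine flattening/resolution/Stein-factorization bookkeeping around them.

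The paper's proof supplies exactly the missing idea via the Douady space. After first flattening $\pi$ by Hironaka's theorem, the flat proper family $\pi\colon X\to Y$ induces a classifying map $j\colon Y\to C$ into an irreducible component $C$ of the Douady space of $\widebar{X}$; by Fujiki's results $C$ is compact and again in class $\mathscr C$, which is what produces the compactification of $Y$ out of thin air. The universal family $d\colon Z\to C$ with its evaluation map $\phi\colon Z\to\widebar{X}$ then simultaneously provides the meromorphic extension of $\pi$: using \autoref{thm:rigidity-irreducible} (irreducibility of general fibres) and Remmert's proper mapping theorem one checks that $j$ is an open embedding over a Zariski open subset and that $\phi$ is a proper modification. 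Only after that does one resolve to Kähler manifolds as in your final paragraph. If you want to repair your argument, you would need to replace the graph-closure step with this cycle-space (or Douady-space) construction, or otherwise independently prove that $\pi$ extends to a meromorphic map on $\widebar{X}$.
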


\begin{proof}
By replacing \(Y\) and \(\widebar{X}\) with proper modifications, we can arrange that \(\pi\) is flat by Hironaka's flattening theorem \autocite[Corollary 1]{Hironaka1975}, while still having connected fibres. Note that Fujiki's class \(\mathscr C\) is closed under proper modifications. Abusing notation, we will thus assume from the start that \(\pi\) is flat.

Now consider the Douady space of \(\widebar{X}\). By functoriality and flatness of \(\pi\), \(Y\) maps to the Douady space; let \(C\) be an irreducible component containing the image of \(Y\). By \autocite{Fujiki1984}, \(C\) is compact and in class \(\mathscr{C}\). Since the fibres of \(\pi\) are connected, our assumptions on \(X\) and \(Y\) imply that the general fibre is irreducible, so by \autoref{thm:rigidity-irreducible} below applied to \(\pi\), the functorial map \(j\colon Y\to C\) is an open embedding over a non-empty Zariski open subset of \(Y\).

Let \(d\colon Z\to C\) be the family of subvarieties of \(\widebar{X}\) parametrized by \(C\), and \(\phi\colon Z\to \widebar{X}\) the projection to \(\widebar{X}\). Let \(U\subset Y\) be the Zariski open subset over which the fibres of \(\pi\) are irreducible, and let \(Z_U=\phi^{-1}(\pi^{-1}(U))\).

If for \(c\in C\) the fibre \(\delta^{-1}(c)\) intersects \(Z_U\), then \(\phi(\delta^{-1}(c))\) intersects an irreducible fibre of \(\pi\) in a Zariski open subset, hence contains that entire fibre. Since the fibre and \(\phi(\delta^{-1}(c))\) are flat deformations of each other, they must coincide, so \(Z_U=d^{-1}(j(U))\). In particular, applying Remmert's proper mapping theorem to the complement of \(Z_U\) in \(Z\), \(j(U)\) is Zariski open in \(C\). This implies that \(\phi\colon Z\to \widebar{X}\) is an isomorphism when restricted to \(Z_U\), hence is a proper modification. This also shows that \(C\) is a compactification of \(Y\).

Finally, \(Z\) and \(C\) are compact in Fujiki class \(\mathscr C\). Hence there is a proper modification \(\pi'\colon \widebar{X}'\to \widebar{Y}'\) of \(d\colon Z\to C\) such that \(\widebar{X}'\) and \(\widebar{Y}'\) are compact Kähler manifolds as desired. In particular, we have also constructed bimeromorphic morphisms \(\widebar{X}'\to Z\to \widebar{X}\).
\end{proof}

The following rigidity theorem is given, with ``connected'' in place of ``irreducible'', as \autocite[Sub-Lemma A]{Sommese1978} as part of the proof of \autocite[Proposition III]{Sommese1978}. However, the statement given by Sommese is false as written; if \(F\) is a reducible but connected pure-dimensional fibre, take \(F'\) to be a component of \(F\) to get a contradiction. This difference between ``connected'' and ``irreducible'' does not matter for \autoref{thm:sommese-compactification} or for \autocite[Proposition III]{Sommese1978}, however.
\begin{theorem}
\label{thm:rigidity-irreducible}
Let \(p\colon X\to Y\) be a proper surjective map between complex analytic spaces. Assume \(p\) has irreducible fibres. Let \(F\) be a fibre. Then there exists a neighbourhood \(U\) of \(F\) such that any compact irreducible analytic space \(F'\subset U\) with \(\dim F'=\dim F\) is a fibre of \(p\).
\end{theorem}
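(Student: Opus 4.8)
The plan is to reduce the statement to two standard facts about proper holomorphic maps, namely Remmert's proper mapping theorem and the upper semicontinuity of fibre dimension, with the irreducibility hypothesis entering only at the very last step. The one point that requires care — and the source of the rigidity — is to choose the neighbourhood \(U\) to be of the form \(p^{-1}(V)\) where \(V\) is a \emph{Stein} neighbourhood of the point \(y_0\) with \(F=p^{-1}(y_0)\); this is what forces the image of any admissible \(F'\) to collapse to a point and thereby rules out the positive-dimensional images that would make a naive argument fail.

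Concretely, write \(d=\dim F\). First I would invoke upper semicontinuity of fibre dimension: the locus \(S=\{y\in Y:\dim p^{-1}(y)\ge d+1\}\) is closed and analytic, since it equals \(p(\Sigma)\) where \(\Sigma=\{x\in X:\dim_x p^{-1}(p(x))\ge d+1\}\) is the closed analytic locus in \(X\) where the fibre dimension jumps, and \(p(\Sigma)\) is closed analytic because \(p\) is proper (Remmert's theorem again). Moreover \(y_0\notin S\), as \(\dim p^{-1}(y_0)=d\). Hence \(Y\setminus S\) is open and contains \(y_0\). Since the Stein open sets form a neighbourhood basis at every point of a complex analytic space, I can choose a Stein open \(V\) with \(y_0\in V\subseteq Y\setminus S\), and set \(U=p^{-1}(V)\), which is an open neighbourhood of \(F\).

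Now let \(F'\subseteq U\) be compact, irreducible, with \(\dim F'=d\). Since \(F'\) is compact, \(p|_{F'}\colon F'\to V\) is proper, so its image \(p(F')\) is a compact analytic subset of \(V\) by Remmert's proper mapping theorem, and it is irreducible, hence connected, being the continuous image of an irreducible space. The crucial step is that a connected compact analytic subset of a Stein space is a single point: holomorphic functions separate points on a Stein space, but they restrict to constants on a compact connected analytic subset by the maximum principle. Thus \(p(F')=\{w\}\) for some \(w\in V\), so \(F'\subseteq p^{-1}(w)\). Finally, because \(w\in V\subseteq Y\setminus S\) we have \(\dim p^{-1}(w)\le d=\dim F'\), forcing \(\dim p^{-1}(w)=d\); as \(p^{-1}(w)\) is irreducible of dimension \(d\) and contains the irreducible \(d\)-dimensional set \(F'\), I conclude \(F'=p^{-1}(w)\), a fibre of \(p\).

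I expect the only genuine subtlety to be the interplay of the Stein choice with the semicontinuity bound: the Stein property collapses the image to a point, and the inclusion \(V\subseteq Y\setminus S\) then keeps the fibre over that point from being too large. The irreducibility of the fibres is exactly what is needed in the final dimension comparison to upgrade the inclusion \(F'\subseteq p^{-1}(w)\) to an equality — and it is precisely the hypothesis whose weakening to ``connected'' makes the statement false, as noted before the statement. Everything else is a routine application of Remmert's theorem and of semicontinuity of fibre dimension.
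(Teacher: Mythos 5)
Your proof is correct and follows essentially the same route as the paper's: Remmert's proper mapping theorem plus a Stein neighbourhood of \(p(F)\) to collapse \(p(F')\) to a point, upper semicontinuity of fibre dimension to bound \(\dim p^{-1}(w)\) by \(d\), and the irreducibility hypothesis to upgrade the inclusion \(F'\subseteq p^{-1}(w)\) to an equality. The only difference is that you make the semicontinuity step explicit via the analytic locus \(S\), which the paper leaves implicit.
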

\begin{proof}
For \(F'\) as in the statement, \(p(F')\) is a subvariety of \(Y\) by Remmert's proper mapping theorem \autocite[§10.6]{Grauert1984}, but if \(U\) is sufficiently small, \(p(F')\) belongs to a Stein open neighbourhood of \(p(F)\) and hence must be a point. By upper semi-continuity of dimension, the fibres of \(p\) near \(F\) have dimension at most \(\dim F\), so by the irreducibility assumptions, \(F'\) is an entire fibre.
\end{proof}

Let us briefly recall the following construction in the complex analytic setting. We will use this often to reduce from working with an arbitrary generically defined VHS, to working with a VHS which is defined on the complement of a simple normal crossings divisor and has unipotent local monodromy.
\begin{lemma}
\label{lemma:construct-analytic-covers}
Let \(Z\) be a compact, closed, irreducible analytic subset of a Kähler manifold \(X\), and \(U\subseteq Z\) a smooth Zariski-open subset.

For any étale cover \(U'\to U\), there exists a compactification \(Z'\supseteq U'\) by a compact Kähler manifold and an extension \(\colon Z'\to Z\) of the covering map, such that \(Z'-U'\) is a simple normal crossings divisor.
\end{lemma}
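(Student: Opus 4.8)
The plan is to reduce, by an embedded resolution of \(Z\) inside \(X\), to the case where \(U\) is the complement of a simple normal crossings divisor in a compact Kähler \emph{manifold}, then to extend the given cover by a normalization and resolve; the only genuinely delicate point will be that the resulting compactification of \(U'\) is again Kähler. First I would resolve \(Z\): blowing up \(X\) along a sequence of smooth centres contained in the successive strict transforms of \(Z_{\mathrm{sing}}\) produces a proper modification \(\tilde X\to X\) under which the strict transform \(\hat Z\) of \(Z\) becomes smooth and maps isomorphically to \(Z\) over \(Z_{\mathrm{sm}}\supseteq U\). Each such centre is compact, being contained in \(Z\), and blowing up a Kähler manifold along a smooth compact centre is again Kähler; hence \(\hat Z\), a compact submanifold of the Kähler manifold \(\tilde X\), is a compact Kähler manifold. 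After finitely many further blow-ups I may assume \(D:=\hat Z\setminus U\) is a simple normal crossings divisor, so that \(U=\hat Z\setminus D\) and \(\hat Z\to Z\) extends the inclusion \(U\hookrightarrow Z\).

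Next, given the finite étale cover \(U'\to U\), I would extend it across \(D\) by taking the normalization \(\nu\colon\bar Z'\to\hat Z\) of \(\hat Z\) in \(U'\) \autocite{Grauert1984}. This is a finite morphism, étale over \(U\), with \(\nu^{-1}(U)=U'\) and \(\bar Z'\) a normal compact complex space.

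The heart of the matter is to show that \(\bar Z'\) lies in Fujiki's class \(\mathscr C\). Since \(\hat Z\) is already a compact Kähler manifold, this amounts to the assertion that a finite, possibly ramified, cover of a compact Kähler manifold lies in class \(\mathscr C\), which I would obtain from the stability of \(\mathscr C\) under finite covers \autocite{Fujiki1984}. (For instance, a cyclic cover embeds into the total space of a line bundle over \(\hat Z\), hence into a \(\mathbb P^1\)-bundle over \(\hat Z\), which is Kähler, exhibiting it as a closed subvariety of a compact Kähler manifold and thus a member of \(\mathscr C\) by closure under subvarieties.) Granting \(\bar Z'\in\mathscr C\), the combination of \autocite{Varouchas1986} with resolution of singularities yields a resolution \(\rho\colon Z'\to\bar Z'\) with \(Z'\) a compact Kähler manifold. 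As \(U'\) is smooth in \(\bar Z'\), the map \(\rho\) is an isomorphism over \(U'\), and after further blow-ups supported on \(\rho^{-1}(\nu^{-1}(D))\) I may assume \(Z'\setminus U'\) is simple normal crossings. The composite \(Z'\xrightarrow{\rho}\bar Z'\xrightarrow{\nu}\hat Z\to Z\) then extends \(U'\to U\), and \(Z'\) is the desired compactification.

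I expect this last step to be the real obstacle: a naive pullback of the Kähler form on \(\hat Z\) degenerates along the ramification divisor, so Kählerness of the cover cannot be read off directly, and the argument must pass through class \(\mathscr C\) and Varouchas' theorem rather than through an explicit metric. The remaining ingredients—existence of the normalization and the arrangement of the boundary into simple normal crossings—are routine applications of Grauert–Remmert theory and resolution of singularities, exactly as in the proof of \autoref{thm:sommese-compactification}.
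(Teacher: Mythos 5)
Your overall architecture --- extend the \'etale cover to a finite branched cover via Grauert--Remmert, then resolve and arrange the boundary into simple normal crossings --- is the same as the paper's, and those reductions are fine. But the step you correctly single out as the heart of the matter, Kählerness of the final compactification, has a gap. Varouchas' theorem (\autocite[Théorème 3]{Varouchas1986}, or the projective refinement \autocite[Theorem 5]{Varouchas1989}) produces \emph{some} proper modification \(\rho\colon Z'\to \bar Z'\) with \(Z'\) a compact Kähler manifold, i.e.\ an isomorphism over \emph{some} dense Zariski-open subset; it does not produce one that is an isomorphism over all of \(U'\), and your justification ``as \(U'\) is smooth in \(\bar Z'\), the map \(\rho\) is an isomorphism over \(U'\)'' is not valid --- a proper modification by a manifold may perfectly well modify smooth points (blow up a point of a smooth surface). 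Without control over the locus where \(\rho\) fails to be an isomorphism, your \(Z'\) compactifies a modification of \(U'\) rather than \(U'\) itself, so the conclusion of the lemma is not reached. Nor can you repair this by composing with a second resolution that \emph{is} an isomorphism over \(U'\): such a resolution of the class-\(\mathscr C\) space \(\bar Z'\) is again in class \(\mathscr C\), but class-\(\mathscr C\) manifolds need not be Kähler (non-Kähler Moishezon manifolds are the standard example), so the detour through class \(\mathscr C\) loses exactly the positivity you need.

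The paper avoids this by never leaving the world of relatively ample modifications. With \(\widehat Z\to Z\) the finite cover produced by Grauert--Remmert \autocite[Théorème 5.4]{SGA1} and \(Z'\to\widehat Z\) a log resolution obtained by blowups in smooth centers lying over \(\widehat Z\setminus U'\) (hence an isomorphism over \(U'\) by construction), the sum \(E\) of the exceptional divisors is anti-ample relative to \(Z'\to\widehat Z\), hence --- because \(\widehat Z\to Z\) is finite --- also relative to the composite \(\pi\colon Z'\to Z\). Since \(Z\) carries a Kähler class \(\omega\) restricted from \(X\), the class \(\pi^{\ast}\omega+\varepsilon c_1(\mathcal O_{Z'}(-E))\) is Kähler for small \(\varepsilon>0\). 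If you prefer your order of operations (first make \(\hat Z\) a smooth compact Kähler manifold, then take the normalization \(\nu\colon\bar Z'\to\hat Z\) in \(U'\)), the same mechanism applies: a resolution of \(\bar Z'\) by blowups in smooth centers contained in its singular locus is projective over \(\bar Z'\), hence over the Kähler manifold \(\hat Z\) via the finite map \(\nu\), and is an isomorphism over \(U'\) by construction. Either way, the correct fix replaces the appeal to Varouchas by a relative-ampleness argument over a Kähler base.
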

\begin{proof}
We can construct \(Z'\) by first applying a theorem of Grauert and Remmert \autocite[Théorème 5.4]{SGA1} to compactify \(U\), and then taking log resolutions. Write the resulting map \(\pi\colon Z'\to Z\) as a composition \(Z'\to \widehat{Z}\to Z\). Again, the sum of exceptional divisors of the blowups in the construction of \(Z'\to \widehat{Z}\) is \(\pi\)-anti-ample, so \(Z'\) is also Kähler.
\end{proof}

\begin{corollary}
\label{cor:VHS-implies-algebraicity}
Let \(X\) be a compact Kähler manifold, and suppose \(U\subset X\) is a Zariski-open subset with a polarizable \(\mathbb{C}\)-VHS \(V\) which admits an integral structure, and whose period map has rank \(k\) at some point in \(U\). Then the transcendence degree of the field of meromorphic functions on \(X\) is at least \(k\).
\end{corollary}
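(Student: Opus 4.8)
The plan is to produce, from the period map, a surjective map onto a $k$-dimensional Moishezon manifold after passing to a finite cover and a bimeromorphic modification of $X$, and then pull back its meromorphic functions. Here the algebraic dimension $a(X)$ is the transcendence degree of $\mathcal M(X)$, and the first observation is that the hypothesis controls it correctly: the rank of the differential of a holomorphic map is lower semicontinuous, so a point of rank $k$ forces the generic rank of the period map to be $\geq k$, whence its image has dimension $\geq k$. It therefore suffices to bound $a(X)$ below by the generic rank.

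First I would reduce to a convenient normal form. The integral structure forces the monodromy group $\Gamma$ to be a discrete subgroup of some $\mathrm{GL}(\mathbb Z)$, and after a finite étale cover of $U$ (using quasi-unipotence of the local monodromy together with a torsion-free refinement à la Selberg) we may assume $\Gamma$ is torsion-free with unipotent local monodromy, so that $\Gamma\backslash\mathcal D$ is a complex manifold and the period map $\Phi\colon U\to\Gamma\backslash\mathcal D$ is an honest holomorphic map. To realize this at the level of spaces I would apply \autoref{lemma:construct-analytic-covers} with $Z=X$, obtaining a compact Kähler manifold $X'\supseteq U'$ with $X'\setminus U'$ a simple normal crossings divisor and a generically finite dominant map $X'\to X$. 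Since such a map induces a finite extension $\mathcal M(X')/\mathcal M(X)$ we have $a(X')=a(X)$, and the étale cover preserves the rank of the period map at the corresponding point; so it is harmless to assume from the outset that $\Gamma$ is torsion-free, the monodromy is unipotent, and $X\setminus U$ is simple normal crossings.

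Next I would pass to the reduction of the period map: contracting the connected components of the fibres of $\Phi$ should yield a proper surjective holomorphic fibration $\pi\colon U\to Y$ with connected fibres onto a reduced space $Y$ of dimension $\geq k$, through which $\Phi$ factors via a generically finite map $Y\to\Gamma\backslash\mathcal D$. Feeding this $\pi$ into \autoref{thm:sommese-compactification} — with $\overline{X}=X$, which lies in Fujiki's class $\mathscr C$ because it is compact Kähler — then produces a commutative diagram in which $\overline{X}'$ and $\overline{Y}'$ are compact Kähler manifolds, $\pi'\colon\overline{X}'\to\overline{Y}'$ is surjective proper with connected fibres, $\overline{X}'$ is bimeromorphic to $X$, and $\overline{Y}'$ compactifies a modification of $Y$, so that $\dim\overline{Y}'=\dim Y\geq k$ and the period map still descends to a generically finite map on a dense open subset of $\overline{Y}'$.

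Finally I would invoke the algebraicity input. On $\overline{Y}'$ the period map is generically finite, so the variation is maximal, and by the theorem of Brunebarbe–Cadorel \autocite{Brunebarbe2020} the suitably extended Griffiths–Hodge line bundle is big; a compact complex manifold carrying a big line bundle is Moishezon, hence $a(\overline{Y}')=\dim\overline{Y}'\geq k$. Pulling meromorphic functions back along the surjection $\pi'$ gives $a(\overline{X}')\geq a(\overline{Y}')\geq k$, and since $\overline{X}'$ is bimeromorphic to $X'$ (through the finite cover, which leaves the transcendence degree unchanged) we conclude $a(X)\geq k$. The \emph{main obstacle} is the third step: constructing the proper reduction fibration $\pi$ with connected fibres of the correct dimension and controlling its behaviour over the boundary $X\setminus U$, since the period map is not proper onto its image and its fibres need not be compact inside $U$. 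This is precisely the difficulty that the Sommese-type \autoref{thm:sommese-compactification} is designed to absorb, and reconciling the reduction on $U$ with the compactification it provides is where the real care will be needed.
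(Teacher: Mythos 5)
Your route is the paper's route almost step for step (reduce via \autoref{lemma:construct-analytic-covers} to a simple normal crossings boundary with unipotent monodromy, Stein-factor the period map, compactify the resulting fibration with \autoref{thm:sommese-compactification}, and invoke \autocite{Brunebarbe2020} on the base), but the one step you explicitly leave open is a genuine gap, and you misattribute where its resolution comes from. You write that ``the period map is not proper onto its image and its fibres need not be compact inside \(U\)'' and that \autoref{thm:sommese-compactification} is designed to absorb this. It is not: that theorem takes as \emph{input} a proper surjective map \(\pi\colon X\to Y\) with connected fibres and only solves the separate problem of extending such a fibration to compactifications; it gives you nothing if \(\pi\) is not already proper. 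The actual resolution, which the paper uses, is Griffiths' theorem \autocite[Section 9]{Griffiths1970}: once \(U\) is the complement of a simple normal crossings divisor, the monodromy is unipotent, and the VHS is \emph{extended maximally} (i.e.\ over all boundary components where the local monodromy is trivial, which may enlarge \(U\)), the period map \(\Phi\colon U\to\Gamma\backslash D\) \emph{is} proper onto its image. With properness in hand, the Stein factorization \(U\to Y\to\Phi(U)\) is exactly your reduction fibration \(\pi\), with connected fibres and \(\dim Y\ge k\), and it is this \(\pi\) that gets fed into \autoref{thm:sommese-compactification}. So the missing ingredient is a citation, not a new construction --- but without it your third step does not get off the ground.

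Two smaller remarks. First, the maximal extension of the VHS is not optional decoration: properness of \(\Phi\) can genuinely fail without it, so it should be built into your normalization step alongside the normal crossings and unipotence reductions. Second, your use of \autocite{Brunebarbe2020} differs cosmetically from the paper's: the paper notes that \(Y\) is finite over the period image, hence carries a VHS with immersive period map, and concludes from \autocite[Corollary 1.3]{Brunebarbe2020} that \(\widebar{Y}'\) is projective; you settle for bigness of the Griffiths line bundle and hence Moishezon-ness. Either suffices for the transcendence degree bound, so that part of your argument is fine.
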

\begin{proof}
The assumption and conclusion are both invariant under bimeromorphic modifications and finite covers, so by \autoref{lemma:construct-analytic-covers}, we can assume that \(U\) is the complement of a simple normal crossings divisor and that \(V\) has unipotent local monodromy, and is extended maximally.

Then its period map \(\Phi\colon U\to \Gamma\backslash D\) is proper \autocite[Section 9]{Griffiths1970}, so we can take its Stein factorization \(U\to Y\to \Phi(U)\) where the first map has connected fibres and the second is finite. Now apply \autoref{thm:sommese-compactification} to \(U\to Y\) to extend to \(\widebar{\Phi}\colon X\to \widebar{Y}\), after possibly modifying \(X\) further. Then \(Y\) is finite over the period image \(\Phi(U)\), hence it admits a VHS whose period map immersive, so by \autocite[Corollary 1.3]{Brunebarbe2020}, \(\widebar{Y}\) is projective. Since \(\widebar{\Phi}\colon X\to \widebar{Y}\) is surjective and \(\widebar{Y}\) has dimension at least \(k\), we are done.
\end{proof}

Note that without the use of \autoref{thm:sommese-compactification}, we could not guarantee that the map \(U\to \widebar{Y}\) gives a meromorphic map on \(X\), since the map on \(U\) might have essential singularities on the boundary.

\begin{remark}
Instead of the analytic result of \autocite{Brunebarbe2020}, one could also use the resolution of the Griffith's conjecture on the quasi-projectivity of period images by Bakker-Brunebarbe-Tsimerman, specifically the factorization result \autocite[Theorem 7.6]{Bakker2023}. The method of Brunebarbe-Cadorel is simpler, though, and suffices for our purpose.
\end{remark}
\section{The decomposition theorem}
\label{aa176a37-ee8b-405b-beda-6b0568decba7}
In this section, we prove \autoref{thm:decomp}. Most of Saito's argument in \autocite{Saito1990a} is independent of \autocite{Kashiwara1986}, and indeed is essentially the same as the original argument in the projective case in \autocite{Saito1988}, and for complex Hodge modules in \autocite{Sabbah2018}. We will follow the same overall strategy as in those papers, arguing by induction on the dimension of the support \(Z\) of \(M\). By \autocite[Section 14.4]{Sabbah2018} (or \autocite[Proposition 5.3.4]{Saito1988} in the original setting), reducing \(\dim f(Z)\) inductively via repeated use of nearby and vanishing cycles, it suffices to prove the decomposition theorem for Hodge modules supported on fibres of \(Z\) over \(Y\), so we will from the start assume that \(\dim f(Z)=0\) (note that in \autocite{Saito1988,Saito1990a}, the projectivity assumption, resp. the results of \autocite{Kashiwara1986}, are only used after this reduction).

Furthermore, by the induction, we can assume the results of the decomposition theorem for any Hodge module supported on a subvariety \(W\subset X'\) with respect to any proper, relatively Kähler morphism \(g\colon X'\to Y'\) such that the fibres of \(W\) over \(Y'\) have dimension strictly less than \(\dim Z\). Likewise, we may use the decomposition theorem for projective morphisms.

By \autoref{thm:hm-structure}, it suffices to consider the case that the Hodge module \(M\) has strict support on \(Z\), which we will assume for the rest of the proof. Let \(V=M|_U\) denote the corresponding VHS on a smooth Zariski open subset \(U\subset Z\), and let \(D=Z-U\).
\subsection{Behaviour under alterations}
\label{sec:orgacd3db2}

\begin{lemma}
\label{lemma:alteration}
Suppose we are in the situation above with \(\dim f(Z)=0\), and that \(\pi\colon Z'\to Z\) is a composition \(Z'\to \widehat{Z}\to Z\), where \(\widehat{Z}\to Z\) is finite, \(Z'\) is a compact Kähler manfold  and \(Z'\to \widehat{Z}\) is a composition of blowups in smooth subvarieties of \(\widehat{Z}\).
Let \(M'\in \hmz {Z'} n\) be the generic pullback of \(M\), i.e. the Hodge module corresponding to the generic VHS \((\pi|_{\pi^{-1}(U)})^{\ast}V\) by \autoref{thm:hm-structure}, and suppose that the conclusions of \autoref{thm:decomp} are satisfied for \(\Tp {a_{Z'}}M'\). Suppose finally that \(\Tp {a_{X}}M\) satisfies \autoref{thm:decomp}(3).

Then \(\Tp {a_{X}}M\) satisfies the conclusions of \autoref{thm:decomp}.
\end{lemma}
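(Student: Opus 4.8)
The plan is to exhibit $\Tp{a_X} M$ as a direct summand of $\Tp{a_{Z'}} {M'}$ and to transfer the conclusions of \autoref{thm:decomp} across this summand, using the hypotheses on $M'$ for positivity and the given hard Lefschetz for $M$ to control the relevant weight filtration. Writing $\iota\colon Z\hookrightarrow X$ for the inclusion and $g=\iota\circ\pi\colon Z'\to X$, I have $a_{Z'}=a_X\circ g$, and $g$ is projective, being a composition of a finite map, blow-ups in smooth centres, and a closed immersion. Since $g$ is projective and generically finite of degree $d=\deg\pi$ onto $Z$, the decomposition theorem for projective morphisms applies, and the trace of $g$ (which divides by $d$) splits $M$ off as a direct summand of $\Tpc{g}{0} {M'}$; because the splitting is defined over $\mathbb{Q}$, the integral structure on $M$ is recovered up to finite index, as in the proof of \autoref{thm:hm-structure}. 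Combining the projective decomposition of $\Tp{g}{M'}$ with the composition compatibility \autoref{lemma:sign_compatibility_composition} then presents $\Tp{a_X} M$ as a direct summand of $\Tp{a_{Z'}} {M'}$, compatibly with the Lefschetz operators—cup product with $\pi^{*}l$ restricts to cup product with $l$ on the summand, by the projection formula—and, thanks to the $\epsilon(k)$-signs, with the pushforward polarizations.

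Conclusions (1) and (2) are then immediate: strictness and the property of being a polarizable Hodge module of weight $n+k$ pass to direct summands, and the integral structure of each $\Tpc{a_X}{k} M$ is inherited from that of $\Tpc{a_{Z'}}{k} {M'}$ up to finite index. Conclusion (3) is among our hypotheses.

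The real content is conclusion (4). If $\pi^{*}l$ were a Kähler class on $Z'$ I could simply restrict the polarization of $\Tp{a_{Z'}} {M'}$ to the summand; the difficulty is that $\pi^{*}l$ is merely nef, being the pullback of $l|_{Z}$ under the blow-up part of $\pi$, so the hypotheses furnish a polarization only for honest Kähler classes. To get around this I would fix an auxiliary Kähler class $\omega$ on $Z'$ and consider the two commuting Lefschetz operators $L_{\pi^{*}l}$ and $L_{\omega}$ on the total cohomology $H'=\bigoplus_k\Tpc{a_{Z'}}{k} {M'}$. Every positive combination $a\,\pi^{*}l+b\,\omega$ is Kähler (nef plus Kähler is Kähler), so applying the hypotheses for $M'$ to each such class makes $H'$ a mixed Hodge structure polarized by the cone $\mathrm{C}(L_{\pi^{*}l},L_{\omega})$, with weight filtration the cohomological grading. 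Applying \autoref{lemma:mhs-polarization-reduction-of-variables} with $\mathrm{N}_1=L_{\pi^{*}l}$ (the case $k=2$ anticipated in \autoref{rmk:lefschetz}) yields, for $h=-k$, a polarization $\widetilde{S}=S(L_{\pi^{*}l}^{\,k},\id)$ of the reduced structure $P_{L_{\pi^{*}l}}\operatorname{gr}_{-k}^{W(\pi^{*}l)}H'$ by the residual operator $\widetilde{L_{\omega}}$. Now I restrict to the summand $H=\bigoplus_k\Tpc{a_X}{k} M$: the given hard Lefschetz for $l$ forces $W(\pi^{*}l)|_{H}$ to equal the cohomological grading, so the reduced structure contains the primitive cohomology $PH_{-k}=\ker\bigl(l^{k+1}\colon\Tpc{a_X}{-k} M\to\Tpc{a_X}{k+2} M\bigr)$ as a direct summand, and $\widetilde{S}$ restricts on it to $\Tpc{a_X}{k,-k} {\mathcal{S}}(l^{k}\,\cdot,\cdot)$, giving the polarized $\mathfrak{sl}_2$-Hodge structure that is conclusion (4).

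The main obstacle is this handling of the boundary class $\pi^{*}l$, together with checking that the residual operator $\widetilde{L_{\omega}}$ does not interfere. The resolution is that $PH_{-k}$ is pure of weight $n-k$, which is exactly the central weight of the reduced structure $P_{L_{\pi^{*}l}}\operatorname{gr}_{-k}^{W(\pi^{*}l)}H'$; hence it sits in $\operatorname{gr}_0$ of the monodromy weight filtration of $\widetilde{L_{\omega}}$ and is automatically $\widetilde{L_{\omega}}$-primitive, so the polarization provided by \autoref{lemma:mhs-polarization-reduction-of-variables} restricts to a genuine polarization of the pure Hodge structure $PH_{-k}$. The remaining point demanding care is the sign bookkeeping: identifying the restricted form with $\Tpc{a_X}{k,-k}{\mathcal{S}}(l^{k}\,\cdot,\cdot)$, including the factor $\epsilon(k)$, relies on the compatibility \autoref{lemma:sign_compatibility_composition}, which is precisely why that statement was isolated.
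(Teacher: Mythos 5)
Your architecture is essentially the paper's: realize $\Tp {a_{X}}M$ as a direct summand of $\Tp {a_{Z'}}M'$ to get (1) and (2), take (3) as hypothesis, and prove (4) by pairing the nef class $\pi^{\ast}l$ with a second commuting Lefschetz operator, forming a mixed Hodge structure polarized by a two-generator cone, and reducing via \autoref{lemma:mhs-polarization-reduction-of-variables}. The gap is in your final positivity step. You claim that the image of $PH_{-k}$ in $P_{L_{\pi^{\ast}l}}\operatorname{gr}^{W(\pi^{\ast}l)}_{-k}H'$ is \emph{automatically} primitive for the residual operator $\widetilde{L_{\omega}}$ because it is pure of the central weight $n-k$. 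Purity of the central weight only places $PH_{-k}$ inside $\operatorname{gr}_0$ of $W(\widetilde{L_{\omega}})$; it does not place it in the kernel of $\widetilde{L_{\omega}}$. The weight-$0$ piece of a polarized $\mathfrak{sl}_2$-Hodge structure decomposes as $P\oplus \mathrm{X}P'\oplus\cdots$, and the form $\widetilde{S}(\cdot,\overline{C_D\,\cdot})$ is definite of \emph{alternating} sign on these pieces: already on a compact Kähler surface the class $[\omega]\in H^2$ is pure of the central weight but has positive self-intersection, opposite to that of primitive $(1,1)$-classes. Since your $\omega$ is an arbitrary auxiliary Kähler class on $Z'$, cup product with it neither annihilates nor even preserves the summand $H$, so the restriction of the polarization to $PH_{-k}$ need not be positive definite, and conclusion (4) does not follow.

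The paper closes exactly this gap by a specific choice of the second cone generator: it takes $\eta=c_1(\mathcal{O}_{Z'}(-E))$, where $E$ is the sum of the exceptional divisors, so that $\pi^{\ast}\omega+c\eta$ is Kähler for $0<c\le c_0$, and works with the cone $\mathrm{C}(\pi^{\ast}\omega,\,c_0^{-1}\pi^{\ast}\omega+\eta)$. The induced operator $\eta\colon\Tpc {\pi} jM'\to\Tpc {\pi} {j+2}M'(1)$ vanishes over $U$ (where $\pi$ is an isomorphism and $E$ is absent), and a morphism out of a module with strict support $Z$ is determined by its restriction to $U$; hence $\eta$ kills the summand $M$, so each $H^j$ lies in the kernel of the residual operator and is genuinely biprimitive. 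That support argument, not a weight count, is what makes the polarization formula restrict with the correct sign; your proof would be repaired by replacing the arbitrary $\omega$ on $Z'$ with this $\eta$ and supplying that argument (together with the observation that $\Tp {\pi}S'$ and $S$ agree on $U$, hence on $M$, to identify the restricted pairing with $\Tpc {a_{X}} {k,-k}\mathcal{S}(l^k\,\cdot,\cdot)$).
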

Note that if \(Z\) were smooth, we could forget about the ambient manifold \(X\) and just prove \autoref{thm:decomp} for \(\Tp {a_{Z}}M\). If \(Z\) is singular, however, it is convenient to have an ambient manifold to work with Kähler forms. Since we take \(Z'\) to be smooth in the lemma, we do not need a bigger ambient manifold there, even though one is often available. Thus this lemma, together with \autoref{lemma:construct-analytic-covers}, allows us to reduce to the case where \(V\) is a VHS on the complement of a normal crossings divisor. By allowing the finite map \(\widehat{Z}\to Z\), we can ensure that the local monodromy of \(V\) is unipotent.

\begin{proof}
First note that \(M\) is a direct summand of \(\Tp {\pi}M'\) under a decomposition of \(\Tp {\pi}M'\) on \(X\). Thus \(\Tpc {a_{X}} kM\) is a direct summand of \(\Tpc {a_{Z'}} kM'\) for every \(k\), so the former is strict, and the cohomology of \(M\) consists of polarizable Hodge structures of the expected weight.

Let \(E\) be the sum of the exceptional divisors in the sequence of blowups \(Z'\to \widehat{Z}\). Then \(\mathcal{O}_{Z'}(-E)\) is \(\pi\)-ample; write \(\eta=c_1(\mathcal{O}_{Z'}(-E))\). If \(\omega\) is a Kähler class on \(X\), then \(\pi^{\ast}\omega+c\eta\) is a Kähler class on \(Z'\) for \(c>0\) sufficiently small, say \(0<c\le c_0\).

For the sake of notation, let \(H'^j=\Tpc {a_{Z'}} jM'\) and \(H^j=\Tp {a_{Z}}M\), and let \(\omega\colon H^j\to H^j(1)\) and \(\omega,\eta\colon H'^j\to H'^j(1)\) denote the Lefschetz operators induced by \(\omega,\pi^{\ast}\omega\) and \(\eta\) respectively. Since the actions of \(\pi^{\ast}\omega\) and \(\omega\) agree on the summand \(M\) of \(\Tp {\pi}M'\), this abuse of notation should not lead to any ambiguity.

Choose now a polarization \(S\) of \(M\). This corresponds to a polarization of the generic VHS corresponding to \(M\) under \autoref{thm:hm-structure}, which we can pull back to get a polarization \(S'\) of \(M'\). Abusing notation, write \(S\) and \(S'\) also for the induced pairings on \(H\) and \(H'\). Since \(\Tp {a_{Z'}}M'\) satisfies the conclusions of \autoref{thm:decomp}, the triple
\begin{align}
\left(\bigoplus_j H'^j,c^{-1}\omega +\eta,S'\right)
\end{align}
is a polarized Hodge-Lefschetz structure for any \(0<c\le c_0\) (keeping in mind the abuse of terminology described in \autoref{rmk:lefschetz}), hence \(\bigoplus H'^j\) is a mixed Hodge structure polarized by \(S', C(\omega,c_0^{-1}\omega +\eta)\).

By \autoref{lemma:mhs-polarization-reduction-of-variables}, passing to the associated graded of the weight filtration \(W(\omega)\) yields the polarized bi-\(\mathfrak{sl}_2\)-Hodge structure
\begin{align}
\left(\bigoplus_{i,j} \operatorname{Gr}^{W(\omega)}_i H'^j,\omega, \operatorname{Gr}^{W(\omega)}\eta,S'\right),
\end{align}

Now, since \(M\) is a direct summand of \(\Tp {\pi}M'\) and the action of \(\pi^{\ast}\omega\) on \(\Tp {a_{Z'}}M'\) is compatible with that of \(\omega\) on \(\Tp {a_{X}}M\), \(\bigoplus \operatorname{Gr}_i^{W(\omega)}H^j\) is a direct summand of \(\bigoplus \operatorname{Gr}^{W(\omega)}_i H'^j\). Since \(\Tp {a_{X}}M\) satisfies hard Lefschetz, we have
\begin{align}
W_i(\omega) H^j= \begin{cases}
H^j & j\ge -i, \\
0&\mathrm{else},
\end{cases}\\
\operatorname{Gr}_i^{W(\omega)}\bigg(\bigoplus_jH^j\bigg)=H^{-i}.
\end{align}
Further, \(\operatorname{Gr}^{W(\omega)}\eta\) is just the operator induced by the morphisms \(\eta\colon \Tpc {\pi} j M'\to \Tpc {\pi} {j+2}M'(1)\) by \autoref{thm:decomp} applied to \(\pi\). Looking at supports, we see that this morphism vanishes on \(M\), hence \(H^j\) is in the \(\operatorname{Gr}^{W(\omega)}\eta\)-primitive part of \(\bigoplus_{i,j}\operatorname{Gr}_i^{W(\omega)}H'^j\).

Now \(\Tp {\pi}S'\) and \(S\) agree on the open locus \(U\). Since \(M\) has strict support, a pairing on \(M\) is uniquely determined by its restriction to \(U\), so we get the desired polarization on \(\Tp {a_{X}}M\).
\end{proof}
\subsection{Normal crossings case}
\label{sec:org4fc3d1e}
Let us first prove the decomposition theorem in the case where \(Z\) is smooth and \(D\) is a simple normal crossings divisor. We can then assume that \(Z=X\) for notational simplicity.

In general, \(\Tp {a_{X}}M\) computes the intersection cohomology of the VHS \(V\). Since \(D\) has only simple normal crossings, this can be computed using \(L^2\)-cohomology \autocite{Cattani1987,Kashiwara1987}, and since the Kähler identities (and hence the whole Kähler package) hold in that setting, \(\Tp {a_{X}}M\) satisfies hard Lefschetz \autocite[Theorem 6.4.2]{Kashiwara1987}.

By \autoref{lemma:alteration}, it suffices to prove the decomposition theorem for \(a_{X'+}M'\) where \(\pi\colon X'\to X\) is a composition of a finite morphism and a sequence of blowups in smooth loci, and \(M'\) is the generic pullback of \(M\). By \autoref{lemma:construct-analytic-covers} and since \(V\) has quasi-unipotent local monodromy, we can thus from the start reduce to the case where \(V\) actually has unipotent monodromy around the branches of \(D\), which we will assume going forward.

Let \(\Phi\) be the period map for \(V\) extended maximally on \(X\), with domain \(U\subseteq X\); let \(Y\) be the image of \(\Phi\) and consider \(\Phi\colon U\to Y\). Then \(\Phi\) is proper \autocite[Section 9]{Griffiths1970}. Note that if \(\dim X\le 1\) we are done essentially by the work of Zucker \autocite{Zucker1979} as in \autocite{Sabbah2018}, so assume \(\dim X\ge 2\).

Suppose that \(\Phi\) is constant. This is equivalent to saying that the monodromy of \(V\) preserves the Hodge structure at a reference point, and since the monodromy preserves the polarization of \(V\) as well, the monodromy is unitary. But the monodromy is also defined over the integers, hence every element of the monodromy group has finite order by Kronecker's theorem (stating that an algebraic integer all of whose Galois conjugates have complex modulus \(1\) must be a root of unity). As the monodromy group is a finitely generated linear group, the Jordan-Schur theorem \autocite[Theorem 36.2]{Curtis1962} now implies that it is finite. Hence by \autoref{lemma:construct-analytic-covers}, there is a finite cover on which \(V\) becomes the trivial VHS \(\mathbb{Z}_U\otimes H\) for a fixed integral, \(\mathbb{C}\)-polarizable Hodge structure \(H\). But then the intermediate extension \(M\) of this is the trivial VHS \(\mathbb{Z}_X\otimes H\), considered as a Hodge module, so \(\Tpc {a_{X}} kM=H^{k+\dim X}(X,\mathbb{\mathbb{Z}})\otimes H\) (up to torsion on the level of \(\mathbb{Z}\)-coefficients), and we are done by classical Hodge theory.

If \(\Phi\) is non-constant, then \autoref{cor:VHS-implies-algebraicity} implies that \(X\) admits a meromorphic function. Using \autoref{lemma:alteration}, we can reduce to the situation where \(X\) admits a surjection to \(\mathbb{P}^1\) by resolving the indeterminancy locus, ensuring that \(D\) remains normal crossings. Then \autoref{lemma:P1-factorization} below applies.

\begin{remark}
\label{rmk:projective}
If \(X\) is projective (which in particular happens if \(\Phi\) is immersive at some point, by \autocite[Corollary 1.3]{Brunebarbe2020}), this argument with period maps can be replaced by mapping a blowup of \(X\) to \(\mathbb{P}^1\) by a pencil of hyperplanes, then applying \autoref{lemma:P1-factorization}. Since the discussion above is the only part of our proof that uses the integral structure, we can thus deduce \autoref{thm:decomp} for projective morphisms and \(\mathbb{C}\)-Hodge modules that do not necessarily admit an integral structure.

Note, however, that even in the projective case we have to go through \autoref{lemma:P1-factorization} rather than Saito's original argument, since our Kähler class might not be in the ample cone of \(X\).
\end{remark}

\begin{lemma}
\label{lemma:P1-factorization}
Suppose \(f\colon X\to \mathbb{P}^1\) is a surjection from a compact Kähler manifold, and suppose \autoref{thm:decomp} holds for \(f\). Suppose also that \(M\in \operatorname{HM}_{\mathbb{Z}}(X,n)^p\) and that \(\Tp {a_{X}}M\) satisfies \autoref{thm:decomp}(3). Then \(\Tp {a_{X}}M\) satisfies the conclusions of \autoref{thm:decomp}.
\end{lemma}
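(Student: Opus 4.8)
The plan is to factor the constant map as $a_X = a_{\mathbb{P}^1}\circ f$ and to play off the two commuting Lefschetz operators that then appear. First I would use \autoref{lemma:sign_compatibility_composition} to identify $\Tp {a_X}M$ with $\Tp {a_{\mathbb{P}^1}}\Tp f M$ compatibly with the pushforward pairings. The given class $l$ is Kähler on $X$, hence relatively Kähler along $f$, while $\mathbb{P}^1$ carries its own Kähler class $\omega$. Cup product with $l$ will act as the relative Lefschetz operator for $f$, and cup product with $f^{\ast}\omega$ as the Lefschetz operator pulled back from the base; these two operators commute.

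Next I would feed in the two available instances of the theorem. Since \autoref{thm:decomp} holds for $f$, each $\Tpc f j M$ is a polarizable pure $\mathbb{C}$-Hodge module on $\mathbb{P}^1$ of weight $n+j$, relative hard Lefschetz holds for $l$, and we have the pushforward polarization; moreover relative hard Lefschetz splits $\Tp f M\simeq\bigoplus_j\Tpc f j M[-j]$ by Deligne's criterion. The support of each $\Tpc f j M$ has dimension at most $1<\dim Z$, so the induction hypothesis (the curve case, resting on Zucker's $L^2$-computation) applies to $a_{\mathbb{P}^1}$ acting on $\Tpc f j M$: the groups $\Tpc {a_{\mathbb{P}^1}}i\Tpc f j M$ are polarizable pure Hodge structures of weight $n+i+j$ and $\Tp {a_{\mathbb{P}^1}}\Tpc f j M$ is strict. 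Combining, $\Tp {a_X}M\simeq\bigoplus_j\Tp {a_{\mathbb{P}^1}}\Tpc f j M[-j]$ is strict with $\Tpc {a_X}k M=\bigoplus_{i+j=k}\Tpc {a_{\mathbb{P}^1}}i\Tpc f j M$ pure of weight $n+k$, giving conclusions (1) and (2).

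For the polarization I would assemble, on the bigraded space $\bigoplus_{i,j}\Tpc {a_{\mathbb{P}^1}}i\Tpc f j M$, a bi-$\mathfrak{sl}_2$-Hodge structure of central weight $n$: the relative hard Lefschetz for $f$ supplies the $\mathfrak{sl}_2$-action of $l$ along the index $j$, hard Lefschetz for $a_{\mathbb{P}^1}$ supplies that of $f^{\ast}\omega$ along the index $i$, and since the two gradings are independent the corresponding triples commute (both read, after the regrading of \autoref{rmk:lefschetz}, as the weight-lowering operators of the $\mathfrak{sl}_2$-formalism). The essential and delicate point is that the total pushforward pairing $S_{\mathrm{tot}}$ polarizes this bi-$\mathfrak{sl}_2$-Hodge structure: the adjunction relations and the positivity on biprimitive subspaces must come out with the correct signs, and this is exactly where the sign compatibility of \autoref{lemma:sign_compatibility_composition} — the bookkeeping of the factors $\epsilon$ that motivated the signed pushforward of pairings — ensures that the iterated pushforward polarization matches the direct one. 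I expect verifying this sign matching to be the main obstacle.

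Finally I would descend from the two operators to $l$ alone. By \autoref{lemma:bisl2} the sum $l+f^{\ast}\omega$ defines a polarized $\mathfrak{sl}_2$-Hodge structure on $\bigoplus_k\Tpc {a_X}k M$; equivalently this is a mixed Hodge structure polarized by the cone $\mathrm{C}(l,f^{\ast}\omega)$, so $(\bigoplus_k\Tpc {a_X}k M,F',F'',N,S_{\mathrm{tot}})$ is a polarized Hodge-Lefschetz structure for every $N=\lambda_1 l+\lambda_2 f^{\ast}\omega$ with $\lambda_1,\lambda_2>0$, its weight filtration being the total grading. By the hypothesis that $\Tp {a_X}M$ satisfies \autoref{thm:decomp}(3), the weight filtration $W(l)$ is also the total grading, so $l$ lies on the boundary of the cone without any jump in the weight filtration. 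Letting $\lambda_2\to 0^{+}$ along $l+\lambda_2 f^{\ast}\omega$, the forms $(\alpha,\widebar{\beta})\mapsto\Tpc {a_X}{k,-k}\mathcal S(l^k\alpha,\widebar{\beta})$ on the primitive parts appear as limits of positive definite forms, hence are positive semidefinite, and they are nondegenerate by hard Lefschetz for $l$; therefore they are positive definite. This yields conclusion (4), while conclusion (3) is the hypothesis, completing the proof.
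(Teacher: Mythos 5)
Your setup — factoring through $f$, splitting $\Tp f M$ by Deligne's theorem, applying the curve case to each $\Tpc f i M$, and assembling the polarized bi-$\mathfrak{sl}_2$-Hodge structure generated by the relative Lefschetz operator and the Lefschetz operator $L=f^{\ast}\omega$ pulled back from $\mathbb{P}^1$, with \autoref{lemma:sign_compatibility_composition} handling the signs — matches the paper and correctly delivers conclusions (1) and (2). The gap is in the polarization step. You treat the Kähler class $l$ as acting on the bigraded space $\bigoplus_{i,j}\Tpc{a_{\mathbb{P}^1}}{j}\Tpc f i M$ purely as the bidegree-$(2,0)$ relative Lefschetz operator. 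It does not: $l$ acts through the derived category as a map $\Tp f M\to \Tp f M(1)[2]$, and relative to any splitting of $\Tp f M$ it decomposes into components $\eta_{-m}$ shifting the base degree by $m$. On $\mathbb{P}^1$ one can choose the splitting so that only $\eta_0$ (the relative Lefschetz operator, bidegree $(2,0)$) and $\eta_{-2}$ (bidegree $(0,2)$, acting in the same direction as $L$ but a priori unrelated to it) survive. So $l=\eta_0+\eta_{-2}$ is not bigraded, the pair $(l,f^{\ast}\omega)$ is not a bi-$\mathfrak{sl}_2$-structure, and \autoref{lemma:bisl2} applied to $(\eta_0,L)$ only tells you that $\eta_0+L$ — not $l+f^{\ast}\omega$ — defines a polarized $\mathfrak{sl}_2$-Hodge structure. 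Your assertion that the mixed Hodge structure is polarized by the cone $\mathrm{C}(l,f^{\ast}\omega)$ therefore does not follow; indeed, establishing positivity for the classes $\lambda_1 l+\lambda_2 f^{\ast}\omega$ is essentially equivalent to the statement being proven, so the step is circular.

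The entire second half of the paper's proof exists to close exactly this gap: one shows that the automorphism $L^{-1}\eta_{-2}$ of each $\eta_0$-primitive piece of $H_{-i,-1}$ has only positive real eigenvalues (realness comes from hermitian self-pairings, positivity from hard Lefschetz applied to the perturbed Kähler classes $\eta-\lambda L$), which forces the $(\eta_0,\eta_{-2}+aL)$-biprimitive subspaces to be independent of $a\ge 0$; one then deforms the polarization from $a\gg 0$ (where positivity is inherited, after rescaling, from the known polarization of $(\eta_0,L)$) down to $a=0$ through a connected family of nondegenerate hermitian forms. Your final limiting argument ($\lambda_2\to 0^{+}$ plus nondegeneracy) is the right kind of move, but it is aimed at the wrong family: without first relating $\eta_{-2}$ to $L$ you have no family of polarized structures containing $l$ in its closure to take a limit over.
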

\begin{proof}
Using \autoref{thm:decomp} for \(f\) and Deligne's theorem \autocite{Deligne1968}, take a decomposition \(\Tp fM\simeq\bigoplus_i\Tpc f iM[-i]\), say in the derived category of the underlying \(\Dt_{\mathbb{P}^1}\)-triples. Each summand is a polarizable Hodge module, and while the decomposition is not necessarily compatible with the integral structures, each summand on the right does admit an integral structure. Thus \autoref{thm:decomp} for the constant map on \(\mathbb{P}^1\) applies by the induction on dimension, and we get
\begin{align}
\Tpc {a_{X}} kM\cong\bigoplus_{i+j=k}\Tpc {a_{\mathbb{P}^1}} j\Tpc f iM
\end{align}
As each summand on the right is a polarizable Hodge structure, so is \(\Tpc {a_{X}} kM\).

It remains to check the polarization formula for \(\Tp {a_{X}}M\) with respect to a Kähler class \(\eta\) on \(X\), that is, to check \autoref{thm:decomp}(4). Consider first that \(\eta\) induces a map \(\eta\colon \Tp fM\to \Tp fM(1)[2]\) in the derived category of triples. Given a choice of decomposition of \(\Tp fM\), \(\eta\) decomposes as a sum of morphisms
\begin{align}
\Tpc f i M\to \Tpc f {i-l+2} M(1)[2-l]
\end{align}
in the derived category for various \(i\) and \(l\), i.e. classes in \(\ext^l(\Tpc f iM, \Tpc f {i-l+2}M(1))\). We can collect these classes according to how much they shift the cohomological degree to get elements
\begin{align}
\eta_{-l}\in\bigoplus_i \ext^l(\Tpc f iM,\Tpc f {i-l+2}M(1)).
\end{align}
For notational convenience, write \(H^k=\Tpc {a_{X}} kM\) and \(H_{i,j}=\Tpc {a_{\mathbb{P}^1}} j\Tpc f iM\); then the classes \(\eta_{-l}\) induce operators
\begin{align}
\eta_{-l}\colon H_{i,j}\to H_{i-l+2,j+l}(1).
\end{align}
But as we are on \(\mathbb{P}^1\), the operators \(\eta_{-l}\) vanish for \(l\ge 3\). Further, there's a unique choice of decomposition of \(\Tp fM\) such that \(\eta_{-1}=0\) and \(\eta_0\) and \(\eta_{-2}\) commute \autocite[Proposition 3.5]{Deligne1994}. We will work with this decomposition throughout.

Let \(L\) be the pullback along \(f\) of a Kähler class on \(\mathbb{P}^1\). This acts on the cohomology groups by \(L\colon H_{i,j}\to H_{i, j+2}(1)\). The two non-zero components of \(\eta\) are \(\eta_0\colon H_{i,j}\to H_{i+2,j}(1)\) and \(\eta_{-2}\colon H_{i,j}\to H_{i,j+2}(1)\). Since \(L\) and \(\eta\) commute, we see using the bigrading that \(L\) commutes with \(\eta_0\) and \(\eta_{-2}\) as well.

Now choose a polarization \(S\) of \(M\). Note that \(\eta_0\) acting on \(H_{i,j}\) is induced by the relative Lefschetz operator \(\Tpc f iM\to \Tpc f {i+2}M(1)\) induced by \(\eta\). Hence relative hard Lefschetz for \(\eta\) and hard Lefschetz for \(L\) imply that \(\left(\bigoplus_{i,j}H_{i,j},\eta_0,L\right)\) is a bi-\(\mathfrak{sl}_2\)-Hodge structure (with the Lefschetz operators being the weight-increasing operators), and \autoref{thm:decomp} for \(f\) and for the constant map on \(\mathbb{P}^1\) imply that this structure is polarized by
\begin{align}
S':=\bigoplus_{i,j\ge 0}\Tp {a_{\mathbb{P}^1}}\Tp fS|_{H_{i,j}\otimes H_{-i,-j}}.
\end{align}
Concretely, this means that on the \((\eta_0,L)\)-bi-primitive part of \(H_{-i,-j}\), the pairing
\begin{align}
(x,y)\mapsto S'(\eta_0^iL^jx, \widebar{C_{D}y})
\end{align}
is a positive-definite hermitian form. We need to show that
\begin{align}
S''=\bigoplus_{k\ge 0} \Tp {a_{X}}S|_{H^k\otimes H^{-k}}
\end{align}
polarizes the \(\mathfrak{sl}_2\)-Hodge structure \(\left(\bigoplus_k H^k,\eta_0+\eta_{-2}\right)\). By \autoref{lemma:bisl2}, it suffices to show that \(S''\) polarizes the bi-\(\mathfrak{sl}_2\)-Hodge structure
\begin{align}
\left(\bigoplus_{i,j}H_{i,j},\eta_0,\eta_{-2}\right).\label{eq:goal-sl2}
\end{align}
By \autoref{lemma:sign_compatibility_composition}, \(S'\) and \(S''\) agree on each of the spaces \(H_{i,j}\otimes H_{-i,-j}\), so it suffices to show that \(S'\) polarizes the structure \eqref{eq:goal-sl2}.

Note that \(H_{i,j}\) can only be non-zero for \(j=-1,0,1\), so \(\eta_{-2}\) and \(L\) can only act non-trivially on the spaces \(H_{i,-1}\). On such a space, either operator is an isomorphism \(H_{i,-1}\xrightarrow\sim H_{i,1}(1)\). We will show that the automorphism \(L^{-1}\eta_{-2}\) of \(H_{i,-1}\) only has positive real eigenvalues.

To do this, first recall that \(\eta_{-2}\) and \(L\) commute with \(\eta_0\), hence in particular preserve the \(\eta_0\)-primitive decomposition of \(H_{-i,-1}\), so it suffices to look at the restricted isomorphisms
\begin{align}
P_{\eta_0}H_{-i,-1}\xrightarrow{\eta_{-2},L}P_{\eta_0}H_{-i,1}(1)
\end{align}
and the eigenvalues of \(L^{-1}\eta_{-2}\) on \(P_{\eta_0}H_{-i,-1}\).

Suppose \(\lambda\) is an eigenvalue and \(s\in P_{\eta_0}(H_{-i,-1})_{\mathbb{C}}\) a corresponding eigenvector, such that \(\eta_{-2}s=\lambda Ls\). Note that \(s\) is automatically \(\eta_{-2}\) and \(L\)-primitive. Then
\begin{align}
S'(\eta_0^i\eta_{-2}s, \widebar{C_Ds})=\lambda S'(\eta_0^i Ls, \widebar{C_Ds}).
\end{align}
The former is the self-pairing of \(s\) under a hermitian form, hence a real number, and the latter is a (positive real) norm times \(\lambda\), hence \(\lambda\) is real. Suppose \(\lambda\) is negative. Then replacing \(\eta\) with \(\eta-\lambda L\), which is still a Kähler form on \(X\), we see that \((\eta-\lambda L)^{i+1}s=\eta_0^{i+1}s=0\) since \(s\) is primitive, contradicting hard Lefschetz for \(\eta-\lambda L\).

It follows that for every \(a\ge 0\), the map \(\eta_{-2}+aL\colon H_{-i,-1}\to H_{-i,1}(1)\) is an isomorphism. In particular, the bi-primitive spaces of the bi-\(\mathfrak{sl}_2\)-Hodge structures
\begin{align}
&\left(\bigoplus_{i,j}H_{i,j},\eta_0,L\right)\label{eq:L-bi-sl2}\\
&\left(\bigoplus_{i,j}H_{i,j},\eta_0,\eta_{-2}+aL\right)\label{eq:aL-bi-sl2}
\end{align}
are the same for every \(a\ge 0\).

We now claim that since \(S'\) polarizes the bi-\(\mathfrak{sl}_2\)-Hodge structure \eqref{eq:L-bi-sl2}, \(S'\) also polarizes the structure \eqref{eq:aL-bi-sl2} for \(a\gg 0\). Since the bi-primitive subspaces of the structures are the same for all \(a\ge 0\), the claim is just that the hermitian pairings
\begin{align}
(x,y)\mapsto S'(\eta_0^i (\eta_{-2}+aL)x, \widebar{C_Dy})\label{eq:aL-pairings}
\end{align}
induced by \(S\) on the bi-primitive subspaces are positive definite for sufficiently large \(a\). But up to rescaling by the positive factor \(a^{-1}\), this tends to the positive definite pairing
\begin{align}
(x,y)\mapsto S'(\eta_0^i Lx, \widebar{C_Dy})
\end{align}
in the limit.

The pairings \eqref{eq:aL-pairings} furthermore remain non-degenerate for every \(a\ge 0\) by the hard Lefschetz property, and in a connected 1-dimensional family of non-degenerate hermitian pairings, one of them is positive definite if and only if they all are. Taking \(a=0\), we conclude that \(\left(\bigoplus_{i,j} H_{i,j},\eta_0,\eta_{-2}\right)\) is polarized by \(S'\).
\end{proof}

\begin{remark}
In the proof of \autoref{lemma:P1-factorization}, it was relatively easy to show that \(\Tpc {a_{X}} kM\) is a polarizable Hodge structure. Unfortunately, this does not suffice for the inductive step of the proof of the whole direct image theorem, where we really do need a specific polarization formula as in \autoref{thm:decomp}(4) in order to ensure that the polarization obtained pointwise over the base of a morphism \(f\colon X\to Y\) via nearby and vanishing cycles gives a global polarization on \(Y\).

Besides the hard Lefschetz theorem, establishing the polarization formula is the main difficulty in our proof of \autoref{thm:decomp}.
\end{remark}
\subsection{Reduction to normal crossings case}
\label{sec:org703228e}
By \autoref{lemma:alteration} and \autoref{sec:org4fc3d1e}, applied to a log resolution of \((Z,D)\), it only remains to prove \autoref{thm:decomp}(3) (hard Lefschetz) for \(\Tp {a_{X}}M\) in the general case.

We can apply the reduction argument of \autocite[Section 6.3]{Mochizuki2022} as follows. Take an embedded log resolution \(\pi\colon X'\to X\) of \((Z,D)\). This is a sequence of blowups in smooth (proper) subvarieties of \(Z\) such that the strict transform \(Z'\) of \(Z\) in \(X'\) is smooth, and is \(D'=Z'\cap \pi^{-1}(D)\) is a simple normal crossings divisor in \(Z'\). Let \(M'\) be the generic pullback of \(M\) to \(Z'\), considered as a Hodge module on \(X'\). By the previous section, \(a_{X'+}M'\) satisfies the conclusions of the decomposition theorem, hence \autocite[Section 6.3]{Mochizuki2022} applies to the associated Twistor \(\mathscr D\)-modules, so hard Lefschetz holds for \(\Tp {a_{X}}M\). Note that since we only cite Mochizuki's work for the hard Lefschetz theorem, which is a statement on the level of perverse sheaves, we do not need to check any compatibility between Hodge modules and Twistor \(\mathscr D\)-modules.

One could alternatively use the results of \autocite{Shentu2021} to get the hard Lefschetz theorem.
\subsection{Rational decomposition}
\label{sec:org195833c}
Let us finally show how \autoref{cor:rational-decomp} follows from \autoref{thm:decomp}. If \(F\) is a real field, let \(E=\mathbb{R}\), else let \(E=\mathbb{C}\); note that \(F\) is dense in \(E\) in the Euclidean topology.

Let \(K_{E}=K_{F}\otimes_F E\), and note that \(\mathbf{R}f_{\ast}K_{E}\) admits a decomposition in the derived category by the hard Lefschetz theorem in \autoref{thm:decomp}(3) and Deligne's theorem \autocite{Deligne1968}. The following lemma implies the desired decomposition of \(\mathbf{R}f_{\ast}K_{F}\).
\begin{lemma}
Suppose \(C\) is a rational constructible complex such that \(C_{E}\) is isomorphic to \(\bigoplus_k{}^p\sh H^kC_{E}[-k]\) in the derived category. Then \(C\) is isomorphic to \(\bigoplus_k{}^p\sh H^kC[-k]\).
\end{lemma}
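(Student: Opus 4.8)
The plan is to descend the decomposition one perverse layer at a time, the key mechanism being that extension of scalars \((-)\otimes_F E\) along the field inclusion \(F\hookrightarrow E\) is an exact, perverse \(t\)-exact functor on bounded constructible derived categories: it preserves perverse sheaves (the support and cosupport conditions only involve which stalk cohomology sheaves are nonzero and their support dimensions, which are unchanged by the exact, faithful functor \((-)\otimes_F E\)), hence commutes with the perverse truncations \({}^{p}\tau_{\le a}\) and the perverse cohomology functors \({}^p\sh H^k\). Moreover \((-)\otimes_F E\) is faithful, so the induced map \(\hom_{D^b_c(Y,F)}(A,B)\to\hom_{D^b_c(Y,E)}(A_E,B_E)\) is injective (indeed \(E\) is free over \(F\)). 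These are the compatibilities already recalled for the six-functor formalism in \autoref{sec:org3b1eb0c}.

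First I would set up an induction on the number of nonzero perverse cohomology objects of \(C\). If there is at most one, \(C\) is a shift of a single perverse sheaf and there is nothing to prove. Otherwise let \(a\) be the least integer with \(P^a:={}^p\sh H^aC\neq 0\), write \(P^k={}^p\sh H^kC\), and consider the truncation triangle
\[
P^a[-a]\longrightarrow C\longrightarrow {}^{p}\tau_{>a}C\xrightarrow{\ \delta\ }P^a[-a+1],
\]
whose splitting is equivalent to the vanishing of the connecting morphism \(\delta\in\hom({}^{p}\tau_{>a}C,P^a[-a+1])\).

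Next I would show that \(\delta\) dies after \((-)\otimes_F E\). Applying this exact functor produces the analogous triangle for \(C_E\) with connecting morphism \(\delta_E\) the image of \(\delta\), using that the functor commutes with \({}^{p}\tau_{\le a}\), \({}^{p}\tau_{>a}\) and \({}^p\sh H^a\). From the hypothesis \(C_E\simeq\bigoplus_kP^k_E[-k]\) I obtain a projection \(\mathrm{pr}_a\colon C_E\to P^a_E[-a]\) onto the lowest summand; the composite \(P^a_E[-a]={}^{p}\tau_{\le a}C_E\to C_E\xrightarrow{\mathrm{pr}_a}P^a_E[-a]\) induces an isomorphism on \({}^p\sh H^a\) and is therefore an isomorphism, its source and target being concentrated in perverse degree \(a\). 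This gives a retraction of the canonical map, so the triangle over \(E\) splits and \(\delta_E=0\); by injectivity of base change on \(\hom\)-groups, \(\delta=0\), whence \(C\simeq P^a[-a]\oplus{}^{p}\tau_{>a}C\). Finally I would apply the induction hypothesis to \({}^{p}\tau_{>a}C\): its scalar extension is \({}^{p}\tau_{>a}(C_E)\simeq\bigoplus_{k>a}P^k_E[-k]\), which is decomposed, and it has strictly fewer nonzero perverse cohomologies, so \({}^{p}\tau_{>a}C\simeq\bigoplus_{k>a}P^k[-k]\); combining the two steps yields the claim.

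The main obstacle is not the induction, which is formal, but the two structural inputs feeding it: verifying that \((-)\otimes_F E\) is perverse \(t\)-exact and faithful so that it commutes with the truncation triangles and detects vanishing of morphisms, and observing that an \emph{a priori} abstract \(E\)-linear splitting \(C_E\simeq\bigoplus_kP^k_E[-k]\) forces the \emph{canonical} truncation triangle over \(E\) to split (via the retraction argument above). Once these are in place, the injectivity of base change on \(\hom\) immediately transports the vanishing of each obstruction class from \(E\) back to \(F\).
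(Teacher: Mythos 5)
Your proof is correct, but it takes a genuinely different route from the paper's. The paper does not induct on the perverse amplitude or invoke the connecting morphisms of the truncation triangles; instead, for each \(k\) it considers the linear map \(\hom({}^p\sh H^kC[-k],C)\to\hom({}^p\sh H^kC,{}^p\sh H^kC)\), observes that the isomorphisms form a Zariski open subset of the target (tested stalkwise at one point per stratum of an adapted stratification), notes that by hypothesis this map has an isomorphism in its image after extension of scalars to \(E\), and then uses \(\hom_E=\hom_F\otimes_FE\) together with the density of \(F\) in \(E\) in the Euclidean topology to perturb the \(E\)-linear splitting maps \(f_{E,k}\) into \(F\)-rational ones that still induce automorphisms on \({}^p\sh H^k\). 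This density is exactly why the surrounding text chooses \(E=\mathbb{R}\) or \(E=\mathbb{C}\) according to whether \(F\) is real. Your argument replaces the perturbation step by an obstruction-vanishing argument: the canonical triangle over \(E\) splits by the retraction you construct, so \(\delta_E=0\), and injectivity of the base-change map on \(\hom\)-groups forces \(\delta=0\) over \(F\). The trade-off is that your approach is more general --- it works for an arbitrary field extension \(F\subseteq E\), so the case distinction on whether \(F\) is real becomes unnecessary and one could take \(E=\mathbb{C}\) throughout --- at the cost of an induction on the number of nonzero perverse cohomology objects, whereas the paper's deformation argument produces all the splitting maps \(f_k\) at once but genuinely requires \(F\) dense in \(E\).
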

\begin{proof}
An isomorphism as required is a collection of maps \(f_k\colon{}^p\sh H^kC[-k]\to C\) in the derived category such that \({}^p\sh H^kf_k\) is an automorphism of \({}^p\sh H^kC\) for each \(k\).

Consider, for each \(k\), the linear map
\begin{align}
\hom({}^p\sh H^kC[-k],C)\xrightarrow{{}^p\sh H^k}\hom({}^p\sh H^kC,{}^p\sh H^kC).
\end{align}
Now the isomorphisms in \(\hom({}^p\sh H^kC,{}^p\sh H^kC)\) form a Zariski open subset: To check whether an endomorphism \(\phi\) is an isomorphism, it suffices to take a point \(x_i\) in each stratum of a stratification adapted to \({}^p\sh H^kC\), and test whether \(\phi_{x_i}\) is an isomorphism for each \(i\), giving finitely many open conditions on \(\phi\).

From the hypotheses of the lemma it follows that
\begin{align}
\hom({}^p\sh H^kC_{E}[-k],C_{E})\xrightarrow{{}^p\sh H^k}\hom({}^p\sh H^kC_{E},{}^p\sh H^kC_{E})
\end{align}
has an isomorphism in its image. By deforming slightly the real coefficients of a map \(f_{E,k}\colon {}^p\sh H^kC_{E}[-k]\to C_{E}\) that induces an automorphism of \({}^p\sh H^kC_{E}\), we get a rational \(f_k\) as desired since \(\hom({}^p\sh H^kC_{E}[-k],C_{E})=\hom({}^p\sh H^kC[-k],C)\otimes_{F}E\).
\end{proof}
\section{Kollár's conjecture}
\label{sec:org9992a8d}
Kollár \autocite{Kollar1986a} conjectured that to any VHS \(V\) defined on a smooth Zariski open subset of a projective variety \(X\), there should exist a coherent sheaf \(S_X(V)\) on \(X\) satisfying a decomposition theorem and certain Kodaira-type vanishing theorems. In particular, we should have \(S_X(\mathbb{Z}_X)=\omega_X\), and if \(V\) is defined on the complement of a normal crossings divisor, then \(S_X(V)\) should be the lowest piece of the extension of the Hodge filtration on \(V\) to Deligne's canonical extension of \(V\) to a vector bundle with logarithmic connection on \(X\).

The construction and decomposition theorem for the sheaves \(S_X(V)\) follow from \autoref{thm:decomp} as in \autocite[Theorem 3.2]{Saito1991}, as follows. Given \(V\) a generically defined polarizable VHS of weight \(n\) which admits an integral structure, let \(M\in \operatorname{HM}_{\mathbb{Z}}(X,n+\dim X)^p\) be the corresponding Hodge module. Then we define \(S_X(V)=F^{p(M)}M\) where \(p(M)\) is the largest integer \(p\) such that \(F^pM\ne 0\); note that \(F^pM\) is a coherent sheaf on \(X\).

\begin{theorem}
\label{thm:kollar}
Given a surjective proper Kähler morphism \(f\colon X\to Y\) and a generically defined polarizable VHS \(V\) of weight \(n\) on \(X\) which admits an integral structure, we get a decomposition in \(D^b \operatorname{Coh}(Y)\)
\begin{align}
\mathbf{R}f_{\ast}S_X(V)=\bigoplus_{p(V^i)=p(V)}S_Y(V^i)[-i]
\end{align}
where \(H^i\) is the direct image \(\mathbf{R}f_{\ast}^iV\) restricted to a Zariski open subset of \(Y\) over which \(f\) is a submersion, hence \(H^i\) is a VHS of weight \(i+n\).

In particular, \(\mathbf{R}f_{\ast}^iS_X(V)=S_Y(V^i)\) if \(p(V^i)=p(V)\) and \(0\) otherwise; and \(\mathbf{R}f_{\ast}^iS_X(V)=0\) if \(i\) is strictly greater than the dimension of a general fibre of \(f\).
\end{theorem}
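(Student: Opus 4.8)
The plan is to realise $S_X(V)=F^{p(M)}M$ as the bottom graded piece of the relative de Rham complex of the underlying filtered $\mathscr{D}$-module and then push the decomposition of \autoref{thm:decomp} through this identification. Write $p_0=p(M)$. Filtering the relative Spencer complex $\Sptrxy(M)=M\otimes_{\mathcal{O}_X}\wedge^{\bullet}\Theta_{X/Y}\otimes\mathscr{D}_{X\to Y}$ so that $F^q$ of the term $M\otimes\wedge^k\Theta_{X/Y}$ is $F^{q+k}M\otimes\wedge^k\Theta_{X/Y}$, one checks that $\operatorname{gr}_F^q\Sptrxy(M)=0$ for $q>p_0$, while $\operatorname{gr}_F^{p_0}\Sptrxy(M)$ collapses onto its single term in Spencer degree $0$: the higher terms $\operatorname{gr}_F^{p_0+k}M\otimes\wedge^k\Theta_{X/Y}$ vanish for $k\ge 1$ because $F^{p_0+1}M=0$. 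Thus $S_X(V)=\operatorname{gr}_F^{p_0}\Sptrxy(M)$, concentrated in a single cohomological degree.

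First I would commute this bottom graded piece past $\mathbf{R}f_{\ast}$. By \autoref{thm:decomp}(1) the filtered pushforward $\Tp f M$ is strict, and strictness is precisely the statement that forming $\operatorname{gr}_F$ commutes with taking the cohomology of the filtered direct image; applied to the bottom level this gives $\mathbf{R}f_{\ast}^iS_X(V)=\operatorname{gr}_F^{p_0}\Tpc f i M$. Moreover, by \autoref{thm:decomp}(2) and Deligne's splitting criterion \autocite{Deligne1968}, $\Tp f M$ decomposes as $\bigoplus_i\Tpc f i M[-i]$, and---again by strictness---this decomposition is filtered, so that $\mathbf{R}f_{\ast}S_X(V)\simeq\bigoplus_i\operatorname{gr}_F^{p_0}(\Tpc f i M)[-i]$ in $D^b\operatorname{Coh}(Y)$.

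Next I would identify the surviving summands. Pushing forward the vanishing $\operatorname{gr}_F^q\Sptrxy(M)=0$ for $q>p_0$ and invoking strictness shows $F^q\Tpc f i M=0$ for $q>p_0$, i.e. $p(V^i)\le p(V)$ for all $i$, where $V^i$ denotes the generic direct-image variation carried by $\Tpc f i M$ via \autoref{thm:hm-structure}. When $p(V^i)=p(V)$ the piece $\operatorname{gr}_F^{p_0}\Tpc f i M=F^{p_0}\Tpc f i M$ is by definition $S_Y(V^i)$, while when $p(V^i)<p(V)$ one has $F^{p_0}\Tpc f i M=0$ and the piece drops out; this is exactly the indexing $\bigoplus_{p(V^i)=p(V)}S_Y(V^i)[-i]$. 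The two ``in particular'' assertions then read off at once: each surviving graded piece sits in a single cohomological degree, so $\mathbf{R}f_{\ast}^iS_X(V)$ is $S_Y(V^i)$ or $0$ according to the dichotomy, and it vanishes once $i$ exceeds the dimension of a general fibre, since there $\Tpc f i M=0$ by the relative hard Lefschetz symmetry of \autoref{thm:decomp}(3) (equivalently, because $S_X(V)$ is coherent).

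The main obstacle I expect is the strictness input rather than the homological bookkeeping: everything hinges on the decomposition $\Tp f M\simeq\bigoplus_i\Tpc f i M[-i]$ being genuinely filtered, so that $\operatorname{gr}_F^{p_0}$ distributes over the summands, and this is exactly what \autoref{thm:decomp}(1) provides and the only ingredient beyond the formal behaviour of the bottom de Rham piece. A secondary delicate point is the strict-support decomposition of $\Tpc f i M$: one must check that summands supported on proper subvarieties of $Y$ do not perturb the bottom graded piece, so that $\operatorname{gr}_F^{p_0}\Tpc f i M$ really is the Koll\'ar sheaf $S_Y(V^i)$ of the generic variation $V^i$; this is handled exactly as in \autocite[Theorem 3.2]{Saito1991}.
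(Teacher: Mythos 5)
Your proposal follows essentially the same route as the paper, which simply observes that Saito's argument from \autocite[Theorem 3.2]{Saito1991} goes through once \autoref{thm:decomp} is available: identify \(S_X(V)\) with the extreme graded piece of the filtered Spencer/de Rham complex, use strictness of \(\Tp f M\) to commute \(\operatorname{gr}_F^{p_0}\) past the direct image, split via Deligne, and discard the summands with smaller \(p\). The point you label ``secondary'' --- that a summand \(N\) of \(\Tpc f iM\) with strict support in a proper subvariety of \(Y\) satisfies \(p(N)<p(M)\), i.e.\ \autocite[Proposition 2.6]{Saito1991} --- is in fact the one ingredient the paper singles out as essential beyond the decomposition theorem; note that it is needed in \emph{both} branches of your dichotomy, since in either case \(F^{p_0}\Tpc f iM\) must be identified with \(F^{p_0}\) of the strict-support-\(Y\) summand alone.

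One local error: your justification of the final vanishing statement does not work. It is not true that \(\Tpc f iM=0\) for \(i\) greater than the dimension \(d\) of a general fibre (take \(f\) the blow-up of a point in \(\mathbb{P}^3\) and \(M\) the constant Hodge module: \({}^pR^1f_{\ast}\mathbb{Q}_X[3]\ne 0\) while \(d=0\)); nor does coherence of \(S_X(V)\) give vanishing of \(\mathbf{R}^if_{\ast}\) beyond the \emph{maximal} fibre dimension, which can exceed \(d\). What is true is that for \(i>d\) the summand of \(\Tpc f iM\) with strict support \(Y\) vanishes, because its generic VHS \(V^i\) is zero, so only summands supported on proper subvarieties remain, and these contribute nothing to the bottom Hodge piece by the same \autocite[Proposition 2.6]{Saito1991}. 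The conclusion therefore stands, but via the strict-support lemma rather than the reasons you give.
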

\begin{proof}
Given \autoref{thm:decomp}, Saito's argument in \autocite{Saito1991} goes through in the Kähler case, see \autocite[Remark 2.7]{Saito1991}.

The key point, besides the decomposition theorem, is that if \(M\) is the Hodge module associated to \(V\) with strict support on \(X\), and if \(N\) is a summand of \(\Tp f^iM\) with strict support in a proper subvariety of \(Y\), then \(p(N)<p(M)\) \autocite[Proposition 2.6]{Saito1991}, hence only the summands of \(\Tpc f iM\) with strict support \(Y\) can contribute.
\end{proof}

The constant case \(V=\mathbb{Z}_X\) follows also from \autocite{Saito2022}, and the general case (for arbitrary polarizable VHS) from \autocite{Mochizuki2022}. There is also a more directly analytic approach to the theorem, and an extension to the twistor case, by Shentu and Zhao \autocite{Shentu2021b,Shentu2021a}.

\printbibliography

@article{Bakker2023,
  author = 	"Bakker, Benjamin
		and Brunebarbe, Yohan
		and Tsimerman, Jacob",
  title = 	"o-minimal GAGA and a conjecture of Griffiths",
  journal = 	"Inventiones mathematicae",
  year = 	2023,
  month = 	04,
  day = 	01,
  volume = 	232,
  number = 	1,
  pages = 	"163--228",
  abstract = 	"We prove a conjecture of Griffiths on the quasi-projectivity of images of period maps using algebraization results arising from o-minimal geometry. Specifically, we first develop a theory of analytic spaces and coherent sheaves that are definable with respect to a given o-minimal structure, and prove a GAGA-type theorem algebraizing definable coherent sheaves on complex algebraic spaces. We then combine this with algebraization theorems of Artin to show that proper definable images of complex algebraic spaces are algebraic. Applying this to period maps, we conclude that the images of period maps are quasi-projective and that the restriction of the Griffiths bundle is ample.",
  issn = 	"1432-1297",
  doi = 	"10.1007/s00222-022-01166-1",
}

@incollection{Beilinson1982,
  author = {Beilinson, A. A. and Bernstein, J. and Deligne, P.},
  title = {Faisceaux pervers},
  booktitle = {Analysis and topology on singular spaces, {I} ({L}uminy, 1981)},
  year = {1982},
  volume = {100},
  series = {Ast\'{e}risque},
  publisher = {Soc. Math. France, Paris},
  pages = {5--171},
  mrclass = {32C38},
  mrnumber = {751966},
  mrreviewer = {Zoghman Mebkhout},
}

@article{Brunebarbe2020,
  author = {Brunebarbe, Yohan and Cadorel, Beno\^{i}t},
  title = {Hyperbolicity of varieties supporting a variation of {H}odge structure},
  journal = {Int. Math. Res. Not. IMRN},
  year = {2020},
  number = {6},
  pages = {1601--1609},
  issn = {1073-7928},
  doi = {10.1093/imrn/rny054},
  fjournal = {International Mathematics Research Notices. IMRN},
  mrclass = {32G20 (14D07)},
  mrnumber = {4089428},
  mrreviewer = {Fumio Hazama},
}

@article{Cattani1986,
  author = {Cattani, Eduardo and Kaplan, Aroldo and Schmid, Wilfried},
  title = {Degeneration of {H}odge structures},
  journal = {Ann. of Math. (2)},
  year = {1986},
  volume = {123},
  number = {3},
  pages = {457--535},
  issn = {0003-486X},
  doi = {10.2307/1971333},
  fjournal = {Annals of Mathematics. Second Series},
  mrclass = {32G20 (14C30 14D05 32J25)},
  mrnumber = {840721},
  mrreviewer = {Masaki Kashiwara},
}

@article{Cattani1987,
  author = {Cattani, E. and Kaplan, A. and Schmid, W.},
  title = {L2 and intersection cohomologies for a polarizable Variation of Hodge structure.},
  journal = {Inventiones mathematicae},
  year = {1987},
  volume = {87},
  pages = {217-252},
  keywords = {-cohomology groups; canonical Hodge structures; intersection cohomology groups},
}

@article{Deligne1968,
  author = {Deligne, Pierre},
  title = {Th\'eor\`eme de Lefschetz et crit\`eres de d\'eg\'en\'erescence de suites spectrales},
  journal = {Publications Math\'ematiques de l'IH\'ES},
  year = {1968},
  language = {fr},
  volume = {35},
  pages = {107-126},
  publisher = {Institut des Hautes \'Etudes Scientifiques},
  zbl = {0159.22501},
}

@article{Deligne1971,
  author = {Pierre {Deligne}},
  title = {{Th\'eorie de Hodge. II.}},
  journal = {{Publ. Math., Inst. Hautes \'Etud. Sci.}},
  year = {1971},
  language = {French},
  volume = {40},
  pages = {5--57},
  issn = {0073-8301; 1618-1913/e},
  doi = {10.1007/BF02684692},
  fjournal = {{Publications Math\'ematiques}},
  msc2010 = {14C30 32J25},
  publisher = {Springer, Berlin/Heidelberg; Institut des Hautes \'Etudes Scientifiques, Bures-sur-Yvette},
  zbl = {0219.14007},
}

@incollection{Deligne1994,
  author = {Deligne, Pierre},
  title = {D\'{e}compositions dans la cat\'{e}gorie d\'{e}riv\'{e}e},
  booktitle = {Motives ({S}eattle, {WA}, 1991)},
  year = {1994},
  volume = {55},
  series = {Proc. Sympos. Pure Math.},
  publisher = {Amer. Math. Soc., Providence, RI},
  pages = {115--128},
  doi = {10.1090/pspum/055.1/1265526},
  mrclass = {18E30 (14F20)},
  mrnumber = {1265526},
  mrreviewer = {Jeremy Rickard},
}

@article{Fujiki1978,
  author = {Fujiki, Akira},
  title = {On automorphism groups of compact {K}\"{a}hler manifolds},
  journal = {Invent. Math.},
  year = {1978},
  volume = {44},
  number = {3},
  pages = {225--258},
  issn = {0020-9910},
  doi = {10.1007/BF01403162},
  fjournal = {Inventiones Mathematicae},
  mrclass = {32M05},
  mrnumber = {481142},
  mrreviewer = {W. Barth},
}

@article{Fujiki1978/79,
  author = {Fujiki, Akira},
  title = {Closedness of the {D}ouady spaces of compact {K}\"{a}hler spaces},
  journal = {Publ. Res. Inst. Math. Sci.},
  year = {1978},
  volume = {14},
  number = {1},
  pages = {1--52},
  issn = {0034-5318},
  doi = {10.2977/prims/1195189279},
  fjournal = {Kyoto University. Research Institute for Mathematical Sciences. Publications},
  mrclass = {32G13},
  mrnumber = {0486648},
  mrreviewer = {H. Kerner},
}

@article{Fujiki1984,
  author = {Fujiki, Akira},
  title = {On the {D}ouady space of a compact complex space in the category {$\mathscr C$}. {II}},
  journal = {Publ. Res. Inst. Math. Sci.},
  year = {1984},
  volume = {20},
  number = {3},
  pages = {461--489},
  issn = {0034-5318},
  doi = {10.2977/prims/1195181408},
  fjournal = {Kyoto University. Research Institute for Mathematical Sciences. Publications},
  mrclass = {32G13 (32G10)},
  mrnumber = {759679},
  mrreviewer = {Georg Schumacher},
}

@book{Grauert1984,
  author = {Grauert, Hans and Remmert, Reinhold},
  title = {Coherent analytic sheaves},
  year = {1984},
  volume = {265},
  series = {Grund\-lehren der mathematischen Wissenschaften [Fundamental Principles of Mathematical Sciences]},
  publisher = {Springer-Verlag, Berlin},
  isbn = {3-540-13178-7},
  pages = {xviii+249},
  doi = {10.1007/978-3-642-69582-7},
  mrclass = {32-02 (32Bxx 32C30)},
  mrnumber = {755331},
  mrreviewer = {Daniel Barlet},
}

@article{Griffiths1970,
  author = {Griffiths, Phillip A.},
  title = {Periods of integrals on algebraic manifolds. {III}. {S}ome global differential-geometric properties of the period mapping},
  journal = {Inst. Hautes \'{E}tudes Sci. Publ. Math.},
  year = {1970},
  number = {38},
  pages = {125--180},
  issn = {0073-8301},
  fjournal = {Institut des Hautes \'{E}tudes Scientifiques. Publications Math\'{e}matiques},
  mrclass = {14.55},
  mrnumber = {282990},
  mrreviewer = {A. H. Wallace},
}

@article{Hironaka1975,
  author = {Hironaka, Heisuke},
  title = {Flattening theorem in complex-analytic geometry},
  journal = {Amer. J. Math.},
  year = {1975},
  volume = {97},
  pages = {503--547},
  issn = {0002-9327},
  doi = {10.2307/2373721},
  fjournal = {American Journal of Mathematics},
  mrclass = {32C45},
  mrnumber = {393556},
  mrreviewer = {Margherita Galbiati},
}

@article{Kashiwara1986,
  author = {Kashiwara, Masaki and Kawai, Takahiro},
  title = {Hodge structure and holonomic systems},
  journal = {Proc. Japan Acad. Ser. A Math. Sci.},
  year = {1986},
  volume = {62},
  number = {1},
  pages = {1--4},
  issn = {0386-2194},
  fjournal = {Japan Academy. Proceedings. Series A. Mathematical Sciences},
  mrclass = {32G20 (14C30)},
  mrnumber = {839792},
}

@article{Kashiwara1987,
  author = {Kashiwara, Masaki and Kawai, Takahiro},
  title = {The {P}oincar\'{e} lemma for variations of polarized {H}odge structure},
  journal = {Publ. Res. Inst. Math. Sci.},
  year = {1987},
  volume = {23},
  number = {2},
  pages = {345--407},
  issn = {0034-5318},
  doi = {10.2977/prims/1195176545},
  fjournal = {Kyoto University. Research Institute for Mathematical Sciences. Publications},
  mrclass = {32G20 (32C38 32C40)},
  mrnumber = {890924},
  mrreviewer = {M. Muro},
}

@article{Kollar1986a,
  author = {Koll\'{a}r, J\'{a}nos},
  title = {Higher direct images of dualizing sheaves. {II}},
  journal = {Ann. of Math. (2)},
  year = {1986},
  volume = {124},
  number = {1},
  pages = {171--202},
  issn = {0003-486X},
  doi = {10.2307/1971390},
  fjournal = {Annals of Mathematics. Second Series},
  mrclass = {14F10 (14C30 14D99 32C35)},
  mrnumber = {847955},
  mrreviewer = {Takao Fujita},
}

@misc{Mochizuki2022,
  author = {Takuro Mochizuki},
  title = {$L^2$-complexes and twistor complexes of tame harmonic bundles},
  date = {2022-04-22},
  eprint = {2204.10443v1},
  eprintclass = {math.CV},
  eprinttype = {arXiv},
  abstract = {Let $f:X\to Y$ be a morphism of complex manifolds. Suppose that $X$ is a K\"ahler manifold. Let $(\mathcal{T},\mathcal{S})$ be a regular polarized pure twistor $\mathcal{D}$-module of weight $w$ on $X$ whose support is proper over $Y$. We prove the Hard Lefschetz Theorem for the push-forward of $(\mathcal{T},\mathcal{S})$ by $f$. As one of the key steps, we obtain the twistor version of a theorem of Kashiwara and Kawai about the Hodge structure on the intersection complex of polarized variation of Hodge structure.},
  keywords = {math.CV, math.AG, math.DG, 53C07, 14D07, 32D20, 32C38, 14F10, 14F43},
}

@incollection{Peternell1994,
  author = {Peternell, Th.},
  title = {Modifications},
  booktitle = {Several complex variables, {VII}},
  year = {1994},
  volume = {74},
  series = {Encyclopaedia Math. Sci.},
  publisher = {Springer, Berlin},
  pages = {285--317},
  doi = {10.1007/978-3-662-09873-8\_8},
  mrclass = {32S45 (32J20)},
  mrnumber = {1326624},
}

@article{Saito1988,
  author = {Morihiko Saito},
  title = {Modules de Hodge polarisables},
  journal = {Publications of the Research Institute for Mathematical Sciences},
  year = {1988},
  volume = {24},
  number = {6},
  pages = {849--995},
  doi = {10.2977/prims/1195173930},
  publisher = {European Mathematical Society Publishing House},
}

@article{Saito1990a,
  author = {Saito, Morihiko},
  title = {Decomposition theorem for proper {K}\"{a}hler morphisms},
  journal = {Tohoku Math. J. (2)},
  year = {1990},
  volume = {42},
  number = {2},
  pages = {127--147},
  issn = {0040-8735},
  doi = {10.2748/tmj/1178227650},
  fjournal = {The Tohoku Mathematical Journal. Second Series},
  mrclass = {32S35 (32C38 32J25)},
  mrnumber = {1053945},
  mrreviewer = {V. S. Retakh},
}

@incollection{Saito1991,
  author = {Saito, Morihiko},
  title = {On {K}oll\'{a}r's conjecture},
  booktitle = {Several complex variables and complex geometry, {P}art 2 ({S}anta {C}ruz, {CA}, 1989)},
  year = {1991},
  volume = {52},
  series = {Proc. Sympos. Pure Math.},
  publisher = {Amer. Math. Soc., Providence, RI},
  pages = {509--517},
  mrclass = {14D07 (14C30 14F17 32L20)},
  mrnumber = {1128566},
  mrreviewer = {H\'{e}l\`ene Esnault},
}

@incollection{Saito2016,
  author = {Saito, Morihiko},
  title = {A young person's guide to mixed {H}odge modules},
  booktitle = {Hodge theory and {$L^2$}-analysis},
  year = {2017},
  volume = {39},
  series = {Adv. Lect. Math. (ALM)},
  publisher = {Int. Press, Somerville, MA},
  pages = {517--553},
  mrclass = {14D07 (14D20 14H10)},
  mrnumber = {3751301},
  mrreviewer = {Ronald A. Z\'{u}\~{n}iga-Rojas},
}

@misc{Saito2022,
  author = {Morihiko Saito},
  title = {Some remarks on decomposition theorem for proper Kähler morphisms},
  date = {2022-04-19},
  eprint = {2204.09026v5},
  eprintclass = {math.AG},
  eprinttype = {arXiv},
  abstract = {We explain a correct proof of the decomposition theorem for direct images of constant Hodge modules by proper K\"ahler morphisms of complex manifolds. We also give some examples showing certain difficulty in the non-constant Hodge module case.},
  keywords = {math.AG},
}

@incollection{Schnell2015a,
  author = {Schnell, C.},
  title = {Torsion points on cohomology support loci: from {$\mathcal D$}-modules to {S}impson's theorem},
  booktitle = {Recent advances in algebraic geometry},
  year = {2015},
  volume = {417},
  series = {London Math. Soc. Lecture Note Ser.},
  publisher = {Cambridge Univ. Press, Cambridge},
  pages = {405--421},
  mrclass = {14F10 (14K05)},
  mrnumber = {3380458},
  mrreviewer = {Daniel Bertrand},
}

@online{Sabbah2018,
  shorthand = {MHM},
  author = {Claude Sabbah and Christian Schnell},
  title = {The MHM Project},
  year = {2018},
  url = {http://www.cmls.polytechnique.fr/perso/sabbah.claude/MHMProject/mhm.html},
}

@article{Sabbah2022,
  author =       {Sabbah, Claude and Schnell, Christian},
  title =        {Degenerating Complex Variations of Hodge Structure in
                  Dimension One},
  journal =      {CoRR},
  year =         2022,
  abstract =     {We analyze the behavior of polarized complex variations of
                  Hodge structure on the punctured unit disk. For integral
                  variations of Hodge structure, this analysis was first carried
                  out by Wilfried Schmid. We get rid of the assumption that the
                  eigenvalues of the monodromy transformation are roots of
                  unity. In this generality, we give new (and, we think, more
                  conceptual) proofs for all the major results in Schmid's
                  paper, such as the estimates for the rate of growth of the
                  Hodge norm; the existence of a limiting mixed Hodge structure;
                  the nilpotent orbit theorem; and a simplified (but still
                  sufficiently powerful) version of the SL(2)-orbit theorem.},
  archivePrefix ={arXiv},
  eprint =       {2206.08166},
  primaryClass = {math.AG},
  }

@Misc{Shentu2021,
  author      = {Junchao Shentu and Chen Zhao},
  title       = {$L^2$-representation of Hodge Modules},
  date        = {2021-03-06},
  eprint      = {2103.04030v1},
  eprintclass = {math.AG},
  eprinttype  = {arXiv},
  abstract    = {Over an arbitrary compact complex space or an arbitrary germ of complex space $X$, we provide fine resolutions of pure Hodge modules with strict supports $IC_X(\mathbb{V})$ via differential forms with locally $L^2$ boundary conditions. When $\mathbb{V}=\mathbb{C}_{X_{\rm reg}}$ is the trivial variation of Hodge structure, we give a solution to a Cheeger-Goresky-MacPherson type conjecture: For any compact complex space $X$, there is a complete hermitian metric $ds^2$ on $X_{\rm reg}$ such that there is a canonical isomorphism $$H^i_{(2)}(X_{\rm reg},ds^2)\simeq IH^i(X),\quad \forall i.$$ Such metric $ds^2$ could be K\"ahler if $X$ is a K\"ahler space. As an application, we give a differential geometrical proof of the K\"ahler package of the hypercohomology of pure Hodge modules. We also prove the K\"ahler version of Kashiwara's conjecture in the absolute case.},
  keywords    = {math.AG},
}

@article{Shentu2021a,
  abstract        = {We give a new proof of Koll{\'a}r's conjecture on the
                  derived pushforward of the dualizing sheaf twisted by a
                  variation of Hodge structure. This conjecture has been settled
                  by M. Saito via the theory of mixed Hodge modules and has
                  various applications in the investigation of Albanese maps.
                  Our technique is {\$}{\$}L\^{}2{\$}{\$}theoretic which gives
                  concrete constructions and proofs to the conjecture. The
                  {\$}{\$}L\^{}2{\$}{\$}point of view allows us to generalize
                  Koll{\'a}r's conjecture to the context of non-abelian Hodge
                  theory.},
  author          = {Junchao Shentu and Chen Zhao},
  doi             = {10.1007/s00208-022-02432-6},
  journal         = {Mathematische Annalen},
  keywords        = {math.AG},
  number          = 3,
  pages           = {1429--1462},
  title           = {$L^2$-Extension of Adjoint bundles and Koll{\'a}r's
                  Conjecture},
  volume          = 386,
  year            = 2023,
}

@article{Shentu2021b,
  author = {Shentu, Junchao and Zhao, Chen},
  title = {$L^2$-Dolbeault Resolution of the Lowest Hodge Piece of a Hodge Module and an Application To the Relative Fujita Conjecture},
  year = {2021},
  abstract = {Let $X$ be a complex space and $M$ a pure Hodge module with strict support $X$. We introduce a kind of coherent subsheaf $S(M,\varphi)$ of M. Saito's $S(M)$ which is a combination of $S(M)$ and the multiplier ideal sheaf $\mathscr{I}(\varphi)$. An $L^2$-resolution of $S(M,\varphi)$ is constructed. This generalizes MacPherson's conjecture on the $L^2$-representation of the Grauert-Riemenschneider sheaf. Various vanishing theorems for $S(M)$ (Saito's vanishing, Kawamata-Viehweg vanishing and some new ones like Nadel vanishing, partial vanishing) are proved via standard differential geometric arguments. Some applications on the relative version of Fujita's conjecture are presented.},
  archivePrefix = {arXiv},
  eprint = {2104.04905},
  primaryClass = {math.AG},
}

@article{Sommese1978,
  author = {Sommese, Andrew John},
  title = {On the rationality of the period mapping},
  journal = {Ann. Scuola Norm. Sup. Pisa Cl. Sci. (4)},
  year = {1978},
  volume = {5},
  number = {4},
  pages = {683--717},
  issn = {0391-173X},
  fjournal = {Annali della Scuola Normale Superiore di Pisa. Classe di Scienze. Serie IV},
  mrclass = {14C30 (14D20 32G20)},
  mrnumber = {519890},
  mrreviewer = {C. A. M. Peters},
}

@incollection{Varouchas1986,
  author = {Varouchas, J.},
  title = {Sur l'image d'une vari\'{e}t\'{e} k\"{a}hl\'{e}rienne compacte},
  booktitle = {Fonctions de plusieurs variables complexes, {V} ({P}aris, 1979--1985)},
  year = {1986},
  volume = {1188},
  series = {Lecture Notes in Math.},
  publisher = {Springer, Berlin},
  pages = {245--259},
  doi = {10.1007/BFb0076826},
  mrclass = {32J99 (32C15 53C55)},
  mrnumber = {926290},
  mrreviewer = {Ciprian Borcea},
}

@article{Varouchas1989,
  author = {Varouchas, Jean},
  title = {K\"{a}hler spaces and proper open morphisms},
  journal = {Math. Ann.},
  year = {1989},
  volume = {283},
  number = {1},
  pages = {13--52},
  issn = {0025-5831},
  doi = {10.1007/BF01457500},
  fjournal = {Mathematische Annalen},
  mrclass = {32C15 (32F05 32G10 32H35)},
  mrnumber = {973802},
  mrreviewer = {Siegmund Kosarew},
}

@article{Zucker1979,
  author = {Zucker, Steven},
  title = {Hodge theory with degenerating coefficients. {$L_{2}$} cohomology in the {P}oincar\'{e} metric},
  journal = {Ann. of Math. (2)},
  year = {1979},
  volume = {109},
  number = {3},
  pages = {415--476},
  issn = {0003-486X},
  doi = {10.2307/1971221},
  fjournal = {Annals of Mathematics. Second Series},
  mrclass = {14C30 (14D05 14D20 32G20)},
  mrnumber = {534758},
  mrreviewer = {Phillip Griffiths},
}

@Book{Curtis1962,
  author     = {Curtis, Charles W. and Reiner, Irving},
  title      = {Representation theory of finite groups and associative algebras},
  pages      = {xiv+685},
  publisher  = {Interscience Publishers (a division of John Wiley \& Sons, Inc.), New York-London},
  series     = {Pure and Applied Mathematics},
  volume     = {Vol. XI},
  mrclass    = {20.80},
  mrnumber   = {144979},
  mrreviewer = {W.\ E.\ Jenner},
  year       = {1962},
}

@BOOK{SGA1,
    AUTHOR = "Grothendieck, Alexander",
    TITLE = "Rev\^etements \'etales et groupe fondamental (SGA 1)",
    PUBLISHER = "Springer-Verlag",
    YEAR = "1971",
    SERIES = "Lecture notes in mathematics",
    VOLUME = "224"
}
\end{document}